\newdimen\AAdi%
\newbox\AAbo%
\def\AAk#1#2{\s_etbox\AAbo=\hbox{#2}\AAdi=\wd\AAbo\kern#1\AAdi{}}%
\def\AAr#1#2#3{\s_etbox\AAbo=\hbox{#2}\AAdi=\ht\AAbo\raise#1\AAdi\hbox{#3}}%
\font\tenmsb=msbm10 at 11pt \font\sevenmsb=msbm7 at 8pt
\font\fivemsb=msbm5 at 6pt
\renewcommand{\theequation}{\thesection\arabic{equation}}
\font\bbbld=msbm10 scaled\magstephalf
\newcommand{\ba}{\begin{array}}
\newcommand{\ea}{\end{array}}
\newcommand{\bfR}{\hbox{\bbbld R}}
\newcommand{\bfS}{\hbox{\bbbld S}}
\def\aint{\frac{\ \ }{\ \ }{\hskip -0.4cm}\int}
\newtheorem{theorem}{Theorem}[section]
\newtheorem{lemma}[theorem]{Lemma}
\newtheorem{proposition}[theorem]{Proposition}
\newtheorem{corollary}[theorem]{Corollary}
\theoremstyle{definition}
\theoremstyle{remark}
\newtheorem{remark}[theorem]{Remark}
\begin{document}
\setlength{\baselineskip}{1.2\baselineskip}

\title[Gauss curvature flow]{\bf Entropy and a convergence theorem for Gauss curvature flow in high dimension}

\author{Pengfei Guan}
\address{Department of Mathematics and Statistics\\
        McGill University\\
        Montreal, Quebec, H3A 2K6, Canada.}
\email{guan@math.mcgill.ca}
\author{Lei Ni}
\address{Department of Mathematics, University of California at San Diego, La Jolla, CA 92093, USA}
\email{lni@math.ucsd.edu}
\thanks{The research of the first author is partially supported by an NSERC Discovery Grant, the research of the second author is partially supported by NSF grant DMS-1105549.}
\begin{abstract} In this paper we prove uniform regularity estimates for the normalized Gauss curvature flow in higher dimensions.  The convergence of solutions in $C^\infty$-topology to a smooth strictly convex soliton as $t$ approaches to infinity is obtained as a consequence of these estimates together with an earlier result of Andrews. The estimates are established via the study of a new entropy functional for the flow.
\end{abstract}
\subjclass{35K55, 35B65, 53A05, 58G11}

\maketitle

\leftskip 0 true cm \rightskip 0 true cm
\renewcommand{\theequation}{\arabic{equation}}
\setcounter{equation}{0} \numberwithin{equation}{section}

\section{Introduction}

The Gauss curvature flow was introduced by Firey \cite{Firey} to model the changing shape of a tumbling stone subjected to collisions from all directions with uniform frequency.
Suppose that $\{M_t\}\subset \mathbb R^{n+1}$ is a family of compact smooth strictly convex hypersurfaces with $t\in [0,T)$.
Denote by $X(x,t)$ and $K(x, t)$ the position vector and the Gauss curvature of $M_t$.
$\{M_t\}$ is a solution of the Gauss curvature flow, if $X(x, t)$ satisfies the equation
\begin{equation}\label{gauss-1}
\frac{\partial   X(x, t)}{\partial t}=-K(x, t)\nu(x,t),
\end{equation} where $\nu(x,t)$ is the unit exterior normal of the hypersurface $M_t$.

Assuming the existence, uniqueness and regularity of the solution, Firey proved that if the initial convex surface ($M_0\subset \mathbb{R}^3$) is symmetric with respect to the origin (also called centrally symmetric), then the flow (\ref{gauss-1}) contracts the initial surface into a point in finite time and becomes spherical in shape in the process. The last statement can be rephrased that the normalized  flow (with preserved enclosed volume) converges to a round sphere. He conjectured that the result holds in general. After this initial work, the existence and uniqueness of the Gauss curvature flow in any $\mathbb{R}^{n+1}$ was established by Chou \cite{Tso}. In the same paper it was also proved that the Gauss curvature flow contracts the initial convex hypersurface into a point in finite time. More than a decade later, in a breakthrough work \cite{An3}, Andrews proved that the normalized flow in $\mathbb{R}^3$ does converge to a round sphere, namely evolving surfaces become spherical in the process, hence proving the conjecture of Firey. The proof of Andrews \cite{An3} relies on a pinching estimate, which makes use that the surface is $2$-dimensional in a crucial way. It then remains an interesting question whether or not the same picture holds in high dimensions.

In this paper, we establish uniform regularity of solutions to the normalized Gauss curvature flow. By Chou's work, the convex hypersurfaces $M_t$ (and the enclosed convex body $\Omega_t$) shrink to a point along the Gauss curvature flow at a finite time $T$. If we choose this limiting point as the origin and normalize $M_t$ such that the  enclosed volume (namely the Lebesgue measure $|\Omega_t|$) is equal to the volume of the unit ball, the normalized Gauss flow satisfies the equation:
\begin{equation}\label{gauss-nor1}
\frac{\partial   X(x, t)}{\partial t}=(-K(x, t)+u(x,t))\nu(x,t),
\end{equation}
where $u(x,t)=\langle X(x,t),\nu(x,t)\rangle $ is the supporting function.

The following is the main result of the paper.

\begin{theorem}\label{mainT}
Suppose that $M_0$ is a compact strictly convex hypersurface in $\mathbb R^{n+1}$ such that volume of the enclosed convex body  is equal to the volume of the unit ball $B_1(0)$. Assume the origin is the contracting point of the un-normalized flow (\ref{gauss-1}).  Let $\{\Omega_t\}$ be the convex bodies enclosed by $\{M_t\}$, the solution to the normalized flow (\ref{gauss-nor1}) with the above normalization. Then there exists positive constant $\Lambda\ge 1$ depending only on $n$ and $M_0$ such that,
\begin{equation} B_{\frac{1}{\Lambda}}(0)\subset \Omega_t \subset B_{\Lambda}(0), \quad \forall\quad  0\le t<\infty.\end{equation}
Moreover, for any integer $k\ge 1$, there is a constant $C(n,k,M_0)$ depending on on $n, k$ and the initial hypersurface $M_0$, such that,
\begin{equation}\label{est}
\|M_t\|_{C^k}\le C(n,k, M_0).\end{equation}
Finally, the flow (\ref{gauss-nor1}) converges in $C^\infty$-topology to a smooth strictly convex soliton $M_{\infty}$ satisfying equation
\begin{equation}\label{soliton-e}
K(x)=u(x), \forall x\in M_{\infty}.
\end{equation}
\end{theorem}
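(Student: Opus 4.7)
My plan is to establish uniform $C^0$ bounds on the support function via a new entropy functional, bootstrap to higher regularity using standard parabolic theory, and then invoke the cited result of Andrews to pass from subsequential convergence to full $C^\infty$ convergence.

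The first task is to find a functional $\mathcal{E}(\Omega_t)$ that is monotone along the normalized flow (\ref{gauss-nor1}), whose critical points are exactly the solitons $K=u$, and which is coercive on the space of convex bodies of fixed volume. A natural candidate is an expression of the form
\begin{equation*}
\mathcal{E}(\Omega)=\int_{S^n}F(h)\,d\sigma - \frac{1}{n+1}\int_{S^n}\log\det(\nabla^2 h + hI)\,d\sigma,
\end{equation*}
combining the Gauss-image log-determinant (whose first variation produces $K^{-1}$) with a suitable potential $F$ in $h$ chosen so that the $-K+h$ structure of (\ref{gauss-nor1}) makes $d\mathcal{E}/dt$ of definite sign. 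Monotonicity together with the volume normalization $|\Omega_t|=|B_1|$ and the hypothesis that the origin is the singular point of the un-normalized flow should control $h$ from above (ruling out elongation) and from below (ruling out degeneration into a thin slab), yielding $B_{1/\Lambda}(0)\subset \Omega_t\subset B_\Lambda(0)$.

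With the $C^0$ bounds in hand, I would next derive $C^2$ estimates: uniform upper and positive lower bounds on the eigenvalues of the radii-of-curvature matrix $\nabla^2 h + hI$. The lower bound on the principal radii (equivalently, an upper bound on the Gauss curvature) should follow from the maximum principle applied to an auxiliary function built out of $h$, $K$, and the entropy density, with the integrated entropy monotonicity providing the crucial global input that defeats dimensional obstructions. The upper bound on the radii (equivalently a lower bound on the speed $-K+h$) then follows by differentiating (\ref{gauss-nor1}) in $t$, applying the maximum principle, and using the $C^0$ bound on $h$. Once these hold, (\ref{gauss-nor1}) is uniformly parabolic with a concave operator (since $-\det^{-1}$ is concave on positive definite matrices), so Krylov--Safonov and Evans--Krylov yield $C^{2,\alpha}$ estimates, and standard Schauder bootstrapping gives (\ref{est}) for every $k$.

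For convergence, the monotonicity of $\mathcal{E}$, its boundedness from the $C^0$ bounds, and the uniform $C^k$ control force any sequence $t_j\to\infty$ to admit a $C^\infty$-convergent subsequence whose limit is a smooth strictly convex soliton satisfying (\ref{soliton-e}); Andrews's cited earlier theorem then supplies the rigidity needed to upgrade this to full $C^\infty$ convergence to a single $M_\infty$. The main obstacle I anticipate is the $C^2$ estimate, specifically the positive lower bound on the principal radii of curvature in dimensions $n\ge 2$, where Andrews's pinching technique from the surface case is unavailable. The entire argument rests on the new entropy being strong enough to control this quantity without any dimensional restriction, and identifying a functional with exactly the right structure to produce such a pointwise bound from an integral inequality is the critical insight the proof must supply.
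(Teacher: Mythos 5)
Your outline correctly identifies the overall architecture --- entropy monotonicity giving $C^0$ bounds, then $C^2$ estimates, then Krylov/Schauder, then Andrews's uniqueness-of-limit theorem --- but the two load-bearing ideas of the paper are absent, and the substitutes you propose are either the wrong object or not obviously workable.

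First, the entropy. The paper's functional is not a combination of a potential in $h$ with the log-determinant $\log\det(\nabla^2 h + hI)$. It is simply
\[
\mathcal{E}(\Omega) = \sup_{z_0}\ \frac{1}{\omega_n}\int_{\mathbb{S}^n}\log u_{z_0}(x)\,d\theta(x),
\]
the supremum over all reference points $z_0\in\Omega$ of the average of $\log u_{z_0}$. No curvature appears. Its nonnegativity is exactly the Blaschke--Santal\'o inequality combined with Jensen, its monotonicity is an elementary computation (Firey's, adapted to a moving reference point), and its critical points under the volume constraint are solitons. The functional you wrote down, involving $\log\det(\nabla^2 h + hI)$, is essentially Chow's entropy $\mathcal{E}_C=\aint K\log K\,d\sigma$ in disguise; the paper does use $\mathcal{E}_C$ as a comparison quantity, but the new coercive functional is the $\sup$-over-reference-points one.

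Second, and this is the step your sketch does not address at all, the uniform lower bound on $u$ is not a straightforward consequence of monotonicity plus fixed volume. Fixed volume and a diameter bound give an inradius bound by John's lemma, but the inball may sit far from the origin, so $\min u$ could in principle degenerate. The paper's resolution is to track the \emph{entropy point} $e(\Omega_t)$ (the unique maximizer in the definition of $\mathcal{E}$). One shows: the entropy point is interior and stays a definite distance from the boundary for any $\Omega$ with $\mathcal{E}$ bounded above and volume bounded below (a compactness argument via Blaschke selection and Fatou); the entropy $\mathcal{E}(\Omega_t)$ decreases to a limit $\mathcal{E}_\infty$; a quantitative concavity estimate shows $\mathcal{E}(\Omega_t) - \aint\log u(x,t)\,d\theta \gtrsim |e(\Omega_t)|^2$; and a delicate comparison (translating the support function toward the entropy point of $\Omega_T$ and letting $T\to\infty$) forces $\aint\log u(x,t)\to\mathcal{E}_\infty$, hence $e(\Omega_t)\to 0$. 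The lower bound on $u(\cdot,t)$ then follows because the origin eventually lies near the entropy point, which is uniformly interior. None of this is recoverable from ``the hypothesis that the origin is the singular point of the un-normalized flow should control $h$ from below''; that hypothesis gives positivity of $u$ at each finite time but not a uniform $\epsilon$ as $t\to\infty$.

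Third, the lower bound on $K$ (equivalently, upper bound on the radii of curvature) is not obtained by ``differentiating in $t$ and applying the maximum principle.'' The evolution of the speed has the wrong sign structure for that to yield a lower bound on $K$ directly. The paper instead transfers to the un-normalized flow, invokes Chow's differential Harnack estimate $K(x,t)t^{n/(n+1)}$ nondecreasing in $t$ (under Gauss-map parametrization), and combines it with the already-established two-sided $C^0$ bound $\Lambda^{-1}(T-t)^{1/(n+1)}\le u\le \Lambda(T-t)^{1/(n+1)}$ in a dyadic iteration $t_j = T - \tfrac{T}{2}\alpha^j$: the $C^0$ bound forces $u(x,t_j)-u(x,t_{j+1})$ to be bounded below, which via $u(x,t')=\int_{t'}^T K\,dt$ produces a time $s_j\in[t_j,t_{j+1}]$ with $K(x,s_j)(T-s_j)^{n/(n+1)}$ bounded below, and Chow's Harnack propagates that bound forward. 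This is the genuine obstacle you correctly anticipated, but you did not supply the mechanism, and the one you proposed does not work. Once both $\det A$ and (via Newton's inequality and a maximum-principle argument on $\sigma_1(A)$) the trace of $A$ are uniformly controlled, the remainder of your outline --- Krylov--Safonov, Evans--Krylov, Schauder, sequential convergence, and Andrews's Theorem 2 of the cited IMRN paper --- matches the paper.
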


 Related to the above result,  in \cite{Hamilton-gauss}, R. Hamilton obtained the upper bound of the diameter and upper bound for the Gauss curvature for the normalized flow. In view of the Blaschke selection theorem and a general $C^\infty$-convergence result of Andrews \cite{Andrews-IMRN} which assumes the regularity of the limiting soliton, the contribution of this paper  is mainly  on the uniform $C^2$-estimates for the normalized Gauss curvature flow.  This $C^2$-estimate relies  on a $C^0$-estimate on the support function $u(x, t)$ (particularly a uniform lower bound) and a uniform lower estimate on the Gauss curvature. To prove that the support function $u(x, t)$ of solution to (\ref{gauss-nor1}) is uniformly bounded from below by a positive constant, we need to introduce a new entropy functional $\mathcal{E}(\Omega_t)$ (see Section 2 for the definition) for the enclosed convex body $\Omega_t$.   The nonnegativity of the entropy is deduced from the classical  Blachke-Santal\'o inequality \cite{Sch}. The monotonicity of the entropy along the flow, as well as geometric estimates in terms of the entropy, plays the basic role.  As a by-product of our study of this new entropy functional we deduce the non-negativity of Chow's entropy \cite{Chow},  as well as the nonnegativity of Firey's entropy \cite{Firey} (which is only defined with respect to the above normalization by placing the limiting point at the origin) for the non-centrally symmetric case. Both cases are not known previously despite the use of the terminology. Above mentioned upper bounds of Hamilton on the diameter and the Gauss curvature can also be derived from the uniform lower bound on $u(x,t)$ proved here.

It remains an open question whether or not the sphere is the only compact soliton with positive Gauss curvature. On this we prove that the unit sphere is stable among the admissible variations. We also show that for the solitons with the normalized enclosed volume, there exists a sharp lower estimates on the volume of the dual body, which implies Firey's uniqueness among solitons with central symmetry. The interested reader should consult \cite{Andrews-IMRN, Chow,  Das-Ham, Das-Lee, Hamilton-gauss} for earlier works and further references on the subject.

\section{An entropy functional and basic properties}

Let $ \Omega$ be a bounded closed convex body such that $0\in \Omega\subset \bfR^{n+1}$ and $M\doteqdot \partial \Omega$. Let $u: \bfS^n \to \bfR$ be the support function of $\Omega$,  which is defined for any $x\in \bfS^n$ by
$$
u(x)\doteqdot \max_{z\in \Omega} \langle x, z\rangle=\max_{z\in M}\langle x, z\rangle.
$$
Generally for any $z_0\in \Omega$, one can define the support function with respect to $z_0$ as
$$
u_{z_0}(x)\doteqdot \max_{z\in \Omega} \langle x, z-z_0\rangle.
$$
Define an entropy functional $\mathcal{E}(\Omega)$ by:
\begin{eqnarray*} \mathcal{E}(\Omega)\doteqdot\sup_{u_{z_0}>0}\frac1{\omega_n}\int_{\mathbb S^n} \log u_{z_0} (x) \, d\theta(x).\end{eqnarray*}
Here $\omega_n$ is the area of $\bfS^n$, $d\theta$ is the induced surface measure and the supremum  is taken among all  positive support functions.
(Later we shall show that given a non-degenerate, namely of full dimensional, convex body the entropy in fact can be attained by a positive support function.) It is easy to see that $\mathcal{E}(\Omega)$ is finite.
 In fact, since $u_{z_0}\le \operatorname{diam}(\Omega)$,  the diameter of $\Omega$ and $\mathcal{E}(\Omega)\le \log \operatorname{diam}(\Omega)$. Also denote by  $B(1)$, the unit ball $\mathbb{B}^{n+1}(1)\subset \mathbb{R}^{n+1}$. This quantity was introduced by Firey \cite{Firey} for symmetric convex bodies, there supremum is not necessary. Related quantities were also considered in \cite{Chou-wang}.

Since the non-negativity is the defining property of the entropy concept in physics \cite{evans}, the following result, as well as later monotonicity of $\mathcal{E}(\Omega)$ under the Gauss curvature flow, partially justifies the use of the terminology.

\begin{proposition}\label{key} Suppose $\Omega$ is a bounded convex body in $\mathbb R^{n+1}$ with $V(\Omega)=V(B(1))$ (here $V(\Omega)$ denotes the volume of $\Omega$). Let $z_s\in \Omega$ be the Santal\'o point of $\Omega$. Let $u_s$ be the support function with respect to  $z_s$. Then the estimate
\begin{eqnarray} &&\frac1{\omega_n}\int_{\mathbb S^n} \log u_s\ge 0,\label{est-basic1}\end{eqnarray}
holds with the equality if and only if  $\Omega$ is a round ball centered at $z_s$.
In particular
 $\mathcal{E}(\Omega)\ge 0$, and the inequality is strict unless $\Omega$ is a round ball centered at $z_s$. Moreover, for general convex body $\Omega$ (without volume normalization), we have
\begin{equation}\label{est-basic11}
\mathcal{E}(\Omega)\ge \frac{\log V(\Omega)-\log V(B(1))}{n+1}.\end{equation}\end{proposition}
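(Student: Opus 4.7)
The plan, as suggested by the authors' remark, is to combine the Blaschke-Santal\'o inequality with Jensen's inequality applied to the concave function $\log$. First I would observe that the Santal\'o point $z_s$ must lie in the interior of $\Omega$ (otherwise $(\Omega-z)^\circ$ has infinite volume and $z$ cannot be the minimizer), so that $u_s>0$ on $\mathbb{S}^n$ and $\log u_s$ is well defined. Then I would rewrite the volume of the polar body via its radial function $\rho^\circ=1/u_s$:
\begin{equation*}
V((\Omega-z_s)^\circ) \;=\; \frac{1}{n+1}\int_{\mathbb{S}^n} u_s(\xi)^{-(n+1)}\, d\theta(\xi).
\end{equation*}

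Under the normalization $V(\Omega)=V(B(1))=\omega_n/(n+1)$, the Blaschke-Santal\'o inequality $V(\Omega)\cdot V((\Omega-z_s)^\circ)\le V(B(1))^2$ becomes
\begin{equation*}
\frac{1}{\omega_n}\int_{\mathbb{S}^n} u_s^{-(n+1)}\, d\theta \;\le\; 1.
\end{equation*}
Jensen's inequality applied to the concave function $\log$ then yields
\begin{equation*}
-(n+1)\cdot\frac{1}{\omega_n}\int_{\mathbb{S}^n}\log u_s\, d\theta \;=\; \frac{1}{\omega_n}\int_{\mathbb{S}^n}\log u_s^{-(n+1)}\, d\theta \;\le\; \log\Bigl(\frac{1}{\omega_n}\int_{\mathbb{S}^n} u_s^{-(n+1)}\, d\theta\Bigr) \;\le\; 0,
\end{equation*}
and rearranging delivers (\ref{est-basic1}). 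Since $\mathcal{E}(\Omega)$ is the supremum over all admissible base points, the bound $\mathcal{E}(\Omega)\ge 0$ follows immediately. For the equality case, Jensen's inequality saturates only when $u_s$ is constant on $\mathbb{S}^n$, which forces $\Omega$ to be a round ball centered at $z_s$; this configuration also saturates Blaschke-Santal\'o, giving the characterization.

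For the general (non-normalized) bound (\ref{est-basic11}), I would simply rescale. Setting $\lambda=(V(B(1))/V(\Omega))^{1/(n+1)}$, the body $\lambda\Omega$ has volume $V(B(1))$, its Santal\'o point is $\lambda z_s$, and its support function there is $\lambda u_s$. Applying the normalized inequality to $\lambda\Omega$ and expanding $\log(\lambda u_s)=\log\lambda+\log u_s$ produces the stated lower bound on $\mathcal{E}(\Omega)$. The main---essentially only---nontrivial step is recognizing that the polar-body volume formula is precisely the device that converts Blaschke-Santal\'o into an integral inequality for $u_s^{-(n+1)}$ amenable to Jensen; after that bridge is in place, the rest is bookkeeping, with the only minor subtlety being the verification that $z_s$ lies in the interior so that the integrands are finite.
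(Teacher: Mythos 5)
Your proposal is correct and follows the same route as the paper: rewrite the polar-body volume as $\frac{1}{n+1}\int_{\mathbb S^n}u_s^{-(n+1)}$, combine with the Blaschke--Santal\'o inequality, and apply Jensen's inequality (you phrase it via concavity of $\log$, the paper via convexity of $\exp$, which is the same step). Your rescaling argument for the unnormalized bound \eqref{est-basic11} is a clean way to realize what the paper leaves as ``follows similarly.''
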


Before the proof, we recall  the definition of the Santal\'o  point of $\Omega$. First given $\Omega$ and any $z_0\in \operatorname{Int}(\Omega)$ define $\Omega_{z_0}$ the polar dual of $\Omega$ with respect to $z_0$ by $\{ y+z_0\, |\, \max_{z\in \Omega}\langle y, z-z_0\rangle \le 1\}$. The Santal\'o point   is the unique point $z_s$ such that the associated polar dual $\Omega^*_{z_s}$,  has the minimum volume among all possible polar dual with respect to all possible $z_0\in \Omega$ (in fact it suffices to consider $z_0\in \operatorname{Int}(\Omega)$, the interior of $\Omega$). When $z_s$ is the Santal\'o point we also denote $\Omega^*_{z_s}$ by $\Omega^*_s$ and denote by $\Omega_s$ the translation of $\Omega$ by $-z_s$.

\begin{proof} Let $\Omega^*_s$ be the polar dual of $\Omega$ with respect to $z_s$, the Santal\'o point, its volume can be computed \cite{MP} as
\begin{eqnarray*} V(\Omega_s^*)=\frac{1}{n+1}\int_{\mathbb S^n} \frac1{u^{n+1}_s}\, d\theta.\end{eqnarray*}
Jensen's inequality yields,
\begin{eqnarray*} V(\Omega^*_s) &\ge& \frac{\omega_n}{n+1}\exp\left({\frac1{\omega_n}\int_{\mathbb S^n}\log\left(\frac1{u^{n+1}_{s}}\right)d\theta}\right)\\
& =&V(B(1))\exp\left({-\frac{n+1}{\omega_n}\int_{\mathbb S^n}\log u_s}\right)\end{eqnarray*}
Since $V(\Omega)=V(B(1))$, together with the Blaschke-Santal\'o inequality
\begin{eqnarray} V(\Omega)V(\Omega_s^*)\le V(B(1))^2 \label{BS}\end{eqnarray}
we have that
\begin{eqnarray*} V(B(1))\exp\left(-\frac{n+1}{\omega_n}\int_{\mathbb S^n}\log u_s\right)\le V(\Omega_s^*)\le \frac{V(B(1))^2}{V(\Omega)} \end{eqnarray*}
from which it is easy to see (\ref{est-basic1}). The estimate (\ref{est-basic11}) follows similarly.
If the equality holds,  the Jensen's inequality in the first step of the proof is an equality.
Since $e^x$ is  strictly convex, $\frac1{u^{n+1}_{s}}$, hence $u_s$ is constant. It must be $1$ as $V(\Omega^*_s)=V(B(1))$. Hence $\Omega$ is a ball centered at $x_s$.

As for the final statement of the proposition, we note that $\mathcal{E}(\Omega)\ge \frac1{\omega_n}\int_{\mathbb S^n} \log u_s\ge 0$. By  lemma below which asserts that the entropy is attained at a unique point $z_e$, we deduce that  the assumption $\mathcal{E}(\Omega)= 0$ implies that $\frac1{\omega_n}\int_{\mathbb S^n} \log u_s= 0$ and that $x_s$  is the point where entropy is achieved, namely $x_s=z_e$. By the above proof we have that  $\Omega$ is a ball of radius $1$ centered at $x_s$.  \end{proof}
\medskip

 A geometric approach to the previous result is as follows. For any point $z_0\in \Omega$, by the definition, the dual body $\Omega^*_{z_0}$ is defined by the equation
$$
\Omega^*_{z_0} -z_0=\{ w\, |\, \langle w, z-z_0\rangle \le 1, \forall z\in \Omega\}.
$$
Write $w$ in terms of polar coordinates we have that
\begin{equation}\label{def-dual-body1}
\Omega^*_{z_0} -z_0=\{(r, x)\, |\,  r u_{z_0}(x)\le 1\}.
\end{equation}
Here $u_{z_0}(x)$ is the support function of $\Omega$ with respect to $z_0$. This in particular implies that
$$
V(\Omega^*_{z_0})=\int_0^{1/u_{z_0}(x)} \int_{\mathbb{S}^n}r^n \, d\theta\, dr=\frac{1}{n+1}\int_{\mathbb{S}^n}\frac{1}{u_{z_0}^{n+1}}\, d\theta.
$$

If we normalize the volume of $\Omega$ to be that of the unit ball, Blaschke-Santal\'o inequality implies that
there exists $z_0\in \Omega$ such that $|\Omega^*_{z_0}|\le V(B(1))$. In the case that $\Omega$ is not affine equivalent to the unit ball, such $z_0$ forms an open subset.
Now observe the following
geometric interpretation of the quantity $\int_{\mathbb{S}^n}\log u_{z_0}(x)$.

\begin{proposition}  Let $\Omega^{0}_{z_0}=\Omega^*_{z_0}-z_0$.
Then
$$
\int_{\mathbb{S}^n} \log u_{z_0}(x)\, d\theta(x)=\left(\int_{B(1)\setminus\Omega^{0}_{z_0}}- \int_{\Omega^{0}_{z_0}\setminus B(1)}\right) \frac{1}{|w|^{n+1}}\, d\mu(w)
$$
with $d\mu(w)$ being the Lebesgue measure of $\mathbb{R}^{n+1}$. Namely $\int_{\mathbb{S}^n} \log u_{z_0}$ is the weighted (and signed) volume of $\Omega^0_{z_0}\Delta B(1)$. In particular, for any $z_0$ with  $|\Omega^*_{z_0}|\le |B(1)|$, we have
$\int_{\mathbb{S}^n} \log u_{z_0}(x)\, d\theta(x)\ge 0$. Moreover, if $z_0\in \operatorname{Int}(\Omega)$ is such a point that $\mathcal{E}(\Omega)=\frac{1}{\omega_n}\int_{\mathbb{S}^n} \log u_{z_0}$, then
$$
\int_{\Omega^0_{z_0}} \frac{w}{|w|^{n+1}}\, d\mu(w)=0.
$$
Namely $z_0$ is the center of mass of $\Omega^*_{z_0}$ with respect to the weighted  measure $\frac{d\mu(w)}{|w|^{n+1}}$.
\end{proposition}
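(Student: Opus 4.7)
The plan is to tackle the three assertions in sequence, with polar coordinates on $\mathbb{R}^{n+1}$ doing the work in each case.

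\textbf{(i) The identity.} First I would switch to polar coordinates $w=rx$ with $r\ge 0$ and $x\in\mathbb{S}^n$, so $d\mu(w)=r^n\,dr\,d\theta(x)$; by (\ref{def-dual-body1}) the dual body takes the form $\Omega^0_{z_0}=\{(r,x):\,0\le r\le 1/u_{z_0}(x)\}$. For each fixed direction $x$, the symmetric difference $B(1)\,\Delta\,\Omega^0_{z_0}$ meets the ray in the single interval with endpoints $1$ and $1/u_{z_0}(x)$, on which the Jacobian $r^n$ cancels the factor $r^{-(n+1)}$ to leave $dr/r$. Keeping track of signs, the integrand contributes $+\log u_{z_0}(x)$ when $u_{z_0}(x)>1$ (the interval lies in $B(1)\setminus\Omega^0_{z_0}$) and equally $\log u_{z_0}(x)$ when $u_{z_0}(x)<1$ (the opposite piece, but with a minus sign out front). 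Integrating in $x$ yields the claimed identity.

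\textbf{(ii) Nonnegativity.} For the inequality under $V(\Omega^*_{z_0})\le V(B(1))$ I would exploit the monotonicity of the weight across the unit sphere: on $B(1)\setminus\Omega^0_{z_0}$ one has $|w|\le 1$, hence $|w|^{-(n+1)}\ge 1$, while on $\Omega^0_{z_0}\setminus B(1)$ one has $|w|\ge 1$, hence $|w|^{-(n+1)}\le 1$. Combined with (i),
\begin{align*}
\int_{\mathbb{S}^n}\log u_{z_0}\,d\theta
&\ge |B(1)\setminus\Omega^0_{z_0}|-|\Omega^0_{z_0}\setminus B(1)|\\
&=V(B(1))-V(\Omega^0_{z_0})=V(B(1))-V(\Omega^*_{z_0})\ge 0,
\end{align*}
the last equality because $\Omega^0_{z_0}$ is just a translate of $\Omega^*_{z_0}$.

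\textbf{(iii) Critical point.} Finally, I would compute a first variation of $F(z_0)\doteqdot \int_{\mathbb{S}^n}\log u_{z_0}\,d\theta$ in $z_0$. From the very definition of the support function, $u_{z_0+\epsilon v}(x)=u_{z_0}(x)-\epsilon\langle v,x\rangle$, so differentiating in $\epsilon$ at $0$ and using that $z_0\in\operatorname{Int}(\Omega)$ is a maximizer give
$$
0=\partial_\epsilon F(z_0+\epsilon v)\big|_{\epsilon=0}=-\int_{\mathbb{S}^n}\frac{\langle v,x\rangle}{u_{z_0}(x)}\,d\theta(x)
$$
for every $v\in\mathbb{R}^{n+1}$, hence $\int_{\mathbb{S}^n}x/u_{z_0}(x)\,d\theta=0$. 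A polar change of variables identical to the one in (i) converts the weighted moment into exactly this integral,
$$
\int_{\Omega^0_{z_0}}\frac{w}{|w|^{n+1}}\,d\mu(w)=\int_{\mathbb{S}^n}x\int_0^{1/u_{z_0}(x)}dr\,d\theta(x)=\int_{\mathbb{S}^n}\frac{x}{u_{z_0}(x)}\,d\theta(x).
$$
The subtle point, which I expect to be the only genuine obstacle, is justifying the differentiation under the integral in this last step; it reduces to noting that $u_{z_0}(x)$ is jointly continuous and remains uniformly bounded below on $\mathbb{S}^n$ as $z_0$ ranges over a small neighborhood of an interior point of $\Omega$, after which dominated convergence applies.
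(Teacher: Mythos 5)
Your proof is correct and follows essentially the same route as the paper: polar coordinates turn the weighted volume over the radial interval between $1$ and $1/u_{z_0}(x)$ into $\log u_{z_0}(x)$ for part (i); the pointwise comparison $|w|^{-(n+1)}\gtrless 1$ on either side of the unit sphere, together with the volume hypothesis, gives (ii); and the first variation in $z_0$ plus the same polar computation gives (iii). (The paper leaves (ii) and (iii) terse — "the nonnegativity holds since…" and "a similar calculation" — so your write-up is the natural fleshing-out, and your remark about justifying differentiation under the integral is a reasonable, if minor, point; it holds because for $z_0$ in the interior $u_{z_0}$ is uniformly bounded below on a neighborhood.)
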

\begin{proof} The proof is via a similar calculation as the above:
\begin{eqnarray*}
\int_{\mathbb{S}^n} \log u_{z_0}(x)\, d\theta(x)&=&-\int_{\mathbb{S}^n}\int_1^{\frac{1}{u_{z_0}(x)}}\frac{1}{r}\, dr\, d\theta(x)\\
&=&\left( \int_{\{u_{z_0}(x)\ge1\}\subset \mathbb{S}^n} \int_{\frac{1}{u_{z_0}(x)}}^1  - \int_{\{u_{z_0}(x)<1\}\subset \mathbb{S}^n} \int^{\frac{1}{u_{z_0}(x)}}_1 \right)\frac{1}{|w|^{n+1}} \, d\mu(w)\\
&=& \left(\int_{B(1)\setminus\Omega^{0}_{z_0}}- \int_{\Omega^{0}_{z_0}\setminus B(1)}\right) \frac{1}{|w|^{n+1}}\, d\mu(w),
\end{eqnarray*}
hence the first part of the proposition. The nonnegativity of the entropy holds since on $\frac{1}{|w|^{n+1}}\ge 1$ on $B(1)\setminus \Omega^0_{z_0}$ and $\frac{1}{|w|^{n+1}}\le 1$ on $\Omega^0_{z_0}\setminus B(1)$.

The last claim can be proved via a similar calculation.
\end{proof}

The following lemma asserts that there exists a unique point $z_e\in \Omega$  such that the entropy $\mathcal{E}(\Omega)$ is attained. Such a point $z_e$ shall be called the {\it entropy point}.

\begin{lemma}\label{entropy-ex}  Given $\Omega$, a closed convex body there exists a unique $z_e\in \Omega$ such that
$\mathcal{E}(\Omega)=\frac1{\omega_n}\int_{\bfS^n} \log u_{z_e}(x)$.
\end{lemma}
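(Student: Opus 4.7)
The plan is to identify $F(z_0) := \frac{1}{\omega_n}\int_{\bfS^n} \log u_{z_0}(x)\, d\theta(x)$ as a strictly concave, upper semi-continuous function on the compact convex set $\Omega$, and then to invoke the standard fact that such a function attains a unique maximizer.

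First I would establish strict concavity. For each fixed $x\in\bfS^n$, the map $z_0\mapsto u_{z_0}(x)=u(x)-\langle x,z_0\rangle$ is affine in $z_0$, and $\log$ is strictly concave on $(0,\infty)$. Applying Jensen's inequality pointwise in $x$ and integrating gives, for distinct $z_1, z_2$ with $u_{z_1}, u_{z_2}>0$,
\[
F\bigl(\tfrac{z_1+z_2}{2}\bigr) = \frac{1}{\omega_n}\int_{\bfS^n}\log\frac{u_{z_1}(x)+u_{z_2}(x)}{2}\,d\theta(x)\ \ge\ \frac{F(z_1)+F(z_2)}{2},
\]
with equality forcing $u_{z_1}(x)=u_{z_2}(x)$ a.e., hence $\langle x, z_1-z_2\rangle\equiv 0$ on $\bfS^n$, and so $z_1=z_2$. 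The same Jensen argument extends strict concavity to the set $\{z_0\in\Omega:F(z_0)>-\infty\}$.

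Next I would extend $F$ to all of $\Omega$ by the same formula (with $\log 0 := -\infty$) and verify upper semi-continuity. Since $u_{z_0}(x)\le\operatorname{diam}(\Omega)$ uniformly, $\log u_{z_0}$ is bounded above by the constant $\log\operatorname{diam}(\Omega)$; for any sequence $z_k\to z_0$ in $\Omega$, the pointwise convergence $u_{z_k}(x)\to u_{z_0}(x)$ combined with the reverse Fatou lemma yields $\limsup_k F(z_k)\le F(z_0)$.

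Finally, since $\Omega$ is compact and $F$ is u.s.c., $F$ attains its supremum at some $z_e\in\Omega$; strict concavity on $\{F>-\infty\}$ forces uniqueness. The identification $\mathcal{E}(\Omega)=F(z_e)$ (the sup in the definition being taken over interior $z_0$) follows because concavity together with u.s.c.\ makes $F$ continuous along any segment $(1-t)z_e+tz^0$, $z^0\in\operatorname{Int}(\Omega)$, so the interior supremum is realized at $z_e$. The main obstacle is that $z_e$ might \emph{a priori} lie on $\partial\Omega$: points where $F(z_0)=-\infty$ (such as corners of $\Omega$ where the outward normal cone has positive measure in $\bfS^n$) cannot be maximizers since interior points have finite $F$-value, but smooth boundary points could in principle compete, and ruling them out would require showing the inward directional derivative of $F$ is positive there, exploiting the fact that $\Omega^*_{z_e}$ becomes unbounded in the direction of each outward normal at $z_e$.
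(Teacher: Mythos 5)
Your argument is correct and follows essentially the same route as the paper: strict concavity of $F$ via pointwise Jensen, existence of a maximizer over the compact set $\Omega$ via Fatou (the paper organizes this as a maximizing sequence $p_n\to p$ and then applies Fatou to $-\log u_{p_n}$, while you phrase it as upper semicontinuity plus compactness, but it is the same computation), and uniqueness from strict concavity. The ``main obstacle'' you flag at the end---ruling out boundary maximizers via an inward directional derivative of $F$---is not in fact a gap in this lemma, which asserts only $z_e\in\Omega$; establishing that $z_e\in\operatorname{Int}(\Omega)$ is precisely the content of the subsequent Lemma~\ref{e-p}, where the paper carries out essentially the normal-cone / directional-derivative argument you sketch.
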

 \begin{proof} The quantity $\frac 1{\omega_n} \int_{\bfS^n} \log u_{z_0}(x)$ is a function of $-z_0=(t_1, \cdots, t_{n+1})$ as
$$
F(t)=\frac 1{\omega_n} \int_{\bfS^n}\log \left(u(x)+\sum_{i=1}^{n+1} t_i x_i\right)d\theta(x).
$$
It is easy to see that the convexity of $\Omega$ implies that $u_{z_0}\ge 0$ for any $z_0\in \Omega$ and $F(t)$ is a strictly concave function of $t$. If for the sequence $\{p_n\}$ such that $\frac 1{\omega_n} \int_{\bfS^n} \log u_{p_n}(x) \to \mathcal{E}(\Omega)$, increasingly as $n \to \infty$. Without the loss of generality we may assume that $p_n \to p$. Then by Fatou's lemma, note that $\log u_{z}(x) \le \log \operatorname{diam}(\Omega)$ for any $z$ and $\log u_{p_n}(x)\to \log u_p(x)$, we have that
$$
\frac1{\omega_n}\int_{\bfS^n} -\log u_p(x) \le \frac 1{\omega_n}\liminf_{n \to \infty} \int -\log u_{p_n} =-\mathcal{E}(\Omega).
$$
On the other hand by the definition $\frac1{\omega_n}\int_{\bfS^n} \log u_p(x)\le \mathcal{E}(\Omega)$. Hence $\frac1{\omega_n}\int_{\bfS^n} \log u_p(x)= \mathcal{E}(\Omega)$. The uniqueness follows from the strictly concavity of $F(t)$ (as a function of $t\in \bfR^{n+1}$) and the convexity of $\Omega$.\end{proof}

We also denote $u_{z_e}$ by $u_e$. The next lemma strengthens the above result by asserting that in fact $z_e\in \operatorname{Int}(\Omega)$.

\begin{lemma}\label{e-p} If $\Omega$ is a bounded convex domain with  $\operatorname{Int}(\Omega)\ne \emptyset$, then $\mathcal{E}(\Omega)$ is attained by a unique support function $u_e>0$ such that
\begin{equation}\label{1stvar}\int_{\mathbb S^n}\frac{x_j}{u_e(x)}\, d\theta(x)=0.\end{equation} Moreover for any other support function $u\neq u_e$, $\mathcal{E}(\Omega)>\frac1{\omega_n}\int_{\mathbb S^n} \log u.$
\end{lemma}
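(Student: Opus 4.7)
The existence of a maximizer $z_e\in\Omega$ with $F(z_e):=\omega_n^{-1}\int_{\mathbb{S}^n}\log u_{z_e}\,d\theta=\mathcal{E}(\Omega)$ is supplied by Lemma~\ref{entropy-ex}, so three items remain to be proved: (i) $z_e\in\operatorname{Int}(\Omega)$ (equivalently $u_e:=u_{z_e}>0$); (ii) the Euler--Lagrange identity $\int_{\mathbb{S}^n}x_j/u_e\,d\theta=0$; and (iii) the strict inequality $\mathcal{E}(\Omega)>\omega_n^{-1}\int_{\mathbb{S}^n}\log u$ for any other support function $u\ne u_e$. Once (i) is in hand, (ii) is Fermat's condition applied to the smooth strictly concave function $F(t)=\omega_n^{-1}\int\log(u(x)+\langle t,x\rangle)\,d\theta$ at its interior critical point, and (iii) is the uniqueness of the maximizer that follows from the strict concavity of $F(t)$ already noted in the proof of Lemma~\ref{entropy-ex}. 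So the real work lies entirely in (i).

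To establish (i), I would argue by contradiction: suppose $z_e\in\partial\Omega$ and let $N_{z_e}\cap\mathbb{S}^n$ denote the trace on the sphere of the outer normal cone at $z_e$, on which $u_{z_e}$ vanishes. If $|N_{z_e}\cap\mathbb{S}^n|>0$, then $\log u_{z_e}\equiv -\infty$ on a set of positive measure, forcing $F(z_e)=-\infty$; but the lower bound (\ref{est-basic11}) of Proposition~\ref{key} gives $\mathcal{E}(\Omega)\ge(\log V(\Omega)-\log V(B(1)))/(n+1)>-\infty$, a contradiction. Hence $z_e$ must be a ``smooth'' boundary point with a single outer normal $\nu$, and $u_{z_e}$ vanishes only at $\nu$.

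To eliminate this remaining possibility, pick $p\in\operatorname{Int}(\Omega)$ with $u_p\ge\delta>0$ and form $z_s=(1-s)z_e+sp\in\operatorname{Int}(\Omega)$ for $s\in(0,1]$; the crucial algebraic identity is $u_{z_s}=(1-s)u_{z_e}+s u_p$. Strict concavity of $\log$ yields
\[
\log u_{z_s}(x)\;\ge\;(1-s)\log u_{z_e}(x)+s\log u_p(x)
\]
pointwise, with a large excess on the shrinking neighborhood of $\nu$ where $u_{z_e}\le s u_p$: there $\log u_{z_s}\ge\log(s\delta)$ while $\log u_{z_e}$ is very negative. Integrating this ``log lift'' and combining it with the choice $p=p_m$ from the maximizing sequence of Lemma~\ref{entropy-ex} (so that $F(z_e)-F(p_m)$ is as small as one wishes) produces $F(z_s)>F(z_e)$ for some small $s>0$, contradicting maximality and forcing $z_e\in\operatorname{Int}(\Omega)$. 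The main technical obstacle will be this quantitative balancing near the single vanishing direction $\nu$: in low dimensions ($n\le 2$) the one-sided directional derivative of $F$ at $z_e$ is already $+\infty$ and gives (i) for free, while in higher dimensions the singular integrand $1/u_{z_e}$ is itself integrable near $\nu$ and one has to extract the gain from the strict concavity of $\log$ more carefully.
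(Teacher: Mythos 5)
Your reduction of the lemma to the statement $z_e\in\operatorname{Int}(\Omega)$ is correct, and so is the opening observation that a normal cone $\mathcal{N}_{z_e}(\Omega)$ of positive spherical measure would force $F(z_e)=-\infty$, contradicting the finite lower bound~(\ref{est-basic11}). But the remaining case is where the real work is, and the ``log lift'' idea does not close it. Write $u_{z_e}(x)\sim\theta^\alpha$ near the vanishing direction $\nu$, where $\theta$ is the spherical distance to $\nu$; the excess you gain over the linear lower bound $(1-s)\log u_{z_e}+s\log u_p$ is supported where $u_{z_e}\lesssim s$, i.e.\ on a cap of radius $\sim s^{1/\alpha}$, and integrating $\log(s\delta)-\alpha\log\theta$ against $\theta^{n-1}\,d\theta$ over that cap yields a gain of order $s^{n/\alpha}$. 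Against this stands a generic first--order term of size $O(s)$, and since $z_e$ is a maximizer, the one--sided derivative $\int\langle x,z_e-p\rangle/u_{z_e}\,d\theta$ can perfectly well be finite and $\le 0$. So the log lift wins only when $\alpha>n$; already for a smooth boundary ($\alpha=2$) and $n\ge 3$, the gain $s^{n/2}$ is swamped by the linear term, and choosing $p$ from a maximizing sequence does not help --- concavity still only gives $F(z_s)-F(z_e)\ge -s\bigl(F(z_e)-F(p)\bigr)$, a small \emph{negative} bound. You essentially flag this yourself in the last sentence; it is a genuine gap, not merely a technicality to be smoothed over.

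What the paper supplies at exactly this point is a reflection argument that produces a \emph{sign} for the first variation, rather than a size estimate. After placing $z_e$ at the origin and picking $\eta\in\mathcal{N}_{z_e}(\Omega)$ with $\{-t\eta:0<t<t_0\}\subset\operatorname{Int}(\Omega)$ (justified by the separation theorem), one rotates so $\eta=e_{n+1}$, hence $\Omega\subset\{x_{n+1}\le 0\}$. Letting $N(x)$ be the reflection across $\{x_{n+1}=0\}$ and using $\langle z(x),\eta\rangle\le 0$, one gets $u_e(N(x))\ge u_e(x)$ for $x_{n+1}\ge 0$, strict on a positive--measure set. Folding the integral gives
\[
\frac{d}{ds}\Big|_{s=0^+}\aint_{\mathbb S^n}\log\bigl(u_e+sx_{n+1}\bigr)
=\int_{\{x_{n+1}>0\}}x_{n+1}\Bigl(\frac1{u_e(x)}-\frac1{u_e(N(x))}\Bigr)\,d\theta>0,
\]
a contradiction with maximality regardless of whether the integral is finite or $+\infty$, and independent of the vanishing rate $\alpha$ and of the dimension. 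This sign coming from the half--space containment (not a magnitude estimate from concavity) is the missing ingredient in your proposal. A minor additional point: ruling out positive measure does not force a \emph{single} outer normal --- the normal cone can still be a positive--dimensional but measure--zero set; the paper's argument only needs one $\eta$ in the cone whose opposite ray enters $\operatorname{Int}(\Omega)$.
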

\begin{proof} The main claim here is  that $u_e>0$ everywhere. Assuming this, the claimed (\ref{1stvar}) follows easily by the first variation. Namely express any support function as
\[u(x)=u_e(x)+\sum_{j=1}^{n+1}t_jx_j.\] By the maximum property of $u_e$, the first variation yields,
\[\int_{\mathbb S^n} \frac{x_j}{u_e(x)}\, d\theta(x)=0.\]

Suppose $u_e(x_0)=0$ for some $x_0\in \mathbb S^n$. Then by the convexity of $\Omega$ it is easy to see that $z_e$ must be on the boundary of $\Omega$. We may assume $z_e=0$, the origin. Now we {\it claim} that  there is a support hyperplane of $\Omega$ at the origin with
outer normal $\eta$ such that the line segment
\begin{eqnarray}\label{claim} L=\{-t\eta \quad | \quad 0<t<t_0\} \end{eqnarray}
is inside of $\Omega$, for some small $t_0$.

 We now prove this claim\footnote{We would like thank Gaoyong Zhang to communicating us the proof of {\it claim} \ref{claim}.}. First recall that for any $p\in \Omega$, the tangent cone $T^{\mathcal{C}}_p\Omega$ is defined as $\{\xi\, |\, \langle \xi, p-z_1\rangle \ge 0, \mbox{ for any } z_1 \mbox{ with }\operatorname{dist}(z_1, \Omega)=|z_1-p|\}$. The (out) normal cone $\mathcal{N}_p(\Omega)$ then is defined as $\{\eta\, |\, \langle \eta, \xi\rangle\le 0,\xi \in T^{\mathcal{C}}_p\Omega\}$. Now it is rather elementary to see that for any support hyperplane $H$ at $p$, which can be expressed  as the zero set of $f(z)=\langle \eta, z-p\rangle$,  with the property that  for all $z\in \Omega$, $f(z)\le 0$, $\eta\in \mathcal{N}_p(\Omega)$. Namely the outer normal of any support hyperplane must lies inside the normal cone. To prove the claim it suffices to show that $-\mathcal{N}_p(\Omega)$ intersects $\operatorname{Int}(\Omega)$, due to the convexity of $\Omega$. If  $-\mathcal{N}_p(\Omega)\cap \operatorname{Int}(\Omega)=\emptyset$, by the separation theorem (cf. Theorem 1.3.8 of \cite{Sch}), there must exists a hyperplane $H$ passing origin  which separates $\operatorname{Int}(\Omega)$ and $-\mathcal{N}_p(\Omega)$. This hyperplane must be a support hyperplane. But its out normal $\eta$  (with respect to $\Omega$) lies inside $ \mathcal{N}_p(\Omega)$. Hence it implies that $-\eta \in -\mathcal{N}_p(\Omega)$. This is a contradiction since $-\mathcal{N}_p(\Omega)$ is on the other (out) side of $H$ than the one  of $\Omega$. The {\it claim} (\ref{claim}) also follows from Theorem 1.12 of \cite{B}.

We may, without the loss of generality, assume that $\eta=(0,\cdots,0,1)$, the north pole is one with the property that the associated line segment $L$ defined by (\ref{claim}) lies inside $\operatorname{Int}(\Omega)$. Hence $\Omega$ is contained in the  half space $x_{n+1}\le 0$ and touches the hyperplane at the origin. For any $x=(x_1,\cdots, x_n,x_{n+1})\in \mathbb S^n$ with $x_{n+1}\ge 0$, let $N(x)=(x_1,\cdots, x_n,-x_{n+1})$ be its symmetric image with respect to $x_{n+1}=0$. By definition, $u_e(x)=\sup_{z\in \Omega}\langle z,x\rangle$. Since  $\Omega$ is closed, for each $x\in \mathbb S^n$, there is $z(x)\in  \Omega$ such that $u_e(x)=\langle z(x),x\rangle$.
Hence
\[u_{e}(N(x))\ge \langle z(x), N(x)\rangle \ge \langle z(x), x\rangle=u_{e}(x), \quad \forall x\in \mathbb S^n \quad \mbox{with $x_{n+1}\ge 0$},\]
here the fact $\langle z(x), \eta \rangle\le 0$ is used. Noticing that $z_e=0$ and $u_e(\eta)=0$ and obviously $u_e(N(\eta))>0$, the above inequality holds strict inequality for some $x\in \mathbb{S}^n$ consisting of a set of positive measure. Consider
new support function $u_s(x)=u_e(x)+s x_{n+1}$. By the fact that the line segment  $L$,  defined as (\ref{claim}),  lies in the interior of $\Omega$, $u_{s}(x)>0, \forall\, x\in \mathbb S^n, \forall\,  0<s<t_0$. On the other hand,
\begin{eqnarray*}\frac{d}{d s}\left.\left(\int_{\mathbb S^n}\log u_{s}\right)\right|_{s=0}&=&\int_{\mathbb S^n}\frac{x_{n+1}}{u_e(x)}\\
&=& \int_{\{x_{n+1}>0\}}\frac{x_{n+1}}{u_e(x)}+\int_{\{x_{n+1}<0\}}\frac{x_{n+1}}{u_e(x)}\\
&=& \int_{\{x_{n+1}>0\}}\left(\frac{x_{n+1}}{u_e(x)}-\frac{x_{n+1}}{u_e(N(x))}\right)>0,\end{eqnarray*}which is a contradiction to the definition of $u_e$. Therefore, $u_e(x)>0, \forall x\in \mathbb S^n$. \end{proof}

\medskip

In the rest of this section we derive some geometric estimates in terms of  the entropy. Let $\rho_{+}(\Omega) $  ($\rho_{-}(\Omega)$) be the outer (inner) radius of  a convex body $\Omega$. By definition, the outer radius is the  radius of the smallest ball which contains $\Omega$ and the inner radius is the radius of the biggest ball which is enclosed by $\Omega$. There is also a width function $w(x)$ which is defined as $u_{z_0}(x)+u_{z_0}(-x)$, where $u_{z_0}$ is the support function with respect to $z_0$.
 It is clear that $w(x)$ is independent of the choice of $z_0$. The $w_{+}$ and $w_{-}$ denote the maximum and minimum of $w(x)$. The following estimates  have been  known \cite{Andrews-thesis}
 \begin{equation}\label{andrews-eq1}
 \rho_{+}\le \frac{w_{+}}{\sqrt{2}}, \quad \quad \rho_{-}\ge \frac{w_{-}}{n+2}.\end{equation}
 Below we prove  a result relating these geometric quantities with the entropy.

\begin{corollary}\label{rho-e} For a convex body $\Omega$,
\begin{equation}\label{C0-upper} \max\{w_{+}, \rho_{+}(\Omega)\}\le C_n e^{\mathcal{E}(\Omega)},\end{equation}
where $C_n$ is a dimensional constant. There is also the lower estimate:
\begin{equation}\label{C0-lower} \min\{\rho_{-}(\Omega),  w_{-}\} \ge C'_n V(\Omega) e^{-n\mathcal{E}(\Omega)},\end{equation}
where $C'_n$ is another dimensional constant.
\end{corollary}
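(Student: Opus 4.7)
\textbf{Plan for the proof of Corollary \ref{rho-e}.}

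For the upper bound \eqref{C0-upper}, the strategy is to pick a specific reference point $z_0\in\Omega$ that exposes the diameter and then bound $\mathcal{E}(\Omega)$ from below by $\frac{1}{\omega_n}\int_{\mathbb{S}^n}\log u_{z_0}$. Let $p_1,p_2\in\Omega$ realize the diameter $w_+=|p_1-p_2|$, set $z_0=(p_1+p_2)/2\in\Omega$ by convexity, and let $\hat{x}=(p_2-p_1)/w_+$. Since $p_1,p_2\in\Omega$, the definition of the support function gives
\[
u_{z_0}(x)\ge\max\bigl(\langle x,p_1-z_0\rangle,\langle x,p_2-z_0\rangle\bigr)=\tfrac{w_+}{2}|\langle x,\hat{x}\rangle|,
\]
so $\log u_{z_0}(x)\ge\log(w_+/2)+\log|\langle x,\hat{x}\rangle|$. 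The key technical input is that the integral
\[
c_n:=-\frac{1}{\omega_n}\int_{\mathbb{S}^n}\log|\langle x,\hat{x}\rangle|\,d\theta(x)
\]
is a finite positive dimensional constant; this reduces to checking $\int_0^\pi|\log|\cos\theta||\sin^{n-1}\theta\,d\theta<\infty$, which is a routine integrability check near $\theta=\pi/2$. Then $\mathcal{E}(\Omega)\ge\frac{1}{\omega_n}\int_{\mathbb{S}^n}\log u_{z_0}\ge\log(w_+/2)-c_n$, giving $w_+\le 2e^{c_n}e^{\mathcal{E}(\Omega)}$. The bound on $\rho_+$ is immediate from the trivial inequality $\rho_+\le w_+$ (centering an enclosing ball at any point of $\Omega$ and taking radius equal to the diameter).

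For the lower bound \eqref{C0-lower}, the idea is to compare $V(\Omega)$ with the volume of a slab of width $w_-$ intersected with the smallest enclosing ball. Let $\hat{x}$ be a direction realizing the minimum width, so $\Omega$ lies in a slab of thickness $w_-$ perpendicular to $\hat{x}$; simultaneously $\Omega\subset B_{\rho_+}(z_c)$ for the center $z_c$ of the smallest enclosing ball. Slicing perpendicular to $\hat{x}$, each cross-section sits inside an $n$-disk of radius at most $\rho_+$, so integrating the cross-sectional volume gives
\[
V(\Omega)\le c_n\,w_-\,\rho_+^n\le c_n\,w_-\,w_+^n.
\]
Combining this with the upper bound $w_+\le C_n e^{\mathcal{E}(\Omega)}$ from the first step yields
\[
w_-\ge\frac{V(\Omega)}{c_nw_+^n}\ge C'_n\,V(\Omega)\,e^{-n\mathcal{E}(\Omega)}.
\]
The corresponding bound for $\rho_-$ then follows from Andrews' inequality $\rho_-\ge w_-/(n+2)$ quoted in \eqref{andrews-eq1}, absorbing the factor $(n+2)$ into the dimensional constant.

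The main obstacle (such as it is) is verifying the finiteness of the spherical integral of $\log|\langle x,\hat{x}\rangle|$ and keeping track of the dimensional constants so that they depend only on $n$. Everything else is a direct use of convexity (for the lower bound on $u_{z_0}$ from two diameter-achieving points) and an elementary slab volume estimate.
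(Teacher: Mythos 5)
Your proof is correct and follows essentially the same route as the paper: bound $\mathcal{E}(\Omega)$ from below by testing against a reference point at the midpoint of a segment of length comparable to $w_+$ inside $\Omega$ (the paper uses the midpoint of the segment from the origin to a boundary point $z_1$ achieving the support function, you use the midpoint of a diameter-realizing chord), reducing the estimate to the dimensional constant $\frac{1}{\omega_n}\int_{\mathbb{S}^n}\log|\langle x,\hat{x}\rangle|\,d\theta$, and then derive the lower bound by comparing $V(\Omega)$ with a slab of thickness $w_-$ and base radius $\rho_+$ and invoking \eqref{andrews-eq1}.
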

\begin{proof}
 The upper estimate can  be reduced to the corresponding upper  estimate of $w_{+}$ in view of (\ref{andrews-eq1}). Assume that $w(x_0)=w_{+}$. Without the loss of the generality we may assume that $u_{z_0}(x_0)\ge u_{z_0}(-x_0)$,  $z_0=0$. Hence $w_+\le 2u_0(x_0)$. Assume that $u_0(x_0)=\langle z_1, x_0\rangle$ for $z_1 \in \partial \Omega$. Applying the rotation we may also assume that $z_1=(0, \cdots, 0, a\rangle$, with $a=|z_1|$. Then $w_+\le 2a$. By the convexity, the line segment $tz_1$ (with $0\le t \le 1$) lies inside $\Omega$. It is also clear that the support function for this segment with respect to $\frac{z_1}{2}$ is $\frac{1}{2}|\langle z_1, x\rangle|$. Hence it  is bounded from above by $u_{\frac{z_1}{2}}(x)$.  Therefore
\begin{eqnarray*}
 \omega_n\, \log a -\omega_n\,  \log 2 +\int_{\bfS^n} \log |x_{n+1}|\, d\theta(x)&=& \int_{\bfS^n} \log \frac1 2|\langle z_1, x\rangle|\, d\theta(x) \\
&\le&\int_{\bfS^n} \log u_{\frac{z_1}{2}} \, d\theta(x)\\
&\le& \omega_n \mathcal{E}(\Omega).
\end{eqnarray*}
Notice that the integral on the left hand side  depends only on $n$.  This gives an
upper bound of $a$, hence an estimate for $w_{+}$. The lower bound on $\rho_-$ can be derived
out of this and the observation that $\Omega$ can be  enclosed in a cylinder with the  base of a ball of
radius $\rho_+$, and the height of $2w_-$. Hence
$$
n\omega_{n-1} \rho_+^{n}\cdot 2w_- \ge V(\Omega).
$$
The lower bound of $\rho_-$ follows from estimate of $\rho_{-}$ in terms of $w_{-}$.
\end{proof}

\section{ Gauss curvature flow and entropies}

First we recall the relation between the embedding $X:M \to \bfR^{n+1}$ of $M$, a closed convex hypersurface in $\bfR^{n+1}$ and   the related support function $u(x): \bfS^{n}\to \bfR$ of the enclosed convex body $\Omega$ (here we assume that $0\in \Omega$ and $u(x)=u_0(x)$):
$$
u(x)=\langle z, X(\nu^{-1}(z))\rangle
$$
where $\nu(y): M \to\bfS^n$ is the Gauss map. For convenience we also denote  $X(
\nu^{-1}(x))$ by $X(x)$ (namely $X(x)$, for $x\in \bfS^n$, denotes the embedding reparametrized via the Gauss map). The following equations are well-known \cite{Andrews-thesis}:
\begin{eqnarray}
X(x)&=& u(x) \cdot x +\bar{\nabla} u \label{1st}\\
\left(W^{-1}\right)^i_j&=& \bar{g}^{ik}\left(\bar{\nabla}_k \bar{\nabla}_j u +u \bar{g}_{kj}\right). \label{2nd}
\end{eqnarray}
Here $W=d\nu$ is the Weingarten map, $\bar{\nabla}$ is the covariant derivative of $\bfS^n$ with respect to standard induced metric $\bar{g}$ as the boundary of the unit ball in $\bfR^{n+1}$. It is clear from (\ref{1st}) that changing of the reference point $z_0$ in the support function amounts to translating by $-z_0$ on the embedding $X(x)$, and from (\ref{2nd}) that the Weingarten map $W$ is independent of the choice of the reference point $z_0$. Let $K(x)=\det( W)$ be the Gauss curvature. First we derive the following estimate on Chow's entropy  \cite{Chow} in terms of the entropy defined in the last section.

\begin{proposition}\label{two-entropy} Let $\Omega$ be a convex body with smooth boundary $M=\partial \Omega$ and volume $V(\Omega)=V(B(1))$. Let $K$ be the Gauss curvature of $M$. Then
\begin{equation}\label{chow-lb}
\mathcal{E}_C (\Omega) \doteqdot \frac{1}{\omega_n}\int_{M}K \log K\, d\sigma \ge  \mathcal{E}(\Omega)\ge 0.
\end{equation}
Here $d\sigma$ is the induced surface measure on $M$.
 Moreover $\mathcal{E}_C(\Omega)=\mathcal{E}(\Omega)$ if and only if $K=u_e$, and $\mathcal{E}_C(\Omega)=0$ if and only if $\Omega=B(1)$, the unit ball. For general $\Omega$,
 $$
 \mathcal{E}_C(\Omega)\ge \mathcal{E}(\Omega)-\log\left( \frac{V(\Omega)}{V(B(1))}\right).
 $$
\end{proposition}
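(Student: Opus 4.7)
The plan is to convert both entropy expressions into integrals against the same uniform measure on $\bfS^n$ via the Gauss map, and then apply Jensen's inequality to a single well-chosen quotient. First I would use the fact that, under the Gauss map $\nu : M \to \bfS^n$, the surface measure pulls back as $d\sigma = K^{-1}\, d\theta$, so that
\[
\int_M K\log K\, d\sigma = \int_{\bfS^n}\log K(x)\, d\theta(x).
\]
Thus Chow's entropy $\mathcal{E}_C(\Omega)$ is just $\frac{1}{\omega_n}\int_{\bfS^n}\log K\, d\theta$, directly comparable with $\mathcal{E}(\Omega)$.

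Next I would use the classical volume identity (for any $z_0\in\Omega$)
\[
V(\Omega) = \frac{1}{n+1}\int_M u_{z_0}\, d\sigma = \frac{1}{n+1}\int_{\bfS^n}\frac{u_{z_0}(x)}{K(x)}\, d\theta(x),
\]
which, recalling $V(B(1)) = \omega_n/(n+1)$, rewrites as
\[
\frac{1}{\omega_n}\int_{\bfS^n}\frac{u_{z_0}}{K}\, d\theta = \frac{V(\Omega)}{V(B(1))}.
\]
Now apply Jensen's inequality to the concave function $\log$ against the normalized uniform probability measure $\frac{1}{\omega_n}d\theta$ on $\bfS^n$:
\[
\frac{1}{\omega_n}\int_{\bfS^n}\log\frac{u_{z_0}}{K}\, d\theta \;\le\; \log\!\left(\frac{1}{\omega_n}\int_{\bfS^n}\frac{u_{z_0}}{K}\, d\theta\right) = \log\frac{V(\Omega)}{V(B(1))}.
\]
Rearranging yields $\mathcal{E}_C(\Omega)\ge \frac{1}{\omega_n}\int_{\bfS^n}\log u_{z_0}\, d\theta - \log(V(\Omega)/V(B(1)))$. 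Taking the supremum over admissible $z_0$ (using Lemma \ref{e-p} to ensure the supremum is attained with a positive support function) produces the general estimate $\mathcal{E}_C(\Omega) \ge \mathcal{E}(\Omega) - \log(V(\Omega)/V(B(1)))$. Specializing to $V(\Omega) = V(B(1))$ and combining with Proposition \ref{key} gives $\mathcal{E}_C(\Omega)\ge\mathcal{E}(\Omega)\ge 0$.

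For the equality analysis, under the normalization $V(\Omega) = V(B(1))$, the Jensen inequality is an equality precisely when $u_{z_0}/K$ is constant on $\bfS^n$, and the value of that constant is forced to be $1$ by the volume identity above; hence $\mathcal{E}_C(\Omega) = \mathcal{E}(\Omega)$ iff $K = u_e$ (with $z_0 = z_e$). If moreover $\mathcal{E}_C(\Omega)=0$, then $\mathcal{E}(\Omega)=0$, and Proposition \ref{key} together with Lemma \ref{entropy-ex} forces $\Omega$ to be a unit ball. I expect the only mildly delicate point to be the justification that the supremum in the definition of $\mathcal{E}(\Omega)$ is truly attained so that the Jensen equality characterization transfers cleanly to the equality case $K=u_e$; but this is exactly the content of Lemmas \ref{entropy-ex}--\ref{e-p}, which I may invoke. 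The rest is a one-line application of Jensen plus the classical mixed-volume identity.
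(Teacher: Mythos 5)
Your proposal follows essentially the same route as the paper: convert $\int_M K\log K\,d\sigma$ to $\int_{\bfS^n}\log K\,d\theta$ via the Gauss map, use the identity $\frac{1}{\omega_n}\int_{\bfS^n}\frac{u_{z_0}}{K}\,d\theta = \frac{(n+1)V(\Omega)}{\omega_n}$, apply Jensen's inequality to $\log(u_{z_0}/K)$, and take the supremum over $z_0$. You derive the un-normalized inequality first and specialize, while the paper does the reverse, and your equality analysis (Jensen equality $\Rightarrow u_e/K$ constant $\Rightarrow K=u_e$ via the volume constraint, then invoking Proposition~\ref{key} for $\mathcal{E}_C=0$) is spelled out in a bit more detail than the paper's terse ``the equality case follows from Proposition~\ref{key}''; but the substance is identical.
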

\begin{proof} First observe that
$
\int_{M}K \log K \, d\sigma =\int_{\mathbb S^n} \log K\, d\theta.
$
On the other hand, recall
\begin{eqnarray*}
\frac{1}{\omega_n}\int_{\mathbb S^n} \frac{u}{K}\, d\theta &=& \frac{1}{\omega_n}\int_M \langle X, \nu\rangle \, d\sigma\\
&=& \frac{n+1}{\omega_n} V(\Omega).
\end{eqnarray*}
Hence the estimate via Jensen's inequality gives, in the case $V(\Omega)=V(B(1))$,
\begin{eqnarray*}
1&=& \frac{1}{\omega_n}\int_{\mathbb S^n} \frac{u}{K}\, d\theta \\
&=&\frac{1}{\omega_n} \int \exp\left(\log \left(\frac{u}{K}\right)\right)\, d\theta \\
&\ge& \exp\left(\frac{1}{\omega_n}\int_{\mathbb S^n} \log \left(\frac{u}{K}\right)\, d\theta\right).
\end{eqnarray*}
This implies that
$$
\frac{1}{\omega_n}\int_{\mathbb S^n}\log K \, d\theta \ge\frac{1}{\omega_n}\int_{\mathbb S^n} \log u\, d\theta.
$$
Since this estimate holds for support functions with respect to any $z_0\in \Omega$, we have the claimed estimate.
The equality case follows from Proposition \ref{key}.
\end{proof}
\begin{remark}
 An alternate argument below, using that $x-1-\log x\ge 0$, proves a similar result with a weaker estimate:
\begin{eqnarray*}
\frac{1}{\omega_n}\left((n+1)V(\Omega)-\omega_n\right)&=& \frac{1}{\omega_n}\int_{\mathbb S^n} \left(\frac{u}{K}-1\right)\, d\theta(x)\\
&\ge& \frac{1}{\omega_n}\int_{\mathbb S^n} \log \frac{u}{K}\, d\theta(x).
\end{eqnarray*}
Hence $\mathcal{E}_C(\Omega)-\mathcal{E}(\Omega) \ge -\frac{V(\Omega)}{V(B(1))}+1$.
\end{remark}

\begin{corollary} Let $\Omega$ and $M$ be as in Proposition \ref{two-entropy}. Let $\sigma_k(W)=\sum_{i_1<i_2\cdots <i_k} \lambda_{i_1}\cdots \lambda_{i_k}$ be the $k$-th elementary symmetric function of  (strictly speaking, eigenvalues $\{\lambda_i\}$ of) the Weingarten map. Then
\begin{eqnarray}
\frac{1}{\omega_n}\int_{\mathbb S^n} \frac{k! (n-k)!}{n!}\sigma_{k}(W)\, d\theta\,\ge 1; \quad \quad
\frac{1}{\omega_n}\int_{\mathbb S^n}\frac{k! (n-k)!}{n!} K\sigma_k(W)\, d\theta \ge 1. \label{chow-PI-rhs}
\end{eqnarray}
The equality holds in any inequality if and only if $\Omega=B(1)$.
\end{corollary}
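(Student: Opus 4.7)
The plan is to chain three ingredients: (i) the Maclaurin inequality for the positive principal curvatures $\lambda_1,\dots,\lambda_n$ of $M$, (ii) Jensen's inequality applied to the strictly convex function $\exp$, and (iii) the lower bound $\frac{1}{\omega_n}\int_{\mathbb S^n}\log K\,d\theta\ge 0$ already established in Proposition \ref{two-entropy} under the normalization $V(\Omega)=V(B(1))$. Note that $\frac{k!(n-k)!}{n!}=1/\binom{n}{k}$, so the quantities to estimate are the averages over $\mathbb S^n$ of the normalized $k$-th mean curvatures $\sigma_k(W)/\binom{n}{k}$ and of $K\,\sigma_k(W)/\binom{n}{k}$.

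For the first inequality, strict convexity of $M$ forces all $\lambda_i>0$, so Maclaurin gives pointwise
\[
\frac{\sigma_k(W)}{\binom{n}{k}}\ \ge\ K^{k/n},\qquad\text{equivalently}\qquad \log\!\frac{\sigma_k(W)}{\binom{n}{k}}\ \ge\ \frac{k}{n}\log K,
\]
with equality at a point iff $\lambda_1=\cdots=\lambda_n$ there. Jensen's inequality for $\exp$ then yields
\[
\frac{1}{\omega_n}\int_{\mathbb S^n}\frac{\sigma_k(W)}{\binom{n}{k}}\,d\theta\ \ge\ \exp\!\left(\frac{1}{\omega_n}\int_{\mathbb S^n}\log\frac{\sigma_k(W)}{\binom{n}{k}}\,d\theta\right)\ \ge\ \exp\!\left(\frac{k}{n\,\omega_n}\int_{\mathbb S^n}\log K\,d\theta\right)\ \ge\ 1,
\]
the last step being Proposition \ref{two-entropy}. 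The second inequality follows from the identical argument applied to $K\,\sigma_k(W)/\binom{n}{k}$: the only change is that the coefficient of $\frac{1}{\omega_n}\int\log K\,d\theta$ in the exponent becomes $(n+k)/n$, which is still nonnegative.

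For the equality case, equality in Jensen (with strictly convex $\exp$) forces $\sigma_k(W)/\binom{n}{k}$ (respectively $K\,\sigma_k(W)/\binom{n}{k}$) to be constant on $\mathbb S^n$, while equality in the last step forces $\frac{1}{\omega_n}\int_{\mathbb S^n}\log K\,d\theta=0$, i.e.\ $\mathcal{E}_C(\Omega)=0$, so by the equality statement of Proposition \ref{two-entropy} we conclude $\Omega=B(1)$. I expect no serious obstacle: the argument is essentially mechanical, driven by Maclaurin followed by a single use of Jensen, and the equality analysis reduces cleanly to invoking Proposition \ref{two-entropy}.
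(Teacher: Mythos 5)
Your argument is correct: Maclaurin gives $\sigma_k(W)/\binom{n}{k}\ge K^{k/n}$ (and $K\sigma_k(W)/\binom{n}{k}\ge K^{(n+k)/n}$) pointwise, Jensen applied to $\exp$ passes to the integral average, and the nonnegativity of $\frac{1}{\omega_n}\int_{\mathbb S^n}\log K\,d\theta=\mathcal{E}_C(\Omega)$ from Proposition \ref{two-entropy} closes the chain, with equality tracing back to $\mathcal{E}_C(\Omega)=0$ and hence $\Omega=B(1)$. The paper states this corollary without proof as an immediate consequence of Proposition \ref{two-entropy}, and your derivation is exactly the intended argument.
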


The Gauss curvature flow
\begin{equation}\label{gauss1}
\frac{\partial   X(x, t)}{\partial t}=-K(x, t)\nu,
\end{equation}
which deforms the hypersurface $M_t$ along its inner normal with the speed given by it Gaussian curvature $K$,
has been studied since Firey's \cite{Firey} article. In terms of the support function, the flow can be expressed as
\begin{equation}\label{gauss2}
\frac{\partial  u(x, t)}{\partial t}=-\frac{1}{\det \left(\bar{g}^{ik}\left(\bar{\nabla}_k \bar{\nabla}_j u +u \bar{g}_{kj}\right)\right)}.
\end{equation}
Since the convexity of $M_t$ is preserved along the flow (\ref{gauss1}), the equation (\ref{gauss2}) in terms of the support function $u$ always makes sense.
In \cite{Tso} the existence of (\ref{gauss1}) has been proved and it was also shown that the flow will contract a convex hypersurface to a limiting point $z_\infty$. The main concern here is to understand what is the limiting shape of the evolving hypersurfaces $M_t$. To understand the asymptotic behavior of the flow we also consider the normalized flow:
\begin{equation}\label{gauss-nor}
\frac{\partial  u(x, t)}{\partial t}=u(x, t)-\frac{1}{\det \left(\bar{g}^{ik}\left(\bar{\nabla}_k \bar{\nabla}_j u +u \bar{g}_{kj}\right)\right)}
\end{equation}
which preserves the enclosed volume $V(\Omega_t)$, provided that the initial $V(\Omega_0)=V(B(1))$.
By suitable scaling (multiplying a factor $e^{\tau}$ to the support function $u$) and re-parametrization ($\tau=-\frac{1}{n+1}\log\left(\frac{T-t}{T}\right)$, with $T$ being the terminating time, which equals to $\frac{1}{n+1}$ under the above normalization, and relabeling $\tau$ by $t$ afterwards), the support function with respect to $z_\infty$ yields a long time {\it positive} solution to (\ref{gauss-nor}). Hence the study of the limiting shape is equivalent to the asymptotic of (\ref{gauss-nor}). When $\Omega$ is centrally symmetric it was shown by Firey that the solution of (\ref{gauss-nor}) converges to a round sphere. In dimension $n=2$, B. Andrews \cite{An3} proved the same result for any convex surfaces in $\mathbb{R}^3$.

In the later discussion we also denote $\bar{g}^{ik}\left(\bar{\nabla}_k \bar{\nabla}_j u +u \bar{g}_{kj}\right)$ by $A$, or $A_u$ to make clear the dependence,  and $-\frac{1}{\det(A)}$ by $\Psi$, viewing as a function of the tensor $A$.  Such a function  $\Psi$ satisfies $-n$-concavity property. Namely
\begin{equation}\label{n-concave}
\ddot \Psi (X, X)\le \frac{n+1}{n}\frac{(\dot \Psi(X))^2}{\Psi}.
\end{equation}
 When we discuss a solution to (\ref{gauss-nor}) we always assumes that $A>0$. The elliptic operator $\mathcal{L}\doteqdot (\dot \Psi_{A})_{ij}\bar{\nabla}_i \bar{\nabla}_j$, in terms of  a normal coordinate of $\mathbb{S}^n$, appears in the linearization  of (\ref{gauss-nor}):
  $$\frac{\partial}{\partial t}-\mathcal{L}-KH -1.$$
   If $u_1$ and $u_2$ are two convex (being the support function of a convex body) solutions to (\ref{gauss-nor}) with $u_1(x, 0)=u_2(x, 0)$, then $v=u_1-u_2$ satisfies, under the normal coordinates,
$$
\frac{\partial}{\partial t}v=\left(\int_0^1 (\dot\Psi (A_s))_{ij} ds \right) \bar{\nabla}_i\bar{\nabla}_j v +\left(\int_0^1 \dot\Psi (A_s)(\delta_{ij})ds\right) v +v
$$
with $A_s=\bar{\nabla}_i \bar{\nabla}_j u_s+u_s \delta_{ij}$ and $u_s=su_1+(1-s)u_2$. Hence $u_1(x, t)\equiv u_2(x, t)$. The following evolution equations are also well-known \cite{An3}.

\begin{proposition} Under the normalized flow (\ref{gauss-nor}), the following hold:
\begin{eqnarray}
\left(\frac{\partial}{\partial t} -\mathcal{L}\right) u &=& (n+1) \Psi +u -u\Psi H, \label{sup-pde}\\
\left(\frac{\partial}{\partial t} -\mathcal{L}\right) \Psi &=& -\Psi^2 H-n\Psi, \label{speed-pde}\\
\left(\frac{\partial}{\partial t} -\mathcal{L}\right) P &=& P-\Psi H P+\ddot \Psi_A(Q, Q). \label{acc-pde}
\end{eqnarray}
Here $H$ is the mean curvature of $M_t\doteqdot \partial \Omega_t$, $P=\frac{\partial \Psi}{\partial t}$, the time derivative of the speed, namely the acceleration, $Q=A_t$.
\end{proposition}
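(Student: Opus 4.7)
The plan is a direct chain-rule calculation based on two elementary identities for the first derivative of $\Psi = -1/\det A$. For any symmetric $2$-tensor $B$ on $\mathbb S^n$,
$$
\dot\Psi_A(B) = -\Psi \operatorname{tr}(A^{-1} B),
$$
so $\dot\Psi_A(A) = -n\Psi$, and since by (\ref{2nd}) the eigenvalues of $A^{-1}$ with respect to $\bar g$ are the principal curvatures, $\dot\Psi_A(\bar g) = -\Psi H$. These two specializations are the only curvature inputs needed.

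For (\ref{sup-pde}), I would substitute $\bar\nabla^2 u = A - u\bar g$ to get $\mathcal{L} u = \dot\Psi_A(A) - u\,\dot\Psi_A(\bar g) = -n\Psi + u\Psi H$, and then subtract this from the normalized flow equation $u_t = u + \Psi$ to produce $(\partial_t - \mathcal{L})u = (n+1)\Psi + u - u\Psi H$.

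For (\ref{speed-pde}), the crucial observation is that $A_u = \bar\nabla^2 u + u\bar g$ is linear in $u$, so the operators $\partial_t$ and $u \mapsto A_u$ commute: $A_t = A_{u_t} = A_{u+\Psi} = A + A_\Psi$, where $A_\Psi := \bar\nabla^2\Psi + \Psi\bar g$. The chain rule and the identity $\dot\Psi_A(A_\Psi) = \mathcal{L}\Psi + \Psi\,\dot\Psi_A(\bar g) = \mathcal{L}\Psi - \Psi^2 H$ then give
$$
\Psi_t = \dot\Psi_A(A_t) = \dot\Psi_A(A) + \dot\Psi_A(A_\Psi) = -n\Psi + \mathcal{L}\Psi - \Psi^2 H,
$$
which is (\ref{speed-pde}).

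For (\ref{acc-pde}), I would differentiate once more:
$$
P_t = \Psi_{tt} = \ddot\Psi_A(A_t, A_t) + \dot\Psi_A(A_{tt}) = \ddot\Psi_A(Q, Q) + \dot\Psi_A(A_{tt}).
$$
Since $u_{tt} = u_t + \Psi_t = (u + \Psi) + P$, linearity again yields $A_{tt} = A + A_\Psi + A_P$. Expanding $\dot\Psi_A(A_{tt})$ summand by summand, the $A$ and $A_\Psi$ pieces reproduce exactly the right-hand side of the identity just proved and thus contribute $\Psi_t = P$, while the $A_P$ piece contributes $\mathcal{L}P - \Psi H P$. Assembling yields $(\partial_t - \mathcal{L})P = P - \Psi H P + \ddot\Psi_A(Q, Q)$, which is (\ref{acc-pde}). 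There is no genuine obstacle here: the whole proposition reduces to two chain rules wrapped around the linearity of $u \mapsto A_u$, the only bookkeeping point being the isolated identity $\dot\Psi_A(A_\Psi) = \mathcal{L}\Psi - \Psi^2 H$ and its analogue for $P$ in place of $\Psi$.
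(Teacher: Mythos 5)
Your proof is correct and is essentially the standard chain--rule derivation that the paper attributes to Andrews without reproducing. The two specializations $\dot\Psi_A(A)=-n\Psi$ and $\dot\Psi_A(\bar g)=-\Psi H$ (the latter because $A^{-1}$ is the Weingarten map by (\ref{2nd})) are exactly the right curvature inputs, the substitution $\bar\nabla^2 u = A-u\bar g$ cleanly gives (\ref{sup-pde}), the linearity of $u\mapsto A_u=\bar\nabla^2 u+u\bar g$ together with $u_t=u+\Psi$ gives (\ref{speed-pde}), and your neat observation that $u_{tt}=u_t+P$ lets you recycle the identity for $\dot\Psi_A(A)+\dot\Psi_A(A_\Psi)=\Psi_t=P$ makes (\ref{acc-pde}) fall out with no extra bookkeeping. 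No gaps; all three identities check.
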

Noticing that $-\Psi H=\dot\Psi_A(\operatorname{id})$, the above two equations can be written as
\begin{eqnarray}
\left(\frac{\partial}{\partial t} -\mathcal{L}\right) u &=& (n+1) \Psi +u +u\dot\Psi_A(\operatorname{id}), \label{sup-pde1}\\
\left(\frac{\partial}{\partial t} -\mathcal{L}\right) \Psi &=& \Psi \dot\Psi_A(\operatorname{id})-n\Psi,
\label{speed-pde1}\\
\left(\frac{\partial}{\partial t} -\mathcal{L}\right) P &=& P +\dot\Psi_A (\operatorname{id})P+\ddot \Psi_A(Q, Q). \label{acc-pde1}
\end{eqnarray}
From these equations it is easy to see that (\ref{gauss-nor}) preserves the volume of the enclosed body. Precisely,
\begin{eqnarray*}
\Sigma(t)\doteqdot\int_{\bfS^n} \frac{u}{-\Psi} \, d\theta(x)&=& \int_{M_t} \langle X(y, t), \nu(y)\rangle d\sigma(y) \\
&=& \int_{\Omega_t} \operatorname{div} (X)\, d\mu_{y}\\
&=& (n+1) V(\Omega_t).
\end{eqnarray*}
A direct calculation using (\ref{sup-pde}), (\ref{speed-pde}) and divergence structure of the operator $\frac{\mathcal{L}}{\Psi^2}$,  yields
$$
\Sigma'(t)=(n+1) \left(\Sigma(t)-\omega_n\right).
$$
Since $\Sigma(0)-\omega_n =0$, this implies that $\Sigma(t)\equiv \omega_n$ for all $t$.

 The evolution equation on $A_{ij}\doteqdot u_{ij}+u\delta_{ij} $, namely the inverse of the Weingarten map $W^{-1}$,  under the normal coordinates is useful.

 \begin{proposition}\label{matrix} Under the normal coordinates, for solution to (\ref{gauss-nor}) the tensor $A_{ij}$ satisfies
\begin{equation}\label{eveq-A1}
\left(\frac{\partial }{\partial t}-\mathcal{L}\right)
A_{ij}=
-KH A_{ij}+  A_{ij}+(n-1)K
\bar{g}_{ij}+\ddot \Psi_A(\bar{\nabla}_i  A, \bar{\nabla}_j
 A).
\end{equation}
Here $\Psi=-K$, $H$ is the mean curvature, namely the sum of the eigenvalues of $A^{-1}$.
 \end{proposition}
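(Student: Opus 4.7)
The plan is a direct calculation at a normal coordinate of $\mathbb{S}^n$. First, differentiate $A_{ij} = u_{ij} + u\,\bar g_{ij}$ in $t$ and substitute the normalized flow $u_t = u + \Psi$ in support-function form to obtain
\[
\partial_t A_{ij} = A_{ij} + \Psi_{ij} + \Psi\,\bar g_{ij}.
\]
Since $\Psi = \Psi(A)$, the chain rule yields $\Psi_{ij} = (\dot\Psi_A)^{kl} A_{kl;ij} + \ddot\Psi_A(\bar\nabla_i A, \bar\nabla_j A)$, so computing $\mathcal{L}A_{ij} = (\dot\Psi_A)^{kl}A_{ij;kl}$ reduces to evaluating the commutator $(\dot\Psi_A)^{kl}(A_{ij;kl} - A_{kl;ij})$.

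For the commutator I would first establish the Codazzi-type identity that $T_{ijk} \doteqdot A_{ij;k}$ is totally symmetric in $i,j,k$. This follows from the Hessian symmetry $u_{ij} = u_{ji}$ together with the spherical Ricci identity $u_{ijk} - u_{ikj} = \bar g_{ik} u_j - \bar g_{ij} u_k$ (using $R_{abcd} = \bar g_{ac}\bar g_{bd} - \bar g_{ad}\bar g_{bc}$). Cycling indices through this total symmetry --- writing $A_{ij;kl} = A_{jk;il}$ --- and then applying the standard $(0,2)$ Ricci commutator to swap the two outermost derivatives produces
\[
A_{ij;kl} - A_{kl;ij} = \bar g_{il} A_{jk} - \bar g_{jk} A_{il} + \bar g_{kl} A_{ij} - \bar g_{ij} A_{kl}.
\]

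Finally, contract against $(\dot\Psi_A)^{kl} = K W^{lk}$. Because $WA = I$, the two ``off-diagonal'' pieces each contract to $K\bar g_{ij}$ and cancel. Using $(\dot\Psi_A)^{kl}A_{kl} = nK$ and $\operatorname{tr}(\dot\Psi_A) = KH$, the two ``diagonal'' pieces contribute $KH A_{ij} - nK\bar g_{ij}$. Substituting $\Psi = -K$ and combining with the time-derivative computation delivers the claimed identity.

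The main obstacle is the commutator step: several $\bar g \otimes A$ correction terms of similar shape must be tracked, and an incorrect sign in either the spherical Ricci identity or in the Codazzi-type total symmetry of $T$ would flip both the $(n-1)K\bar g_{ij}$ and the $-KH A_{ij}$ terms in the final formula. Once the commutator is correctly in hand, the remaining contractions with $\dot\Psi_A$ are routine.
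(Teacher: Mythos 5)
Your proof is correct and follows the same overall scheme as the paper: differentiate $A_{ij}$ in $t$, apply the chain rule to $\Psi_{ij}$, compute the fourth-order commutator, and contract against $(\dot\Psi_A)^{kl}=KA^{kl}$. Where you differ is in the commutator step. The paper computes $\bar\nabla_i\bar\nabla_j\bar\nabla_k\bar\nabla_l u-\bar\nabla_k\bar\nabla_l\bar\nabla_i\bar\nabla_j u$ by three successive applications of the one-form Ricci identity and only afterwards reassembles the answer in terms of $A$, which produces five curvature-correction terms to track. You instead first record that $T_{ijk}=A_{ij;k}$ is totally symmetric (a Codazzi-type identity, using only Hessian symmetry and the spherical Ricci identity), reduce $A_{ij;kl}-A_{kl;ij}$ to a single $(0,2)$ Ricci commutator $(\nabla_l\nabla_j-\nabla_j\nabla_l)A_{ik}$, and then contract. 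This is a cleaner and less error-prone organization: the four $\bar g\otimes A$ terms appear at once, the cancellation of the ``off-diagonal'' pair via $A^{kl}\bar g_{il}A_{jk}=\delta_{ij}$ is immediate, and the sign that could flip both $-KHA_{ij}$ and $(n-1)K\bar g_{ij}$ is localized to a single invocation of the Ricci identity. I verified the commutator identity, the contraction, and the final substitution $\Psi=-K$: each matches the statement. The tradeoff is that the paper's version stays closer to raw derivatives of $u$, which makes (\ref{eq-bij}) for $B_{ij}=\bar\nabla_i\bar\nabla_j u$ fall out of the same computation, whereas your route would require a separate (though short) calculation for that variant.
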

As before (\ref{eveq-A1}) can be written as
\begin{equation}\label{eveq-A2}
\left(\frac{\partial }{\partial t}-\mathcal{L}\right)
A_{ij}=
-\dot\Psi_A(\operatorname{id}) A_{ij}+  A_{ij}-(n-1)\Psi
\bar{g}_{ij}+\ddot \Psi_A(\bar{\nabla}_i  A, \bar{\nabla}_j
 A).
\end{equation}
 Below we show the derivation of corresponding equation on $A_{ij}$ when  $u$ is  instead a solution of (\ref{gauss2}) since the corresponding equation readily yields an upper estimate for the Hessian of $u$, for the un-normalized solution $u$. By the equation (\ref{gauss2}) we have  that
$\frac{\partial}{\partial t}A_{ij}=\bar{\nabla}_i\bar{\nabla}_j \Psi
+\Psi \bar{g}_{ij}$. Now we compute
\begin{eqnarray*}
\bar{\nabla}_j \Psi &=& \dot \Psi_A (\bar{\nabla}_j A),\\
\bar{\nabla}_i \bar{\nabla}_j \Psi &=& \dot \Psi_A (\bar{\nabla}_i
\bar{\nabla}_j A)+\ddot \Psi_A(\bar{\nabla}_iA, \bar{\nabla}_j A),\\
\bar{\nabla}_i\bar{\nabla}_j A_{kl}&=&\bar{\nabla}_i\bar{\nabla}_j
\bar{\nabla}_k \bar{\nabla}_l u +\bar{\nabla}_i\bar{\nabla}_j u
\bar{g}_{kl}.
\end{eqnarray*}
The commutator formulae yield
\begin{eqnarray*}
\bar{\nabla}_j \bar{\nabla}_k \bar{\nabla}_l u &=& \bar{\nabla}_k
\bar{\nabla}_j \bar{\nabla}_l u -\bar{R}_{lpkj}\bar{\nabla}_p u,\\
\bar{\nabla}_i\bar{\nabla}_j \bar{\nabla}_k \bar{\nabla}_l u &=&
\bar{\nabla}_i\left(\bar{\nabla}_k \bar{\nabla}_l \bar{\nabla}_j
u-\bar{R}_{lpkj}\bar{\nabla}_p u\right),\\
&=& \bar{\nabla}_k\bar{\nabla}_i \bar{\nabla}_l \bar{\nabla}_j u
-\bar{R}_{lpkj}\bar{\nabla}_i \bar{\nabla}_p u
-\bar{R}_{jpki}\bar{\nabla}_p \bar{\nabla}_l u
-\bar{R}_{lpki}\bar{\nabla}_j\bar{\nabla}_p u,\\
\bar{\nabla}_k\bar{\nabla}_i \bar{\nabla}_l \bar{\nabla}_j u
&=&\bar{\nabla}_k\bar{\nabla}_l \bar{\nabla}_i \bar{\nabla}_j
u-\bar{R}_{jpli} \bar{\nabla}_p\bar{\nabla}_k u.
\end{eqnarray*}
Here $\bar{R}_{ijkl}=\delta_{ik}\delta_{jl}-\delta_{il}\delta_{jk}$ is the curvature tensor of $\bfS^n$.
Putting together we have that
\begin{eqnarray*}
\bar{\nabla}_i\bar{\nabla}_j \bar{\nabla}_k \bar{\nabla}_l u
&=&\bar{\nabla}_k\bar{\nabla}_l \bar{\nabla}_i \bar{\nabla}_j
u-\bar{R}_{jpli} \bar{\nabla}_p\bar{\nabla}_k
u-\bar{R}_{lpkj}\bar{\nabla}_i \bar{\nabla}_p u\\
&\quad & -\bar{R}_{jpki}\bar{\nabla}_p \bar{\nabla}_l u
-\bar{R}_{lpki}\bar{\nabla}_j\bar{\nabla}_p u.
\end{eqnarray*}
Now  using that $(\dot \Psi_A)^{kl}=K A^{kl}$, where $(A^{ij})$ is the inverse of $(A_{ij})$, we have that
\begin{eqnarray*}
\bar{\nabla}_i \bar{\nabla}_j \Psi &=&
KA^{kl}\left(\bar{\nabla}_i\bar{\nabla}_j \bar{\nabla}_k
\bar{\nabla}_l u \right)+KH \bar{\nabla}_i\bar{\nabla}_j u+\ddot \Psi_A (\bar{\nabla}_iA, \bar{\nabla}_j A)\\
&=&KA^{kl}\left(\bar{\nabla}_k\bar{\nabla}_l(A_{ij}-u \bar{g}_{ij})\right)-2KH\bar{\nabla}_i\bar{\nabla}_j\, u
+2KA^{kl}\left(\bar{\nabla}_k\bar{\nabla}_l u\right)\bar{g}_{ij}\\
&\quad& +KH \bar{\nabla}_i\bar{\nabla}_j u+\ddot
\Psi_A (\bar{\nabla}_iA, \bar{\nabla}_j A)\\
&=&KA^{kl}\left(\bar{\nabla}_k\bar{\nabla}_lA_{ij}\right)-KH\bar{\nabla}_i\bar{\nabla}_j\, u+
+KA^{kl}\left(\bar{\nabla}_k\bar{\nabla}_l
u\right)\bar{g}_{ij}+\ddot \Psi_A (\bar{\nabla}_iA, \bar{\nabla}_j
A)\\
&=&KA^{kl}\left(\bar{\nabla}_k\bar{\nabla}_lA_{ij}\right)-KHA_{ij}+nK
\bar{g}_{ij}+\ddot \Psi_A (\bar{\nabla}_iA, \bar{\nabla}_j A).
\end{eqnarray*}
Combining the above we arrive at the following parabolic
equation on $A_{ij}$:
\begin{equation}\label{eq-aij}
\frac{\partial }{\partial
t}A_{ij}=KA^{kl}\left(\bar{\nabla}_k\bar{\nabla}_lA_{ij}\right)-KHA_{ij}+(n-1)K
\bar{g}_{ij}+\ddot \Psi_A (\bar{\nabla}_iA, \bar{\nabla}_j A).
\end{equation}The equation (\ref{eveq-A1}) follows  similarly if $u$ satisfies (\ref{gauss-nor}) instead.
Let $B_{ij}=\bar{\nabla}_i\bar{\nabla}_j u$, the Hessian of $u$, then if $u$ is a solution to (\ref{gauss1}),  $B$ satisfies:
\begin{equation}\label{eq-bij}
\frac{\partial }{\partial
t}B_{ij}=KA^{kl}\left(\bar{\nabla}_k\bar{\nabla}_lB_{ij}\right)-KHB_{ij}+2nK
\bar{g}_{ij}-2uHK \bar{g}_{ij}+\ddot \Psi_A (\bar{\nabla}_iA,
\bar{\nabla}_j A).
\end{equation}
The immediate consequence of the above is a upper bound on $B_{ij}$.
Let
$$
B_S(t)=\max_{x\in \bfS^n}\max_{X\in T_x\bfS^n, |X|=1} X^i X^j
\bar{\nabla}_i\bar{\nabla}_j u.
$$
If $B_S(t_0)=\max_{t\in [0, T)}B_S(t)$,  using the concavity of $\ddot \Psi$, we have that at an  extremal
point $(x_0, t_0)$, where $B_S(t_0)$ is achieved, by the maximum principle,
$$
H B_S(t_0)\le 2n -2uH.
$$
Hence, via the Cauchy-Schwarz estimate $H\ge n K^{1/n}$,
$$
B_S(t_0)\le \frac{2}{K^{1/n}}.
$$
Using $\inf_{M_t} K \ge \inf_{M_0} K$ we have the uniform upper
bound
\begin{equation}\label{C2-upper}
\left(\bar{\nabla}_i\bar{\nabla}_j u\right)(x, t) \le
\frac{2}{\inf_{M_0} K^{\frac{1}{n}}}\bar{g}_{ij}(x,
t)+\max_x\left(\bar{\nabla}_i\bar{\nabla}_j u\right)(x, 0)
\end{equation}
which recovers a key $C^2$-estimate of \cite{Tso} in the proof of the existence and the convergence to a point for the un-normalized flow.

Making use of the computation above we also have the following evolution equation on $|X|^2=|\bar{\nabla} u|^2+u^2$.
\begin{equation}\label{pos-ev1}
\left(\frac{\partial }{\partial t}-\mathcal{L}\right)|X|^2 =2|X|^2 -2(\dot\Psi_A)_{ij} \bar{\nabla}_i\bar{\nabla}_k u \bar{\nabla}_j\bar{\nabla}_k u+2(n+1)u\Psi +2u^2(\dot\Psi_A)(\operatorname{id}).
\end{equation}

The following discussion  reveals the relation between the entropy $\mathcal{E}(\Omega)$ and the normalized Gauss curvature flow (\ref{gauss-nor}). First note that the equilibrium for (\ref{gauss-nor}) satisfies the equation
\begin{equation}\label{equi}
u(x, t)\cdot \det \left(\bar{g}^{ik}\left(\bar{\nabla}_k \bar{\nabla}_j u +u \bar{g}_{kj}\right)\right)=1.
\end{equation}
Such a solution is also called a shrinking soliton of the Gauss curvature flow.

We now consider the first variation of $\mathcal{E}(\Omega)$ under constraint $V(\Omega)=V(B(1))$.
Fix $\Omega$, by Lemma \ref{e-p}, there exists a unique $z_e\in \operatorname{Int}(\Omega)$ such that $\mathcal{E}(\Omega)=\frac{1}{\omega_n} \int_{\bfS^n} \log u_e(x)\, d\theta(x)$.  Moreover such a $u_e$ satisfies
\begin{equation}\label{e1}\int_{\mathbb S^n}\frac{x_j}{u_e}\, d\theta(x)=0, \quad \forall j=1,\cdots, n+1.\end{equation}
 Let $\Omega_\eta$ be a family of convex body such that $\Omega_0=\Omega$. In terms of support functions, we have a family of functions  $v_\eta\in C^{2}(\mathbb S^n)$
such that
\[A_\eta=((v_\eta)\delta_{ij}+(v_{\eta})_{ij})>0.\]
 We assume that in addition $v_\eta$ satisfies (\ref{e1}).  Hence  $\mathcal{E}(\Omega_\eta)=\frac{1}{\omega_n} \int_{\mathbb S^n}\log v_\eta\, d\theta(x)$.
 Write $v_\eta(x)=u_e(x)+\rho(\eta,x)$, where $\rho(0,x)=0,$ for all $x\in \mathbb S^n$. Below we abbreviate $v_\eta$ by $v$, $u_e$ by $u$. As before, the constraint
$V(\Omega_\eta)=V(B(1))$ implies
\begin{equation}\label{e2} \frac{1}{\omega_n}\int_{\mathbb S^n}v\det(A_v)=1.\end{equation}
Recall that we also have
\begin{equation}\label{e3} \int_{\mathbb S^n}\frac{x_j}{v}=0, \quad \forall j=1,\cdots, n+1; \forall \eta.\end{equation}
\begin{equation}\label{e4} \mathcal{E}(\Omega_\eta)=\frac{1}{\omega_n}\int_{\mathbb S^n}\log v.\end{equation}

\begin{proposition} If $u$, the unique support function which achieves the entropy, is a critical point of $\mathcal{E}(\Omega)$, viewed as a functional of $\Omega$,   under the constraint that $V(\Omega)=V(B(1))$,
it  must be a solution to (\ref{equi}), namely a shrinking soliton. Namely a critical point to $\mathcal{E}(\Omega)$ must be a shrinking soliton to the Gauss curvature flow. Moreover, the converse is also true.
\end{proposition}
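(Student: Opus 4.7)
The plan is to compute the constrained first variation of $\mathcal{E}$ and apply Lagrange multipliers. Differentiating \eqref{e4} at $\eta=0$ yields
\[
\left.\frac{d}{d\eta}\mathcal{E}(\Omega_\eta)\right|_{0}=\frac{1}{\omega_n}\int_{\mathbb S^n}\frac{\rho'}{u}\,d\theta(x),\qquad \rho'\doteqdot\left.\partial_\eta v_\eta\right|_{0}.
\]
Using the standard formula $\delta V(\Omega)=\int_{\mathbb S^n}\delta v\,\det(A_v)\,d\theta$ (which follows from $d\sigma=\det(A_v)\,d\theta$ under the Gauss map), the volume constraint \eqref{e2} linearizes at $\eta=0$ to $\int_{\mathbb S^n}\rho'\det(A_u)\,d\theta=0$, and \eqref{e3} linearizes to $\int_{\mathbb S^n}x_j\,\rho'/u^2\,d\theta=0$ for each $j=1,\ldots,n+1$. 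Thus $u$ is a critical point precisely when the functional $\rho'\mapsto\int\rho'/u\,d\theta$ vanishes on the joint kernel of these $n+2$ linear constraints.

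If $u$ is such a critical point, the Lagrange multiplier theorem yields constants $\lambda,\mu_1,\ldots,\mu_{n+1}$ with
\[
\frac{1}{\omega_n\,u(x)}=\lambda\,\det(A_u)+\sum_{j=1}^{n+1}\mu_j\,\frac{x_j}{u^2},
\]
and I would pin down these multipliers by pairing with $u$ and with each $x_k$. Multiplying through by $u$ and integrating, the identities $\int u\det(A_u)\,d\theta=(n+1)V(\Omega)=\omega_n$ and \eqref{e1} force $\lambda=1/\omega_n$. Multiplying instead by $x_k$ and integrating, the Minkowski identity $\int_{\mathbb S^n}x_k\det(A_u)\,d\theta=\int_M\nu\,d\sigma=0$ together with \eqref{e1} collapses the right-hand side to $\sum_j\mu_j\,M_{jk}$, where $M_{jk}\doteqdot\int x_jx_k/u^2\,d\theta$. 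Since $c^{\top}Mc=\int|c\cdot x|^2/u^2\,d\theta>0$ for $c\neq 0$, $M$ is positive definite and hence $\mu_j=0$ for every $j$. The critical-point equation therefore collapses to $u\det(A_u)=1$, which is precisely \eqref{equi}.

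The converse is immediate. If $u\det(A_u)=1$, integrating gives $(n+1)V(\Omega)=\omega_n$, so the volume normalization is automatic; moreover $\int x_j/u\,d\theta=\int x_j\det(A_u)\,d\theta=0$ by the same Minkowski identity, and Lemma \ref{e-p} then forces $u$ to be the support function relative to the entropy point, confirming \eqref{e1}. Hence for any admissible variation $\rho'$, $\int\rho'/u\,d\theta=\int\rho'\det(A_u)\,d\theta=0$ by the linearized volume constraint, so $u$ is indeed a critical point of $\mathcal{E}$ under the constraint. The principal subtlety in executing the plan is justifying the Lagrange multiplier step in function space, i.e., verifying that the $n+2$ linearized constraints have surjective joint differential; this is mild, since \eqref{e3} plays the role of a gauge condition parametrizing the $(n+1)$-dimensional family of translates of $\Omega$, while \eqref{e2} is a single nontrivial constraint already detected by the test variation $\rho'\equiv 1$.
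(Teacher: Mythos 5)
Your proof is correct and follows essentially the same route as the paper: compute the first variation, use the Minkowski identity $\int_{\mathbb S^n} x_j\det(A_u)\,d\theta=0$ together with (\ref{e1}), and apply a Lagrange multiplier to conclude $u\det(A_u)=1$. The only cosmetic difference is that you introduce explicit multipliers $\mu_j$ for the gauge conditions (\ref{e3}) and then eliminate them via positive definiteness of the Gram matrix $\int x_jx_k/u^2$, whereas the paper restricts variations directly to the orthogonal complement $\mathcal N_u^\perp$ and works with the single multiplier $\lambda$; these are equivalent bookkeepings of the same argument.
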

\begin{proof}
Differentiate (\ref{e2}) and (\ref{e4}) in $\eta$ and then set $\eta=0$. Applying the Lagrangian multiplier method, at any critical point $u$,
there exists a  $\lambda\in \mathbb R$, such that (in view of (\ref{e2}))
\begin{eqnarray}\label{e5} \int_{\mathbb S^n} \rho^{'}\det(A_u)=\lambda\int_{\mathbb S^n} \frac{\rho^{'}}{u},  \quad \forall \rho^{'}, \mbox{with} \int_{\mathbb S^n} \frac{\rho^{'}x_j}{u^2}=0, \forall j=1,\cdots, n+1. \end{eqnarray}
Here we have used that the $\frac{\partial \det(A_u)}{\partial A_{ij}}\bar{\nabla}_i \bar{\nabla}_j$ is self-adjoint.
Let $\mathcal{N}_u=\operatorname{span}\{\frac{x_j}{u}, j=1,\cdots, n+1\}$. Note that
$$
\int_{\mathbb S^n} \det(A_u) x_j=\int_{\partial \Omega} \langle \nu, e_j\rangle =0.
$$
Since both $u(\det(A_u)-\frac{\lambda}{u})$ and
$\frac{\rho^{'}}{u}$ belong to $\mathcal{N}_u^\perp$ and  $\frac{\rho^{'}}{u}$ is arbitrary in $\mathcal{N}_u^\perp$ and $u>0$, we must have
\begin{equation}\label{e6}
\det(A_u)=\frac{\lambda}{u}.\end{equation}
As $V(\Omega)=V(B(1))$, we conclude that $\lambda=1$. To check the converse, from (\ref{e2}) we conclude that
$$
\int_{\mathbb S^n} \rho' \det(A_u)=0,
$$
which readily implies that $\int_{\mathbb S^n} \frac{\rho'}{u}=0$. Note that (\ref{e3}) holds automatically for $u$ with $u=K$. Namely for the soliton, the origin is the entropy point.
\end{proof}

The next result gives a lower estimate on the the volume of $\Omega^*_0$, the dual of $\Omega$ with respect to the origin when $\Omega$ (more precisely $u$,  the support function with respect to the origin) is a soliton of the Gauss curvature flow.

\begin{proposition}\label{dualvol-low} Assume $u$ is a soliton with associated body $\Omega$ (namely  $u=K$, with enclosed volume being the one of the unit ball). Then the following holds.

 \begin{itemize} \item[(i)] The origin is the entropy point of $\Omega$;

 \item[(ii)] The volume of $\Omega^*_0$ satisfies
\begin{equation}\label{lower-dual}
V(\Omega^*_0)\ge V(B(1)).
\end{equation}
\end{itemize}

In particular, if the origin is the Santal\'o point of $\Omega$ then $\Omega=B(1)$.
\end{proposition}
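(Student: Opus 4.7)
My plan is to treat (i) and (ii) separately and then deduce the final claim by combining (ii) with the Blaschke-Santal\'o inequality. The entropy point condition from Lemma \ref{e-p} is purely first-order, namely $\int_{\mathbb{S}^n} x_j/u_e\, d\theta=0$; the soliton relation $u=K$ turns this into a divergence-theorem identity. The bulk of the work is in (ii), where the key idea is to use $u=K$ to rewrite the dual volume as a surface integral on $M$, and then estimate it via the Gauss--Bonnet identity $\int_M K\, d\sigma=\omega_n$ (valid for any convex closed hypersurface in $\mathbb{R}^{n+1}$) together with the classical isoperimetric inequality.

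For (i), I would first note that $u=K>0$ on $\mathbb{S}^n$ for a smooth strictly convex soliton, so the origin lies in $\operatorname{Int}(\Omega)$ and $u=u_0$ is an admissible support function. The Gauss map change of variable $d\theta=K\,d\sigma$ together with $u=K$ gives
\[
\int_{\mathbb{S}^n}\frac{x_j}{u(x)}\,d\theta(x)=\int_M \nu_j(y)\,d\sigma(y)=0
\]
by the divergence theorem applied to the constant field $e_j$. Since $F(t)$ from Lemma \ref{entropy-ex} is strictly concave and the entropy point is its unique critical point (Lemma \ref{e-p}), the origin is that point. For (ii), the same change of variable together with $u=K$ yields
\[
V(\Omega^*_0)=\frac{1}{n+1}\int_{\mathbb{S}^n}\frac{d\theta}{u^{n+1}}=\frac{1}{n+1}\int_M \frac{d\sigma}{K^n}.
\]
The function $K\mapsto K^{-n}$ is convex on $(0,\infty)$, so Jensen's inequality applied to the probability measure $d\sigma/S$ (with $S=\operatorname{Area}(M)$) combined with $\int_M K\,d\sigma=\omega_n$ gives $\int_M K^{-n}\,d\sigma \ge S^{n+1}/\omega_n^n$. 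Since $V(\Omega)=V(B(1))$, the classical isoperimetric inequality supplies $S\ge\omega_n$, whence $\int_M K^{-n}\,d\sigma\ge\omega_n$ and $V(\Omega^*_0)\ge V(B(1))$.

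For the final assertion, if the origin is the Santal\'o point then Blaschke-Santal\'o together with $V(\Omega)=V(B(1))$ gives $V(\Omega^*_0)=V(\Omega^*_s)\le V(B(1))$; combined with (ii) equality holds throughout. The equality case of Jensen forces $K$ to be constant on $M$, the equality case of the isoperimetric inequality forces $\Omega$ to be a ball, and these together with $u=K$ and the volume normalization identify $\Omega=B(1)$. The most delicate point is (ii): a naive Jensen applied directly to $\log u$ as in the proof of Proposition \ref{key} only yields $V(\Omega^*_0)\ge V(B(1))e^{-(n+1)\mathcal{E}(\Omega)}$, which is \emph{weaker} than $V(\Omega^*_0)\ge V(B(1))$ since $\mathcal{E}(\Omega)\ge 0$. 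Using the soliton identity $u=K$ to convert the support-function integral on $\mathbb{S}^n$ into a surface integral on $M$ is precisely what allows Gauss--Bonnet (intrinsic information about $M$) and the isoperimetric inequality (encoding the volume constraint) to enter, and this is the essential mechanism giving the sharp bound with equality exactly at $\Omega=B(1)$.
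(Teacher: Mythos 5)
Your argument is correct. Part (i) is essentially the paper's proof: both compute $\int_{\mathbb{S}^n} x_j/u\,d\theta = \int_M \nu_j\,d\sigma = 0$ using $u=K$ and the change of variable $d\theta = K\,d\sigma$, then invoke the uniqueness of the critical point of the strictly concave functional $F$.

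Part (ii) follows a genuinely different route. The paper works entirely on $\mathbb{S}^n$: it integrates the divergence-free field $X/|X|^{n+1}$ over $\partial\Omega$ to get $\aint_{\mathbb{S}^n} u/(K|X|^{n+1})\,d\theta = 1$, then uses the soliton relation $u=K$ together with the elementary pointwise bound $u \le |X|$ (from $|X|^2 = u^2 + |\bar{\nabla}u|^2$) to compare $1/u^{n+1}$ with $1/|X|^{n+1}$. You instead push the integral onto $M$ via $\int_{\mathbb{S}^n} u^{-(n+1)}\,d\theta = \int_M K^{-n}\,d\sigma$ and then chain together Jensen's inequality for the convex function $t\mapsto t^{-n}$, the total curvature identity $\int_M K\,d\sigma = \omega_n$ (the degree formula for the Gauss map — not really "Gauss–Bonnet" in dimensions $n>2$, though the formula is correct), and the classical isoperimetric inequality $S\ge\omega_n$. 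Both computations are correct; yours is precisely the alternative that the paper's remark following the proposition alludes to ("One can also prove (\ref{lower-dual}) using the isoperimetric inequality"), and you have filled it in properly. The paper's route is more self-contained and makes the rigidity transparent ($|X|=u$ and $\bar{\nabla}u=0$ directly), while your route imports two classical facts and localizes rigidity in their equality cases ($K$ constant from Jensen, $\Omega$ a round ball from isoperimetry), after which the soliton equation pins down $\Omega=B(1)$ exactly as you say.
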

\begin{proof} Observe that for any $1\le j\le n+1$
$$
0=\int_M \langle \nu(z), e_j\rangle d\sigma= \int_{\mathbb{S}^n} \frac{x_j}{K}\, d\theta(x),
$$
which implies that
$$
\int_{\mathbb{S}^n} \frac{x_j}{u}\, d\theta(x) =0.
$$
This implies that the origin is the entropy point.

Let $X(x)=u(x) \, x+\bar{\nabla}u(x)$ be the position vector of $M_t$. Observe that for any support function $u$ of a convex body
\begin{eqnarray*}
\frac{1}{\omega_n}\int_{\mathbb S^n} \frac{u}{K |X|^{n+1}}\, d\theta(x)&=&\frac{1}{\omega_n}\int_{\partial \Omega}\frac{\langle X, \nu\rangle}{|X|^{n+1}}d\sigma \\
&=& \frac{1}{\omega_n}\int_{\partial B(\epsilon)} \frac{1}{\epsilon^{n}} d\sigma \\
&=& 1.
\end{eqnarray*}
Here we have used that $\operatorname{div}( \frac{X}{|X|^{n+1}})=0$.
The claimed lower estimate on the dual volume  follows as
\begin{eqnarray*}
\frac{V(\Omega^*_0)}{\omega_n}&=&\frac{1}{n+1}\aint_{\mathbb{S}^n} \frac{1}{u^{n+1}}\\
&\ge& \frac{1}{n+1}\aint_{\mathbb S^n} \frac{u}{K |X|^{n+1}}\\
&=&\frac{1}{n+1}.
\end{eqnarray*}
The last statement follows, since when  the origin is the Santal\'o point, $V(\Omega^*_0)\le V(B(1))$ by the Blaschke-Santal\'o inequality, hence  the equality holds in above estimates.  In particular, it implies  that $|X|=u$ and $\bar{\nabla}u=0$, namely $u$ is a constant.
\end{proof}

\begin{remark} One can also prove (\ref{lower-dual}) using the isoperimetric inequality:
$\aint_{\mathbb{S}^n} \frac{1}{K}\, d\theta(x)\ge 1$.
\end{remark}

Relating to Proposition \ref{two-entropy} and the normalized Gauss curvature flow (\ref{gauss-nor}),  Chow \cite{Chow} proved that $\mathcal{E}_C(\Omega_t)$ is monotone non-increasing along the flow. The following theorem is of fundamental importance to the later discussions.

\begin{theorem}\label{mono1} Along the flow (\ref{gauss-nor}) the entropy $\mathcal{E}(\Omega_t)$ is monotone non-increasing. Moreover for any $t_1\le t_0$
\begin{equation}\label{eq-entropy-de-speed}
\mathcal{E}(\Omega_{t_0})-\mathcal{E}(\Omega_{t_1})\le \int_{t_1}^{t_0}\left(\mathcal{E}(\Omega_t)-\mathcal{E}_C(\Omega_t)\right)\, dt \le 0.
\end{equation}
\end{theorem}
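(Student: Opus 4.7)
\medskip
\noindent\emph{Proof proposal.} My plan is to differentiate $\mathcal{E}(\Omega_t)$ in time and exploit the variational characterization of the entropy: since at each $t$ the supremum is attained at a unique interior point $z_e(t)$ (Lemma \ref{e-p}), the critical-point identity (\ref{1stvar}) will kill every term arising from the motion of $z_e(t)$. This is essentially an envelope-theorem argument.

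I would first fix an ambient origin and let $u(x,t)$ be the support function of $\Omega_t$ with respect to it; by (\ref{gauss-nor}), $\partial_t u = u + \Psi = u - K$. For any $z_0$, set $u_{z_0}(x,t) := u(x,t) - \langle z_0, x\rangle$. The identity $\bar{\nabla}_i\bar{\nabla}_j \langle z_0, x\rangle + \langle z_0,x\rangle\, \bar g_{ij} = 0$ shows that $A_{u_{z_0}} = A_u$, so $K$ does not depend on $z_0$. Writing $u_e(x,t) = u(x,t) - \langle z_e(t), x\rangle$ and differentiating $\mathcal{E}(\Omega_t) = \frac{1}{\omega_n}\int_{\mathbb{S}^n}\log u_e \, d\theta$, I expect to obtain
\begin{eqnarray*}
\frac{d}{dt}\mathcal{E}(\Omega_t) &=& \frac{1}{\omega_n}\int_{\mathbb{S}^n}\frac{u_e+\langle z_e,x\rangle+\Psi - \langle z_e'(t), x\rangle}{u_e}\, d\theta(x) \\
&=& 1 + \frac{1}{\omega_n}\int_{\mathbb{S}^n}\frac{\Psi}{u_e}\, d\theta \;=\; 1 - \frac{1}{\omega_n}\int_{\mathbb{S}^n}\frac{K}{u_e}\, d\theta,
\end{eqnarray*}
the cross terms $\langle z_e,x\rangle/u_e$ and $\langle z_e'(t),x\rangle/u_e$ both integrating to zero against $d\theta$ by (\ref{1stvar}).

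Next, Jensen's inequality applied to $\exp$ yields
$$\frac{1}{\omega_n}\int_{\mathbb{S}^n}\frac{K}{u_e}\, d\theta \;\ge\; \exp\!\Bigl(\frac{1}{\omega_n}\int_{\mathbb{S}^n}\log\frac{K}{u_e}\, d\theta\Bigr) \;=\; \exp\bigl(\mathcal{E}_C(\Omega_t)-\mathcal{E}(\Omega_t)\bigr),$$
where I use $\int_{\mathbb{S}^n}\log K\, d\theta = \omega_n\mathcal{E}_C(\Omega_t)$ (the opening identity in the proof of Proposition \ref{two-entropy}). Combined with the elementary inequality $1 - e^x \le -x$, this gives
$$\frac{d}{dt}\mathcal{E}(\Omega_t) \;\le\; 1 - e^{\mathcal{E}_C - \mathcal{E}} \;\le\; \mathcal{E}(\Omega_t) - \mathcal{E}_C(\Omega_t) \;\le\; 0,$$
the last inequality being (\ref{chow-lb}). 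Integration over $[t_1,t_0]$ then delivers (\ref{eq-entropy-de-speed}).

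The step I expect to require the most care is justifying the chain rule through $z_e(t)$. This should follow from the implicit function theorem applied to the critical-point equation $\nabla_{z_0}G(z_e(t),t)=0$, where $G(z_0,t):=\frac{1}{\omega_n}\int_{\mathbb{S}^n}\log u_{z_0}(\cdot,t)\, d\theta$ is smooth in $(z_0,t)$ and strictly concave in $z_0$ (Lemma \ref{entropy-ex}). Alternatively, one avoids differentiating $z_e(t)$ entirely via a genuine envelope argument: the inequality $\mathcal{E}(\Omega_{t_0+h}) \ge G(z_e(t_0), t_0+h)$, together with equality at $h=0$, forces the two-sided derivative of $\mathcal{E}(\Omega_t)$ at $t_0$ to coincide with $\partial_t G(z_e(t_0),t_0)$, which produces the same formula without any regularity hypothesis on $z_e(\cdot)$.
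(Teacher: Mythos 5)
Your proposal is correct and matches the paper's argument in substance: the paper likewise evaluates the derivative against the fixed competitor obtained from $z_e(t_0)$ (carried along as an exact solution of (\ref{gauss-nor}) via the factor $e^{t-t_0}$), obtains $1-\aint_{\mathbb{S}^n}K/u_e$, and deduces monotonicity from the one-sided comparison $\mathcal{E}(\Omega_t)\ge\aint\log u_{e(t)}(\cdot,t)$, exactly your envelope argument. The only cosmetic difference is that the paper bounds $\aint(1-K/u_e)$ directly by $1-x\le-\log x$ rather than going through Jensen and $1-e^x\le-x$; both yield the same estimate $\mathcal{E}(\Omega_t)-\mathcal{E}_C(\Omega_t)$.
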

\begin{proof} At some point $t_0$ assume that $\mathcal{E}(\Omega_{t_0})=\frac{1}{\omega_n}\int_{\bfS^n} \log u_{e(t_0)}$, where $u_{e(t_0)}$ is the support function with respect to a unique entropy point $z_e(t_0) \in \operatorname{Int}(\Omega)$. Hence for $t<t_0$ but very close to $t_0$, one still has that
$u_{e(t)}(x, t)\doteqdot u(x, t)-\langle \exp{(t-t_0)}\, z_e(t_0), x\rangle>0$. If $u(x, t)$ is a solution to (\ref{gauss-nor}), so is $u_{e(t)}(x, t)$. Now calculate
\begin{eqnarray*}
\frac{d}{dt}  \aint_{\bfS^n} \log u_{e(t)}(x, t)&=&\aint_{\bfS^n} \frac{u_{e(t)}-K}{u_{e(t)}}\\
&=& 1-\aint_{\bfS^n} \frac{K}{u_{e(t)}}\\
&=&- \aint_{\bfS^n}\left(\sqrt{\frac{K}{u_{e(t)}}}-\sqrt{\frac{u_{e(t)}}{K}}\right)^2\\
 &\le& 0.
\end{eqnarray*}
This implies that there exists $\delta>0$ and for $t\in (t_0-\delta,  t_0)$, $$\mathcal{E}(\Omega_t)\ge \aint_{\bfS^n} \log u_{e(t)}(x, t)\ge \aint_{\bfS^n} \log u_{e(t_0)}(x,t_0)=\mathcal{E}(\Omega_{t_0}),$$
which proves the first claim. Making use of the above calculation again we have that
$$
\mathcal{E}(\Omega_{t_0})-\mathcal{E}(\Omega_{t_1})\le \int_{t_1}^{t_0} \aint_{\mathbb{S}^n}\left( 1-\frac{K}{u_{e(t)}}\right)\, d\theta\, dt.
$$
Using  $1-x\le -\log x$ and some elementary estimates, established (\ref{eq-entropy-de-speed}) for $t_1\in (t_0-\delta, t_0)$. The continuity argument can be applied to conclude the same for all $t_1\le t_0$.
\end{proof}
The proof above is a modification of that of Firey \cite{Firey}, in which he introduced the entropy $\mathcal{E}_F(\Omega_t)=\aint_{\mathbb{S}^n} \log u(x, t)$ and showed that it is monotone non-increasing along the flow. Now we have that $\mathcal{E}_C(\Omega_t)\ge \mathcal{E}(\Omega_t)\ge \mathcal{E}_F(\Omega_t)$.

\section{$C^0$--Estimates}

Let $u(x, t)$ be a long time solution to (\ref{gauss-nor}). By translation we may assume that $z_\infty=0$.
Combining Corollary \ref{rho-e} and Theorem \ref{mono1} we have  an upper bound of $\rho_+$, hence an upper bound of $u(x, t)$. Since the volume is preserved along the normalized Gauss curvature flow, by John's lemma, it follows $\rho_{-}$ is bounded from below. The estimate of upper bound of $u(x,t)$ was proved by Hamilton first in \cite{Hamilton-gauss} using a different argument.\footnote{We were informed recently by Xujia Wang that he also obtained some similar results in his unpublished manuscript.}

The main result of this section is to establish a uniform lower bound of $u(x, t)$. Since we assume that $z_\infty $, the limit point which lies inside all $\Omega_t$ evolving by (\ref{gauss2}), is the origin, we have a solution $u(x, t)$ to (\ref{gauss-nor}) with $u(x, t)>0$ for all $(x, t)\in \mathbb{S}^n\times[0, \infty)$.

We start with a similar lower bound for the support function with respect to the Santal\'o point, which motivates the $C^0$-estimates. This is based on the following gradient estimate on a support function $u$ of a convex body:
\begin{equation}\label{gradient}
\max_{\mathbb S^n}|\bar{\nabla} u|\le \max_{\mathbb S^n} u.
\end{equation}
This gradient estimate can be proved by the following observation.
Due to the positivity of $\bar{\nabla}_i\bar{\nabla}_j u+ u\delta_{ij}$, one can conclude that at the maximum point of $|\bar{\nabla} u|^2+u^2$, $\bar{\nabla}u=0$. Hence
$\max_{\mathbb S^n}|\bar{\nabla} u|\le \max_{\mathbb S^n} u$. Geometrically this is clear since $X=\bar{\nabla}u+u\, x$ is the position vector with length square $|X|^2=|\bar{\nabla}u|^2+u^2$, which attains its maximum for some $X_0$ parallel to $x$.

As $\rho_{-}$ is bound from below, if one is willing to shift the origin, a lower bound of the support function would follow. We task is to bound the support function from below without shifting for all $t$. In this regard, the entropy point plays important role. To motivate the discussion, we first consider the Santal\'o point.
\begin{proposition}\label{santalo-le} If $u_s$ is the support function with respect to the Santal\'o point of $\Omega$, then
\begin{equation} u_s(x)\ge c(n) V(\Omega)e^{-n\mathcal{E}(\Omega)},\end{equation}
where $c(n)>0$ is a dimensional constant.
\end{proposition}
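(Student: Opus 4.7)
The plan is to combine the upper bound on $\max u_s$ coming from the entropy (via Corollary \ref{rho-e}), the gradient estimate (\ref{gradient}) for support functions, and the Blaschke-Santaló inequality, to transfer a lower bound for the average $\int 1/u_s^{n+1}$ that is too small into a contradiction with the hypothesized smallness of $\min u_s$.

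First, I would record the upper Lipschitz bound on $u_s$. Since $u_s > 0$ (the Santaló point lies in $\operatorname{Int}(\Omega)$), $\max u_s \le w_+(\Omega) \le C_n e^{\mathcal{E}(\Omega)}$ by Corollary \ref{rho-e}. Combined with (\ref{gradient}), this gives $|\bar{\nabla} u_s|_{\bar g} \le L$ with $L := C_n e^{\mathcal{E}(\Omega)}$. Set $\delta := \min_{\mathbb{S}^n} u_s = u_s(x_0)$; the case $\delta \ge L$ is trivial, so assume $\delta < L$. By the Lipschitz bound along geodesics emanating from $x_0$,
\begin{equation*}
u_s(x) \le \delta + L\, d(x, x_0) \qquad \text{for all } x \in \mathbb{S}^n,
\end{equation*}
where $d$ is the spherical distance.

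Next, I would plug this pointwise upper bound into the dual-volume integral. On the geodesic ball $B(x_0, \pi/2)$, standard polar coordinates give
\begin{equation*}
\int_{\mathbb{S}^n} \frac{1}{u_s^{n+1}}\, d\theta \ge \omega_{n-1} \int_0^{\pi/2} \frac{(\sin s)^{n-1}}{(\delta + L s)^{n+1}}\, ds,
\end{equation*}
and substituting $s = \delta t/L$ (together with $\sin s \ge \frac{2s}{\pi}$ on $[0,\pi/2]$) converts the right side into $\frac{c_n}{\delta L^n} \int_0^{\pi L/(2\delta)} \frac{t^{n-1}}{(1+t)^{n+1}}\, dt$; since $\delta < L$, this last integral exceeds a purely dimensional constant, yielding
\begin{equation*}
\int_{\mathbb{S}^n} \frac{1}{u_s^{n+1}}\, d\theta \ge \frac{c_n'}{\delta\, L^n}.
\end{equation*}

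Finally, the Blaschke-Santaló inequality (\ref{BS}) together with the formula $V(\Omega^*_s) = \frac{1}{n+1}\int_{\mathbb{S}^n} u_s^{-(n+1)}\, d\theta$ gives the matching upper bound
\begin{equation*}
\int_{\mathbb{S}^n} \frac{1}{u_s^{n+1}}\, d\theta \le \frac{(n+1)\, V(B(1))^2}{V(\Omega)}.
\end{equation*}
Comparing the two displays, $\delta \ge c(n)\, V(\Omega) L^{-n} \ge c(n)\, V(\Omega)\, e^{-n\mathcal{E}(\Omega)}$, which is exactly the claim. The only real care is in step two, where one must verify that the substitution yields a constant independent of $\delta$ and $L$ — this is where the assumption $\delta < L$ (equivalently, working on a ball that is not too small) is used — but this is a routine one-variable computation rather than a genuine obstacle.
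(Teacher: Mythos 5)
Your argument is correct and follows essentially the same route as the paper: both proofs combine the Blaschke--Santal\'o upper bound on $V(\Omega_s^*)=\frac1{n+1}\int_{\mathbb S^n}u_s^{-(n+1)}\,d\theta$ with the gradient estimate (\ref{gradient}) and the entropy upper bound on $\rho_+$ from Corollary~\ref{rho-e}, localizing near the minimum point of $u_s$ to force $\int u_s^{-(n+1)}$ to be large if $\min u_s$ is too small. The only cosmetic difference is that the paper simply restricts to a geodesic ball of radius comparable to $m/\rho_+$ on which $u_s\le Cm$, whereas you carry the full Lipschitz estimate $u_s\le\delta+Ls$ through a polar-coordinate integral and change variables; the resulting bound $\min u_s\gtrsim V(\Omega)\rho_+^{-n}\gtrsim V(\Omega)e^{-n\mathcal E(\Omega)}$ is the same.
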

\begin{proof}   By the Blaschke-Santal\'o inequality,
\begin{eqnarray*} V(\Omega_s^*)=\frac1{n+1}\int_{\mathbb S^n} \frac1{u^{n+1}_s}\le \frac{V^2(B(1))}{V(\Omega)}.\end{eqnarray*}
Let $m=u_s(x_0)$ be the minimum value of
$u_s$ (attained at some $x_0$).  By (\ref{gradient}),
$\max_{\mathbb S^n}|\bar{\nabla} u_s|\le \max_{\mathbb S^n} u_s\le 2\rho_{+}$. Therefore, in a geodesic ball $\bar{B}_{x_0}(r)$ (inside $\mathbb{S}^n$) with $r= \frac m{\rho_{+}}$, we have
$u_{s}(x)\le 2m$. In turn,
\begin{eqnarray*} \frac{V^2(B(1))}{V(\Omega)}\ge \frac1{n+1}\int_{\mathbb S^n} \frac1{u^{n+1}_s}\ge \tilde C_n m^{-(n+1)}r^n=\tilde C_n \frac{\rho_{+}^{-n}}m.\end{eqnarray*}  The result now follows from  Corollary \ref{rho-e}.\end{proof}

Next is the main result of this section, which is based on establishing a similar result for $u_{e(t)}$ where $e(t)$ is the entropy point of the convex body $\Omega_t$.

\begin{theorem}\label{C0}
Suppose $u(x, t)>0$ is the solution of (\ref{gauss-nor}) with initial data $u(x, 0)=u_0(x)>0$, where $u_0(x)$ is the support function of $\Omega_0$ with $V(\Omega_0)=V(B(1))$ and $\mathcal{E}(\Omega_0)\le A $. Then there is a
positive $\epsilon =\epsilon(n,\Omega_0)>0$ such that
\begin{equation}\label{eq-c01} u(x,t)\ge \epsilon, \quad \forall t\ge 0, \forall x\in \mathbb S^n.\end{equation}
 \end{theorem}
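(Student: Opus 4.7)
The plan is to combine the entropy monotonicity of Theorem~\ref{mono1} with an analogue of Proposition~\ref{santalo-le} at the entropy point $z_{e(t)}$, and then to use a contradiction/compactness argument to transfer the resulting lower bound from the entropy point to the origin. First, Theorem~\ref{mono1} yields $\mathcal{E}(\Omega_t)\le A:=\mathcal{E}(\Omega_0)$ for all $t\ge 0$, and Corollary~\ref{rho-e} with the preserved volume $V(\Omega_t)=V(B(1))$ gives uniform bounds on the diameter and on the inner radius of $\Omega_t$. In particular, by \eqref{gradient} the support function of $\Omega_t$ with respect to any interior point is Lipschitz with a uniform constant.

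Next I would establish an entropy-point analogue of Proposition~\ref{santalo-le}: a uniform bound $u_{e(t)}(x)\ge \epsilon_1(n,A)>0$, where $u_{e(t)}$ is the support function of $\Omega_t$ relative to the (interior) entropy point $z_{e(t)}$ furnished by Lemma~\ref{e-p}. If $m(t)=\min_x u_{e(t)}$ is attained at $x_0$, the gradient estimate forces $u_{e(t)}\le 2m(t)$ on a geodesic disk $B_{x_0}(m(t)/\rho_+)\subset\mathbb{S}^n$, so that $\int_{\mathbb{S}^n} u_{e(t)}^{-(n+1)}\,d\theta\ge c_n/(m(t)\,\rho_+^n)$. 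The matching upper bound on this integral should come from combining the first-moment identity $\int x_j/u_{e(t)}\,d\theta=0$ of Lemma~\ref{e-p} with the entropy bound $\mathcal{E}(\Omega_t)\le A$ via the Jensen-type control of Proposition~\ref{key} (one can also try to compare $z_{e(t)}$ with the Santal\'o point $z_s(t)$, using that the dual volume $V(\Omega^*_{z_0})$ is convex in $z_0$). Together with the lower bound above this gives $m(t)\ge\epsilon_1$ uniformly in~$t$.

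The main obstacle is the final step, because $u(x,t)=u_{e(t)}(x)+\langle z_{e(t)},x\rangle\ge \epsilon_1-|z_{e(t)}|$, and $|z_{e(t)}|$ is a priori only bounded by the diameter of $\Omega_t$. I would argue by contradiction and compactness: if $u(x_k,t_k)\to 0$ along some sequence, by Blaschke selection and the uniform diameter bound extract a Hausdorff limit $\Omega_{t_k}\to\Omega_\infty$ (passing to a subsequence). Then $0\in\Omega_\infty$ (since origin is in every $\Omega_{t_k}$ as the contracting point of the un-normalized flow), yet $0\in\partial\Omega_\infty$. The entropy-point bound from the previous paragraph, together with upper semicontinuity of $\mathcal{E}$ under Hausdorff limits, implies $z_{e(\Omega_\infty)}\in\operatorname{Int}(\Omega_\infty)$, so in particular $z_{e(\Omega_\infty)}\ne 0$. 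To close, use the monotonicity formula \eqref{eq-entropy-de-speed} to further arrange, by passing to a subsequence if needed, that $\mathcal{E}_C(\Omega_{t_k})-\mathcal{E}(\Omega_{t_k})\to 0$; the equality case of Proposition~\ref{two-entropy} then identifies $\Omega_\infty$ as a shrinking soliton satisfying $K=u_{e(\Omega_\infty)}$. Since every $\Omega_{t_k}$ is obtained from the un-normalized flow by dilation about the fixed origin, $\Omega_\infty$ inherits that origin as its own contracting point, and Proposition~\ref{dualvol-low} forces $z_{e(\Omega_\infty)}=0$, contradicting $0\in\partial\Omega_\infty$.
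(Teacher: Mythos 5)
The paper's proof hinges on a step you do not have: it first shows (Lemma~\ref{cov-e}) that the Firey entropy at the origin satisfies $\aint_{\mathbb S^n}\log u(x,t)\ge\mathcal{E}_\infty$, so that $\mathcal{E}(\Omega_t)-\aint\log u(x,t)\to 0$; the quadratic concavity of $p\mapsto\aint\log u_p$ (Lemma~\ref{e-concave}) then forces $e(\Omega_t)\to 0$, and Lemma~\ref{e-cont} finishes. Lemma~\ref{cov-e} is delicate --- it requires the uniqueness of the positive long-time solution (Proposition~\ref{uniqueness-pos}) and an approximation by translated solutions $u^T$ sent to $T\to\infty$. Your proposal never produces any lower bound on $\aint\log u(x,t)$, and without one there is nothing preventing the origin from drifting to $\partial\Omega_t$; that is precisely the quantity your contradiction argument implicitly needs to control.

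The contradiction/compactness argument you substitute in the third paragraph has concrete gaps. To invoke the equality case of Proposition~\ref{two-entropy} (that $\mathcal{E}_C=\mathcal{E}$ forces $K=u_e$) you must speak of the Gauss curvature of the Hausdorff limit $\Omega_\infty$, but no regularity of $\Omega_\infty$ is available at this stage: the $C^2$-estimates (Theorems~\ref{C2-u}, \ref{thm-lower-k}, \ref{c2-sharp}) are proved \emph{after}, and using, Theorem~\ref{C0}. In particular $\mathcal{E}_C(\Omega_{t_k})\to\mathcal{E}_C(\Omega_\infty)$ is not justified, since $\mathcal{E}_C$ involves $K\log K$. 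The assertion that $\Omega_\infty$ ``inherits the origin as its own contracting point'' is also not justified: the Hausdorff limit of the normalized bodies need not have origin as the limit of its own unnormalized flow. Finally, in the second paragraph, the ``matching upper bound'' on $\int u_{e(t)}^{-(n+1)}$ is not established --- Blaschke--Santal\'o controls the dual volume only at the Santal\'o point, not the entropy point --- though this particular step can simply be replaced by the paper's Lemma~\ref{e-cont}, which obtains $\operatorname{dist}(e_\Omega,\partial\Omega)\ge\delta$ on $\Gamma^A_B$ by Blaschke selection rather than by a Santal\'o-type computation.
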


The proof is built upon several lemmas. For each bounded closed convex body $\Omega$, we denote $e(\Omega)$  the unique entropy point of $\Omega$. For each $p\in \Omega$, recall that $u_p$ is the support function of $\Omega$ with respect to $p$.

\begin{lemma}\label{e-concave}  For each $\Omega$, there is $D>0$ depending only on $n$ and the diameter of $\Omega$ such that for any $p\in \Omega$,
\begin{equation}\label{E-est1} \frac{1}{\omega_n}\int_{\mathbb S^n} \log u_p \le \mathcal{E}(\Omega)-D \operatorname{dist}^2(p,e(\Omega)).\end{equation}
\end{lemma}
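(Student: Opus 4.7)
\emph{Plan.} The idea is to realize the left-hand side as a strictly concave function of the reference point $p$ and to extract a quantitative quadratic upper bound from a uniform Hessian estimate depending only on $n$ and $\operatorname{diam}(\Omega)$. After translating $\Omega$ so that $e(\Omega)$ is the origin, I write every support function in the form $u_p(x) = u_e(x) - \langle p,x\rangle$ and define
$$
G(p) = \frac{1}{\omega_n}\int_{\mathbb S^n} \log u_p(x)\, d\theta(x)
$$
on the open convex set $U = \operatorname{Int}(\Omega)$, where each $u_p$ is strictly positive. By Lemma \ref{e-p}, $G$ attains its maximum $\mathcal{E}(\Omega)$ at $p=0$, and the critical point identity (\ref{1stvar}) translates to $\nabla G(0) = 0$.

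\emph{Hessian bound.} Differentiating under the integral twice yields
$$
\frac{\partial^2 G}{\partial p_i \partial p_j}(p) = -\frac{1}{\omega_n}\int_{\mathbb S^n}\frac{x_i x_j}{u_p^2(x)}\, d\theta(x).
$$
For any $p\in\Omega$ I use $u_p(x)=\max_{z\in\Omega}\langle z-p,x\rangle \le \operatorname{diam}(\Omega)$ for every $x\in\mathbb S^n$, together with the spherical identity $\int_{\mathbb S^n}\langle v,x\rangle^2\, d\theta = \omega_n|v|^2/(n+1)$, to obtain the matrix inequality
$$
\operatorname{Hess} G(p) \le -\frac{1}{(n+1)\operatorname{diam}^2(\Omega)}\, I
$$
on all of $U$. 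Note that the right-hand side is a strictly negative constant multiple of the identity, uniform in $p$, and depends only on $n$ and $\operatorname{diam}(\Omega)$.

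\emph{Conclusion and the boundary case.} For $p$ in the interior of $\Omega$, the straight segment from $0$ to $p$ lies entirely in $U$, so the second-order Taylor formula with integral remainder gives
$$
G(p) = G(0) + \int_0^1 (1-s)\, p^{\top}\operatorname{Hess} G(sp)\, p\, ds \le \mathcal{E}(\Omega) - D\, |p|^2,
$$
with $D = \frac{1}{2(n+1)\operatorname{diam}^2(\Omega)}$, which is precisely the claim after translating back. For $p\in\partial\Omega$, I would approximate by $p_k = (1-1/k)p \to p$ and pass to the limit; splitting $\mathbb S^n$ into the two hemispheres $\{\langle p,x\rangle\ge 0\}$ and $\{\langle p,x\rangle<0\}$, the integrand $\log u_{p_k}(x)$ is monotone in $1/k$ on each piece, so monotone convergence gives $G(p_k)\to G(p)\in[-\infty,+\infty)$, and letting $k\to\infty$ in $G(p_k)\le\mathcal{E}(\Omega)-D|p_k|^2$ closes the argument (trivially if $G(p) = -\infty$). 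The proof is essentially a quantified concavity computation; the only point requiring care is the boundary limit, but the hemisphere splitting removes any real obstacle.
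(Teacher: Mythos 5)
Your proposal is correct and follows essentially the same route as the paper: translate the entropy point to the origin, observe that the map $p\mapsto \frac{1}{\omega_n}\int_{\mathbb S^n}\log u_p$ is smooth and concave on $\operatorname{Int}(\Omega)$ with a critical point at $p=0$, compute the Hessian explicitly as $-\frac{1}{\omega_n}\int_{\mathbb S^n}\frac{x_ix_j}{u_p^2}\,d\theta$, bound $u_p\le\operatorname{diam}(\Omega)$ to get a uniform negative definite Hessian bound, and conclude by Taylor's formula. Your version is a little more explicit than the paper's (you record the constant $D=\frac{1}{2(n+1)\operatorname{diam}^2(\Omega)}$ and you supply the boundary limit via the hemisphere monotone-convergence argument, a point the paper leaves implicit), but the mechanism is identical.
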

\begin{proof} Since $u_p$ is bounded from above by $2\rho_{+}$, $\frac1{u_p}$ is bounded from below. As in Lemma \ref{entropy-ex}, consider $F(t)=\frac{1}{\omega_n}\int_{\bfS^n} \log u_p=\frac{1}{\omega_n}\int_{\bfS^n} \log \left(u_e+\langle x, e-p\rangle \right)$ with $t=e-p$. The direct calculation shows that
$$
\frac{\partial^2\,  F(t)}{\partial t_i \partial t_j}=-\int_{\bfS^n} \frac{x_i x_j}{\left(u_e+\langle x, t\rangle\right)^2}\, d\theta(x).
$$
By Taylor's theorem, if write $t=|t| \, a$ with $a=\frac{e-p}{|e-p|}$ we have that
$$F(t)\le F(0)- C |t|^2 \int_{\bfS^n} \langle a, x\rangle^2\, d\theta(x).$$
Here $C$ is a constant only depending on the upper bound of $\rho_{+}$.
Now (\ref{E-est1}) follows from the fact that the integral on the right hand side is a constant depending only on $n$. \end{proof}

Note that  by Corollary \ref{rho-e}, there exists an upper bound of $\rho_+$ depending only on $A$, the upper bound of the entropy.
\medskip

For each $A>0, B>0$, consider the collection of bounded closed convex sets
\begin{equation} \Gamma^A_B=\{ \Omega \subset \mathbb R^{n+1} |\quad \mbox{$\Omega$ is a closed convex subset, $0\in \Omega$}, V(\Omega)\ge B, \mathcal{E}(\Omega)\le A.\} \end{equation}

\begin{lemma}\label{e-cont} Suppose $\Omega_k\in \Gamma^A_B$ is a sequence of convex bodies with the property that $0\in  \Omega_k, \forall k$.
Suppose $\lim_{k\to \infty} \Omega_k=\Omega_0$, then
\[ \lim_{k\to \infty} \mathcal{E}(\Omega_k)=\mathcal{E}(\Omega_0).\]
Moreover, there is $\delta(A,B,n)>0$ depending only on $n,A,B$
such that the entropy point $e_{\Omega}$ satisfies the following estimate:
\begin{equation}\label{e-est} dist(e_{\Omega}, \partial \Omega)\ge \delta(A,B,n), \quad \forall \Omega\in \Gamma^A_B.\end{equation} \end{lemma}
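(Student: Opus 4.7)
The plan is to establish the continuity statement first and then deduce the uniform interior bound by a contradiction argument that couples continuity with the quantitative concavity of Lemma \ref{e-concave}.

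\emph{Continuity.} Hausdorff convergence $\Omega_k\to\Omega_0$ is equivalent to uniform convergence of the support functions $u_k\to u_0$ on $\mathbb{S}^n$. For lower semicontinuity, pick the entropy point $e_{\Omega_0}\in\operatorname{Int}(\Omega_0)$ supplied by Lemma \ref{e-p}; for all large $k$ one has $e_{\Omega_0}\in\operatorname{Int}(\Omega_k)$, and the shifted support functions $u_k(x)-\langle e_{\Omega_0},x\rangle$ converge uniformly to the strictly positive $u_0(x)-\langle e_{\Omega_0},x\rangle$, so their logarithms converge uniformly and
\begin{equation*}
\mathcal{E}(\Omega_k)\ge \frac{1}{\omega_n}\int_{\mathbb{S}^n}\log\!\bigl(u_k(x)-\langle e_{\Omega_0},x\rangle\bigr)\,d\theta(x)\;\longrightarrow\; \mathcal{E}(\Omega_0).
\end{equation*}
For upper semicontinuity, pass to any subsequence with $\mathcal{E}(\Omega_{k_j})\to L$; Corollary \ref{rho-e} keeps the entropy points $e_{\Omega_{k_j}}$ in a bounded set, so after a further subsequence $e_{\Omega_{k_j}}\to e^\star\in\Omega_0$. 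The uniform upper bound on $\log(u_{k_j}-\langle e_{\Omega_{k_j}},\cdot\rangle)$ by a fixed $\log\operatorname{diam}$, together with pointwise convergence in the extended reals, permits the reverse Fatou lemma and yields $L\le \frac{1}{\omega_n}\int\log(u_0(x)-\langle e^\star,x\rangle)\,d\theta(x)$, which is at most $\mathcal{E}(\Omega_0)$ whether $e^\star\in\operatorname{Int}(\Omega_0)$ (directly from the definition) or $e^\star\in\partial\Omega_0$ (from the boundary-exclusion reasoning already inside the proof of Lemma \ref{e-p}).

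\emph{Interior bound.} Argue by contradiction. Suppose $\Omega_k\in\Gamma^A_B$ has $\operatorname{dist}(e_{\Omega_k},\partial\Omega_k)\to 0$. By the Blaschke selection theorem, after extracting a subsequence $\Omega_k\to\Omega_\infty$ in Hausdorff distance; since $V(\Omega_\infty)\ge B$, $\operatorname{Int}(\Omega_\infty)\ne\emptyset$ and Lemma \ref{e-p} places $e_{\Omega_\infty}$ in $\operatorname{Int}(\Omega_\infty)$. A further subsequence gives $e_{\Omega_k}\to e^\star\in\partial\Omega_\infty$, so $\operatorname{dist}(e_{\Omega_\infty},e^\star)>0$. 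Since $\mathcal{E}(\Omega_k)\le A$, Corollary \ref{rho-e} controls the diameters uniformly and makes the constant $D$ in Lemma \ref{e-concave} uniform in $k$. For $k$ large enough that $e_{\Omega_\infty}\in\operatorname{Int}(\Omega_k)$, Lemma \ref{e-concave} applied at $p=e_{\Omega_\infty}$ gives
\begin{equation*}
\frac{1}{\omega_n}\int_{\mathbb{S}^n}\log\!\bigl(u_k(x)-\langle e_{\Omega_\infty},x\rangle\bigr)\,d\theta(x)\le \mathcal{E}(\Omega_k) - D\operatorname{dist}^2(e_{\Omega_\infty},e_{\Omega_k}).
\end{equation*}
Sending $k\to\infty$, the left side converges to $\mathcal{E}(\Omega_\infty)$ (uniform log convergence, since the limit stays bounded below away from zero) and $\mathcal{E}(\Omega_k)\to\mathcal{E}(\Omega_\infty)$ by the continuity just proved. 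The inequality collapses to $0\le -D\operatorname{dist}^2(e_{\Omega_\infty},e^\star)<0$, a contradiction.

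The main obstacle I expect is the upper-semicontinuity step when $e^\star\in\partial\Omega_0$: one must confirm that the limit integral is finite and does not exceed $\mathcal{E}(\Omega_0)$. The volume lower bound $V\ge B$, via the lower entropy estimate in Proposition \ref{key}, keeps $\mathcal{E}(\Omega_k)$ above a fixed constant and so prevents the limiting integral from diverging to $-\infty$, while the first-variation/separation argument already present in the proof of Lemma \ref{e-p} provides the required upper bound by $\mathcal{E}(\Omega_0)$.
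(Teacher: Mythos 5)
Your proof is correct and follows the paper's overall architecture: lower semicontinuity by evaluating the shifted support functions at the (interior) entropy point of the limit, upper semicontinuity via a reverse-Fatou argument using the uniform diameter bound from Corollary \ref{rho-e} and the uniform entropy lower bound from Proposition \ref{key}, and the interior estimate by Blaschke selection combined with the just-established continuity. The one genuine difference is the terminal step of the contradiction argument. The paper re-applies Fatou to a subsequence to obtain $\mathcal{E}(\Omega_0)\le\frac{1}{\omega_n}\int_{\mathbb{S}^n}\log u_p\,d\theta$ with $p\in\partial\Omega_0$ and then invokes the \emph{strict} inequality in Lemma \ref{e-p} (any non-entropy reference point gives strictly smaller value). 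You instead apply the quantitative concavity of Lemma \ref{e-concave} with $\Omega=\Omega_k$ and $p=e_{\Omega_\infty}$, pass to the limit using uniform convergence of logarithms (legitimate since $u^{\Omega_\infty}_{e_{\Omega_\infty}}$ is bounded away from zero), and let the residual term $-D\operatorname{dist}^2(e_{\Omega_\infty},e^\star)<0$ produce the impossibility $0<0$. Both routes are valid and both need the uniformity of $D$ on $\Gamma^A_B$, which the paper records immediately after Lemma \ref{e-concave}. Your variant is slightly cleaner in that it avoids a second Fatou passage and makes the quantitative defect visible, whereas the paper's route uses only the qualitative uniqueness/strictness statement of Lemma \ref{e-p} and so requires one fewer auxiliary lemma.
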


 \begin{proof} By the Lemma \ref{rho-e}, $\forall \Omega\in \Gamma^A_B$,  $\rho_+(\Omega)\le C(n,A)$ for some $C(n,A)>0$.
Since the volume is bounded from below we also have  $\forall \Omega\in \Gamma^A_B$,  $\rho_-(\Omega)\ge c(n,A,B)$  for some
$c(n,A,B)>0$.  By Lemma \ref{e-p}, the entropy point of $\Omega_0$, $e_{\Omega_0}\in \Omega_0$. Therefore, when $k$ large,  $e_{\Omega_0}\in \Omega_k$.
Again by Lemma \ref{e-p},
\begin{equation}\label{lemm34-1} \mathcal{E}(\Omega_0)=\frac{1}{\omega_n} \int_{\mathbb S^n} \log u^{\Omega_0}_{e(\Omega_0)}=\lim_{k\to \infty} \frac{1}{\omega_n} \int_{\mathbb S^n} \log u^{\Omega_k}_{e(\Omega_0)}\le \lim_{k\to \infty}  \mathcal{E}(\Omega_k).\end{equation}
Here $u^{\Omega_k}_p$ denote the support function of $\Omega_k$ with respect to $p$.

On the other hand, since $u^{\Omega_k}_p \le 2\rho_+(\Omega_k)\le 2C(n,A)$ for each $p\in   \Omega_k$, $\log u^{\Omega_k}_p$ is bounded from above. As $\Omega_k\in \Gamma^A_B$, by estimate (\ref{est-basic11}) of Proposition \ref{key}
\begin{eqnarray*}
\frac{1}{\omega_n}\int_{\mathbb S^n}\log \left(\frac{u^{\Omega_k}_{e(\Omega_k)}}{2C(n,A)}\right)&\ge&  \mathcal{E}(\Omega_k)-\log (2C(n,A))\\
&\ge&  \frac{\log (\frac{B}{V(B(1))})}{n+1}-\log (2C(n,A)).\end{eqnarray*}
That is, \begin{equation}\label{u-k-c}\int_{\mathbb S^n}\left|\log \left(\frac{u^{\Omega_k}_{e(\Omega_k)}}{2C(n,A)}\right)\right|\le C, \forall k.\end{equation}
Let $p=\lim_{k\to \infty} e(\Omega_k)$. Noticing that $\log (\frac{u^{\Omega_k}_{e(\Omega_k)}}{2C(n,A,B)})\le 0$,
by Fatou's Lemma,
\[\int_{\mathbb S^n}\log \left(\frac{u^{\Omega_0}_{p}}{2C(n,A,B)}\right) \ge \lim \sup_{k\to \infty} \int_{\mathbb S^n}\log \left(\frac{u^{\Omega_k}_{e(\Omega_k)}}{2C(n,A)}\right).\]
This yields
\begin{equation}\label{lemm34-2} \mathcal{E}(\Omega_0)\ge \limsup_{k\to \infty}  \mathcal{E}(\Omega_k).\end{equation}
Combining (\ref{lemm34-1}) and (\ref{lemm34-2}) proves the first claim of the lemma.

For the second part, suppose the statement (\ref{e-est}) is not true. there is a sequence
$\{\Omega_k\in \Gamma^A_B\}$ such that
\[\operatorname{dist}(e_{\Omega_k}, \partial \Omega_k) \to 0, \quad k\to \infty.\]
By Blaschke selection theorem (cf. \cite{Sch}, Theorem 1.8.6), there exists a subsequence of $\{\Omega_k\in \Gamma^A_B\}$, which we still denote as
$\Omega_k$, converges to a convex body $\Omega_0$. Let $p=\lim_{k\to \infty} e(\Omega_k)$. By the assumption $\operatorname{dist}(e_{\Omega_k}, \partial \Omega_k) \to 0$, we have $p\in
\partial \Omega_0$.  The support function $u_p$ of $\Omega_0$ vanishes at $p$. By the first part of the lemma, $\mathcal{E}(\Omega_0)=\lim_{k\to \infty} \mathcal{E}(\Omega_k)$. Hence $\Omega_0\in \Gamma^A_B$. Again, argue as before using Fatou's Lemma,
\[\mathcal{E}(\Omega_0)=\lim_{k\to \infty} \mathcal{E}(\Omega_k)=\lim_{k\to \infty} \frac{1}{\omega_n} \int_{\mathbb S^n} \log u^{\Omega_k}_{e(\Omega_k)} \le \frac{1}{\omega_n} \int_{\mathbb S^n} \log u_p.\]
This is a contradiction to Lemma \ref{e-p}. \end{proof}

 Now consider the positive solution to (\ref{gauss-nor}).  We first observe an easy consequence of the uniqueness.
\begin{proposition}\label{uniqueness-pos} For any given convex body $\Omega$ with normalized volume,
there at most one positive solution of (\ref{gauss-nor}) which exists on $\mathbb{S}^n \times[0, \infty)$ such that $u(x, 0)$ is a support function of $\Omega$.
\end{proposition}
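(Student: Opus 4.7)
My plan is to reduce the uniqueness statement to a maximum principle argument for the difference of the two solutions, essentially formalizing the computation that was already sketched in the paragraph preceding equation (\ref{eveq-A1}). Suppose $u_1$ and $u_2$ are two positive solutions of (\ref{gauss-nor}) on $\mathbb{S}^n\times[0,\infty)$ with $u_1(\cdot,0)=u_2(\cdot,0)=u_\Omega$. For $s\in[0,1]$ set $u_s=s u_1+(1-s)u_2$ and $A_s=\bar{\nabla}^2 u_s + u_s\,\mathrm{Id}$. Since $A$ depends linearly on $u$ and both $A_{u_1}, A_{u_2}>0$ (because $u_1,u_2$ are support functions of smooth strictly convex bodies), we have $A_s>0$ for all $s$. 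Writing $v=u_1-u_2$ and expanding $\Psi(A_{u_1})-\Psi(A_{u_2})$ via the fundamental theorem of calculus gives the linear equation
\begin{equation*}
\frac{\partial v}{\partial t}=a^{ij}(x,t)\bar{\nabla}_i\bar{\nabla}_j v+b(x,t)\, v+v,
\end{equation*}
with $a^{ij}(x,t)=\int_0^1 (\dot\Psi_{A_s})^{ij}\,ds$ and $b(x,t)=\int_0^1 \dot\Psi_{A_s}(\delta_{ij})\,ds$, and with initial data $v(\cdot,0)\equiv 0$.

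The next step is to verify that this equation is uniformly parabolic with bounded coefficients on every slab $\mathbb{S}^n\times[0,T]$. Because each $u_i$ is a positive smooth solution of (\ref{gauss-nor}), the corresponding convex body $\Omega_t^i$ has smooth boundary and $A_{u_i}\ge \lambda(T)\,\mathrm{Id}$ on $[0,T]$ for some $\lambda(T)>0$; by convex combination the same lower bound holds for $A_s$. Since $\dot\Psi_{A}^{ij}=\frac{1}{\det A}A^{ij}$ where $(A^{ij})$ denotes the inverse matrix, this gives $c_1(T)\,\mathrm{Id}\le (a^{ij})\le c_2(T)\,\mathrm{Id}$ on $\mathbb{S}^n\times[0,T]$, and $|b|\le C(T)$. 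Consequently the linear operator on the right hand side is a uniformly parabolic operator on each $\mathbb{S}^n\times[0,T]$ with bounded coefficients.

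Finally I apply the standard parabolic maximum principle on the compact manifold $\mathbb{S}^n$. Since $v(\cdot,0)\equiv 0$ and $v$ satisfies a linear parabolic equation with bounded zero-order term, one gets $v\equiv 0$ on $\mathbb{S}^n\times[0,T]$ (for instance by estimating $\tfrac{d}{dt}\max_x e^{-Kt} v$ from above and $\tfrac{d}{dt}\min_x e^{-Kt} v$ from below with $K=\sup|b|+1$). As $T$ was arbitrary, $u_1\equiv u_2$ on $\mathbb{S}^n\times[0,\infty)$. The only step that requires any care is the a priori bound $A_s\ge\lambda(T)\,\mathrm{Id}$; this follows because a positive solution of (\ref{gauss-nor}) existing on all of $[0,\infty)$ stays a smooth strictly convex support function by the regularity theory available for (\ref{gauss-nor}) (e.g. the upper bound on the Hessian recorded in (\ref{C2-upper}) together with positivity of the Gauss curvature), so no additional input beyond standard parabolic theory is needed.
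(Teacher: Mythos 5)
Your proof establishes only the standard PDE uniqueness for (\ref{gauss-nor}) \emph{given the same initial data} $u_1(\cdot,0)=u_2(\cdot,0)$. But the proposition asserts something stronger and more subtle: among \emph{all} positive solutions whose initial datum is \emph{some} support function of $\Omega$ (i.e. $u(x,0)=u_\Omega(x)-\langle a,x\rangle$ for \emph{any} admissible reference point $a\in\Omega$), there is at most one. Two such solutions can have genuinely different initial data, since shifting the reference point changes the support function by a linear term $\langle a,x\rangle$ while describing the same body $\Omega$. Your argument never touches this case, so it proves a weaker statement than the one claimed; notice also that the paper itself already records the same-initial-data uniqueness in the paragraph just before Proposition~\ref{matrix}, and then quotes it as a known fact inside the proof of Proposition~\ref{uniqueness-pos} rather than re-deriving it.

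The missing idea is the following. If $v$ is a second positive solution and $v(x,0)=u(x,0)-\sum a_ix_i$, then because the linear functions $x_i$ are annihilated by $A\mapsto \bar{\nabla}^2(\cdot)+(\cdot)\bar g$, the function $\tilde v(x,t)=u(x,t)-e^t\sum a_ix_i$ solves (\ref{gauss-nor}) with the same initial datum as $v$. By the PDE uniqueness (the part you did prove), $v\equiv\tilde v$. Now the exponential factor is decisive: $v>0$ and $v$ is a support function of $\Omega_t$ whose volume is fixed, while (via Corollary~\ref{rho-e} and Theorem~\ref{mono1}) $u$, hence $\rho_+(\Omega_t)$, is uniformly bounded. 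Then positivity of $v=u-e^t\langle a,x\rangle$ for all $t$ forces $a=0$, so $v\equiv u$. Without this step you have not ruled out the family of solutions obtained by moving the reference point, which is exactly the phenomenon the proposition is about (and which is exploited later, e.g.\ in the proof of Lemma~\ref{cov-e}, where the $u^T$ are positive only up to time $T$).
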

\begin{proof}
  Suppose $v$ is another positive solution, at $t=0$, $v(x,0)=u(x,0)-\sum_{i=1}^{n+1} a_ix_i$.
It is easy to check $\tilde v(x,t)=u(x,t)-e^t\sum_{i=1}^{n+1} a_ix_i$ is a solution of the normalized Gauss curvature flow, namely satisfies $\tilde v_t=-K+ \tilde v.$ Here note that $A_{\tilde{v}}=A_u$, hence $\Psi(A_u)=\Psi(A_{\tilde{v}})$.
Therefore, $\tilde v=v$. Hence if $a \neq 0$, $v$ can not be bounded! Therefore there exists only one positive solution to (\ref{gauss-nor}) on $\mathbb S^n \times[0, +\infty)$. \end{proof}

For each $\Omega_t$ corresponding to $u(x,t)$, let $\mathcal{E}(t)\doteqdot\mathcal{E}(\Omega_t)$. We know
$\mathcal{E}(t)\ge 0$ and monotonically decreasing. Let $\mathcal{E}_{\infty}\doteqdot\lim_{t\to \infty} \mathcal{E}(t)$.

\begin{lemma}\label{cov-e} Let  $u(x, t)$ be the unique positive solution of (\ref{gauss-nor}).  Then
\begin{equation}\label{100e}  \aint_{\mathbb S^n} \log u(x,t)\ge \mathcal{E}_{\infty} +\int_t^\infty \aint_{\mathbb{S}^n}\left(\sqrt{\frac{K}{u}}-\sqrt{\frac{u}{K}}\right)^2,\,  \forall t\ge 0.\end{equation}
In particular, $\mathcal{E}(t)\ge \mathcal{E}_F(t)\ge \mathcal{E}_\infty$. \end{lemma}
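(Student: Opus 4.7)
The plan is to derive a sharp monotonicity identity for Firey's entropy $\mathcal{E}_F(t) = \aint_{\mathbb{S}^n} \log u(x,t)\, d\theta$ and then identify its long-time limit $\mathcal{E}_F(\infty)$ with $\mathcal{E}_\infty$. Starting from $u_t = u - K$ from (\ref{gauss-nor}) and the preserved identity $\aint u/K\, d\theta = \frac{n+1}{\omega_n} V(\Omega_t) = 1$, direct differentiation yields
\[
\frac{d}{dt}\mathcal{E}_F(t) = 1 - \aint_{\mathbb{S}^n}\frac{K}{u}\, d\theta = -\aint_{\mathbb{S}^n}\left(\sqrt{K/u} - \sqrt{u/K}\right)^2 d\theta \le 0
\]
(using the algebraic identity $(\sqrt{a}-\sqrt{b})^2 = a + b - 2$ when $ab = 1$). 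So $\mathcal{E}_F$ is monotone non-increasing, and integrating from $t$ to $T > t$ and letting $T\to\infty$ gives
\[
\mathcal{E}_F(t) = \mathcal{E}_F(\infty) + \int_t^\infty \aint_{\mathbb{S}^n}\left(\sqrt{K/u}-\sqrt{u/K}\right)^2 ds.
\]

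Since $\mathcal{E}_F(t) \le \mathcal{E}(t)$ always (the origin is one admissible shift in the supremum defining $\mathcal{E}$), the trivial inequality $\mathcal{E}_F(\infty) \le \mathcal{E}_\infty$ holds; hence (\ref{100e}) is equivalent to the reverse inequality $\mathcal{E}_F(\infty) \ge \mathcal{E}_\infty$, i.e.\ to the equality $\mathcal{E}_F(\infty) = \mathcal{E}_\infty$. For this I would run an $\omega$-limit argument. Corollary \ref{rho-e} bounds $\operatorname{diam}(\Omega_t)$, so by the Blaschke selection theorem a subsequence $\Omega_{t_k}\to\Omega^*$ converges in Hausdorff distance with $V(\Omega^*)=V(B(1))$. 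Theorem \ref{mono1} forces $\int_0^\infty \aint(\sqrt{K/u_{e(s)}}-\sqrt{u_{e(s)}/K})^2 ds < \infty$, so after thinning the sequence the integrand tends to $0$ at $t_k$; together with Lemma \ref{e-cont} (continuity of the entropy point) and sufficient $C^2$ convergence of $u(\cdot,t_k)$, this forces the soliton identity $K^* = u^*_{e^*}$, where $e^* := e(\Omega^*)$ and the starred quantities refer to $\Omega^*$.

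The rigidity step rules out $e^*\neq 0$. The support function of $\Omega^*$ about the origin is $u^*(x) = K^*(x) + \langle e^*,x\rangle$; since $A_u$ is unchanged under the addition of a first spherical harmonic to $u$ (because $\bar{\nabla}_i\bar{\nabla}_j\langle a,x\rangle = -\langle a,x\rangle\bar{g}_{ij}$, so $K$ is constant in $s$), the solution of (\ref{gauss-nor}) starting from $u^*$ integrates explicitly to
\[
u(x,s) = K^*(x) + e^s\langle e^*,x\rangle,
\]
which is the support function of the translated soliton $\Omega^{\mathrm{sol}} + e^s e^*$. For $s$ large enough that $e^s|e^*|$ exceeds the outer radius of $\Omega^{\mathrm{sol}}$, the origin lies outside this translated body. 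By continuous dependence of (\ref{gauss-nor}) on initial data, $\Omega_{t_k+s} \to \Omega^{\mathrm{sol}} + e^s e^*$ in Hausdorff as $k\to\infty$ for each fixed $s$, and since the origin lies in each $\Omega_{t_k+s}$ (positivity of $u$), the origin must lie in the Hausdorff limit, a contradiction. Hence $e^*=0$, the origin is the entropy point of $\Omega^*$, and $\mathcal{E}_F(\infty) = \aint \log u^* = \mathcal{E}(\Omega^*) = \mathcal{E}_\infty$, completing the proof of (\ref{100e}) (in fact as equality). The ``in particular'' assertion follows immediately by discarding the nonnegative integral.

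The principal obstacle is the $\omega$-limit/rigidity step: extracting a soliton from the Hausdorff-convergent subsequence and invoking continuous dependence of the flow both require $C^2$-type a priori control of $u$, together with the long-time positive existence provided by Proposition \ref{uniqueness-pos}; these ingredients are supplied by the estimates developed in the remainder of this section.
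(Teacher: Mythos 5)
Your first step (the Firey monotonicity identity and the reduction of (\ref{100e}) to the single claim $\mathcal{E}_F(\infty)=\mathcal{E}_\infty$) is correct and clean. The problem is the second step, which as you yourself flag ``requires $C^2$-type a priori control of $u$ \ldots supplied by the estimates developed in the remainder of this section.'' That dependence is circular. In the paper, Lemma~\ref{cov-e} is a building block for Theorem~\ref{C0} (the uniform lower bound on $u$), and all the $C^2$-regularity one needs to extract a soliton from an $\omega$-limit --- Theorem~\ref{C2-u}, Theorem~\ref{thm-lower-k}, Theorem~\ref{c2-sharp}, (\ref{Ck}), and ultimately Proposition~\ref{s-con} --- sit in the \emph{next} section and rely crucially on Theorem~\ref{C0}. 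So you cannot invoke subsequential $C^2$ convergence, nor continuous dependence of the normalized flow on initial data in a topology strong enough to carry the rigidity argument, without already having Lemma~\ref{cov-e} in hand. The $\omega$-limit/soliton step is not a routine obstacle to be deferred; at this stage of the paper it is unavailable.

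The paper's actual proof sidesteps the limit structure entirely and is much more elementary. For each large $T$ one takes the entropy point $a^T$ of $\Omega_T$ and forms the shifted solution $u^T(x,t)=u(x,t)-e^{t-T}\sum_i a^T_i x_i$ of (\ref{gauss-nor}). This $u^T$ is positive on $\mathbb{S}^n\times[0,T]$ (it is positive at $t=T$ since $a^T$ is an interior point, and at $t=0$ for $T$ large; if it ever touched $0$ the ODE structure would force it negative afterwards, contradicting positivity at $t=T$), and its Firey entropy satisfies the same monotonicity identity you derived. Integrating on $[0,T]$ and noting $\aint\log u^T(\cdot,T)=\mathcal{E}(\Omega_T)$, then restricting the right-hand side to $[0,T_0]$, gives
\[
\aint_{\mathbb{S}^n}\log u^T(x,0)-\mathcal{E}(\Omega_T)\ge \int_0^{T_0}\aint_{\mathbb{S}^n}\left(\sqrt{\frac{K}{u^T}}-\sqrt{\frac{u^T}{K}}\right)^2.
\]
Since $|a^T|\le 2\rho_+$ is bounded by Corollary~\ref{rho-e} and Theorem~\ref{mono1} (which are already available), $u^T\to u$ uniformly on $\mathbb{S}^n\times[0,T_0]$ as $T\to\infty$, and then $T_0\to\infty$ gives (\ref{100e}) at $t=0$; replacing $0$ by $t$ is identical. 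No compactness, no identification of the limit as a soliton, and no regularity beyond what is already in hand. If you want to salvage your route you would need to establish the regularity theory first and Lemma~\ref{cov-e} afterward, which changes the architecture of the whole argument; the paper's order is deliberate.

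A minor stylistic point: your parenthetical ``(using $(\sqrt{a}-\sqrt{b})^2=a+b-2$ when $ab=1$)'' reads as if the normalization is pointwise. It isn't --- $\frac{K}{u}\cdot\frac{u}{K}=1$ pointwise gives $(\sqrt{K/u}-\sqrt{u/K})^2=\frac{K}{u}+\frac{u}{K}-2$, and then you use $\aint \frac{u}{K}=1$ only after integrating. The computation is right; the phrasing invites misreading.
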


 \begin{proof} For each $T_0>$ fixed, pick $T > T_0$. Let $a^{T}=(a^{T}_1,\cdots, a^{T}_{n+1})$ be
the entropy point of $\Omega_T$. Set $u^T=u- e^{t-T}\sum_{i=1}^{n+1} a^T_ix_i$, it can be checked that
\begin{equation}\label{TGCF} u^T_t=-K+u^T.\end{equation}
Since both the origin and the entropy point $a^T$ are in $\operatorname{Int}(\Omega_T)$,
\[|a^T|\le 2\rho^+(t)\le C.\] If $T$ large enough, $u^T(x, 0)>0, \forall x\in \mathbb S^n$. We also
know $u^T(x, T)>0, \forall x\in \mathbb S^n$ since the entropy point is an interior point of $\Omega_T$.
If $u^T(x_0, t_0)\le 0$ for some $0<t_0<T, x_0\in \mathbb S^n$, the equation (\ref{TGCF}) implies $u^T(x_0, t)<0$ for
all $t>t_0$, which contradicts to $u^T(x, T)>0$. Hence  $u^T(x, t)>0, 0\le t\le T, x\in \mathbb S^n$. By equation (\ref{TGCF}), a similar calculation as in Theorem \ref{mono1} shows
\begin{eqnarray}\label{Te1} \frac{d}{dt}\left(\int_{\mathbb S^n} \log u^T(x,t)\right)=-\int_{\mathbb S^n} \left(\sqrt{\frac{K(x,t)}{u^T(x,t)}}-\sqrt{\frac{u^T(x,t)}{K(x,t)}}\right)^2, \forall\,  0\le t\le T.\end{eqnarray}
Hence
\begin{eqnarray*} \frac1{\omega_n} \int_{\mathbb S^n} \log u^T(x,0)-\mathcal{E}(T)= \frac1{\omega_n}\int_{t=0}^{T}\int_{\mathbb S^n} \left(\sqrt{\frac{K(x,t)}{u^T(x,t)}}-\sqrt{\frac{u^T(x,t)}{K(x,t)}}\right)^2 .\end{eqnarray*}
Since $T_0<T$,
\begin{eqnarray*}\frac1{\omega_n} \int_{\mathbb S^n} \log u^T(x,0)-\mathcal{E}(T)\ge \frac1{\omega_n} \int_{t=0}^{T_0}\int_{\mathbb S^n} \left(\sqrt{\frac{K(x,t)}{u^T(x,t)}}-\sqrt{\frac{u^T(x,t)}{K(x,t)}}\right)^2.\end{eqnarray*}
Now let $T\to \infty$, as $u^T(x,t)\to u(x,t)$ uniformly for $0\le t\le T_0, x\in \mathbb S^n$, we obtain
\begin{eqnarray}\label{Te2}\frac1{\omega_n} \int_{\mathbb S^n} \log u(x,0)-\mathcal{E}_{\infty}\ge \frac1{\omega_n} \int_{t=0}^{T_0}\int_{\mathbb S^n} \left(\sqrt{\frac{K(x,t)}{u(x,t)}}-\sqrt{\frac{u(x,t)}{K(x,t)}}\right)^2.\end{eqnarray}
Now (\ref{100e}), for $t=0$, follows directly from (\ref{Te2}) since $T_0$ is arbitrary. In the above if we replace $0$ by any $t\le T$ we obtain (\ref{100e}). The proof of the lemma is complete. \end{proof}

Lemma \ref{cov-e} has the following immediate consequence.
\begin{corollary}
$$
\lim_{t\to \infty} \mathcal{E}_C (\Omega_t)=\lim_{t\to\infty}\mathcal{E}(\Omega_t)=\mathcal{E}_\infty.
$$\end{corollary}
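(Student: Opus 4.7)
The plan is to combine Chow's monotonicity of $\mathcal{E}_C$ along the normalized flow with the integral estimate supplied by Theorem \ref{mono1}. Since $\mathcal{E}_C(\Omega_t)$ is monotone non-increasing and bounded below by $0$ (via $\mathcal{E}_C\ge \mathcal{E}\ge 0$, which is Proposition \ref{two-entropy}), the limit $L:=\lim_{t\to\infty}\mathcal{E}_C(\Omega_t)$ exists. Passing to the limit in $\mathcal{E}_C(\Omega_t)\ge \mathcal{E}(\Omega_t)$ at once yields $L\ge \mathcal{E}_\infty$.

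For the reverse inequality I would exploit the quantitative form of Theorem \ref{mono1}. Taking $t_1=0$ and letting $t_0\to\infty$ in (\ref{eq-entropy-de-speed}) gives
\begin{equation*}
\int_0^\infty \bigl(\mathcal{E}_C(\Omega_t)-\mathcal{E}(\Omega_t)\bigr)\, dt \;\le\; \mathcal{E}(\Omega_0)-\mathcal{E}_\infty \;<\;\infty.
\end{equation*}
The integrand is non-negative and, since both $\mathcal{E}_C(\Omega_t)$ and $\mathcal{E}(\Omega_t)$ have limits as $t\to\infty$, it converges pointwise to $L-\mathcal{E}_\infty\ge 0$. If $L>\mathcal{E}_\infty$, then for all sufficiently large $t$ we would have $\mathcal{E}_C(\Omega_t)-\mathcal{E}(\Omega_t)\ge (L-\mathcal{E}_\infty)/2>0$, which contradicts the finiteness of the displayed integral. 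Thus $L=\mathcal{E}_\infty$, which, together with the definitional identity $\lim_{t\to\infty}\mathcal{E}(\Omega_t)=\mathcal{E}_\infty$, is the claim.

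I do not anticipate a serious obstacle. The argument reduces to the elementary principle that a non-negative integrable function on $[0,\infty)$ which admits a limit at infinity must have that limit equal to zero. The only input beyond what appears immediately before the corollary is the existence of $\lim_{t\to\infty}\mathcal{E}_C(\Omega_t)$, and this is furnished directly by Chow's monotonicity theorem cited in the paragraph preceding Theorem \ref{mono1}.
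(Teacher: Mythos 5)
Your argument is correct and follows essentially the same route as the paper: both proceed by contradiction from the integrability of the nonnegative gap $\mathcal{E}_C(\Omega_t)-\mathcal{E}(\Omega_t)$ over $[0,\infty)$ together with the existence of both limits. The only cosmetic difference is that you draw the integrability from~(\ref{eq-entropy-de-speed}) in Theorem~\ref{mono1}, whereas the paper invokes~(\ref{100e}) from Lemma~\ref{cov-e} (which bounds the same integral after the elementary estimate $\mathcal{E}_C-\mathcal{E}\le\aint_{\mathbb{S}^n}(\sqrt{K/u}-\sqrt{u/K})^2$); both sources are available and either works.
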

\begin{proof} Since $\mathcal{E}_C(\Omega_t)\ge \mathcal{E}(\Omega_t)$,  $\lim_{t\to \infty} \mathcal{E}_C (\Omega_t)\ge \mathcal{E}_\infty$.
Assume that the equality does not holds. Then there exists $\delta>0$, and  for sufficiently  large $t_0$ we have that
$ \mathcal{E}_C(\Omega_t)-\mathcal{E}(\Omega_t)\ge \delta$ for $t\ge t_0$. This contradicts to (\ref{100e}) since the integral on the right hand side is finite, and bounds $\int_t^\infty  \mathcal{E}_C(\Omega_s)-\mathcal{E}(\Omega_s)\, ds$ from the above. This is a contradiction, which proves the claim.
\end{proof}

 Now we are ready to prove Theorem \ref{C0}.

\begin{proof} (of Theorem \ref{C0})  Since $\mathcal{E}(\Omega_t)\to \mathcal{E}_{\infty}$. By (\ref{100e}),
\[\mathcal{E}_{\infty}\le \frac1{\omega_n} \int_{\mathbb S^n} \log u(x,t)\le \mathcal{E}(\Omega_t).\]
That is,
\[ 0\le \mathcal{E}(\Omega_t)-\frac1{\omega_n} \int_{\mathbb S^n} \log u(x,t ) \to 0, \mbox{ as } t\to \infty.\]
Note $u$ is the support function of $\Omega_t$ with respect to the origin, by Lemma \ref{e-concave},
$e(\Omega_t) \to 0$ as $t\to \infty$. The claimed lower estimate now  follows from (\ref{e-est}) in Lemma \ref{e-cont}. \end{proof}

The proof effectively shows that there exists $C=C(\Omega_0, n)$ such that if $e(t)=e(\Omega_t)$ the entropy point of $\Omega_t$,
\begin{equation}\label{entropy-to-0}
|e(t)|^2 \le C \left(\mathcal{E}(t)-\aint_{\mathbb{S}^n} \log u(x, t)\right).
\end{equation}
Finally the following corollary summarizes  Corollary \ref{rho-e}, Theorem \ref{mono1} and Theorem \ref{C0}.

\begin{corollary}\label{C0-sum} Let $u(x, t)$ be as in Theorem \ref{C0}. Then there exists $\Lambda=\Lambda(\Omega_0, n)>0$ such that
\begin{equation}\label{C0-2sides}
\frac{1}{\Lambda}\le u(x, t)\le \Lambda.
\end{equation}
\end{corollary}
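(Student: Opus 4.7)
The plan is to simply package together the three ingredients already established. For the upper bound, Theorem \ref{mono1} gives $\mathcal{E}(\Omega_t)\le \mathcal{E}(\Omega_0)$ for all $t\ge 0$. Then the upper estimate \eqref{C0-upper} of Corollary \ref{rho-e} yields
\[
\rho_{+}(\Omega_t)\le C_n e^{\mathcal{E}(\Omega_t)}\le C_n e^{\mathcal{E}(\Omega_0)}.
\]
Since the origin lies in $\Omega_t$ (by our choice that $z_\infty=0$ and the discussion at the beginning of Section 4), the support function with respect to the origin satisfies $u(x,t)\le \rho_{+}(\Omega_t)$ for every $x\in \mathbb S^n$. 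This gives an upper bound of $u(x,t)$ depending only on $n$ and $\mathcal{E}(\Omega_0)$, hence only on $n$ and $\Omega_0$.

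For the lower bound, the hypothesis of Theorem \ref{C0} is exactly met: $u(x,0)=u_0(x)$ is the support function of $\Omega_0$, $V(\Omega_0)=V(B(1))$, and $A\doteqdot \mathcal{E}(\Omega_0)<\infty$ is a valid upper bound on the initial entropy. Thus there exists $\epsilon=\epsilon(n,\Omega_0)>0$ with $u(x,t)\ge \epsilon$ for all $(x,t)\in \mathbb S^n\times [0,\infty)$.

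Setting
\[
\Lambda\doteqdot \max\Bigl\{C_n e^{\mathcal{E}(\Omega_0)},\,\epsilon(n,\Omega_0)^{-1}\Bigr\}
\]
yields the desired two-sided bound \eqref{C0-2sides}. In other words, no additional argument is required beyond invoking the monotonicity of the entropy (Theorem \ref{mono1}) to transfer the geometric upper bound of Corollary \ref{rho-e} from $t=0$ to all later times, and appealing to Theorem \ref{C0} for the uniform positive lower bound. The only conceptual subtlety is the one already resolved in Theorem \ref{C0}: because the origin is fixed as the contracting point $z_\infty$ and is in general not the entropy point of $\Omega_t$, the lower bound on $u(x,t)$ (support function with respect to $0$) does not follow merely from $\rho_{-}(\Omega_t)\ge c>0$ but requires the convergence $e(\Omega_t)\to 0$ that was the main content of Theorem \ref{C0}.
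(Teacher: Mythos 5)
Your proposal is correct and matches the paper's intent exactly: the paper presents this corollary as a summary of Corollary \ref{rho-e}, Theorem \ref{mono1}, and Theorem \ref{C0}, which is precisely the combination you carry out. The only minor imprecision is the claim $u(x,t)\le\rho_{+}(\Omega_t)$; since the smallest enclosing ball need not be centered at the origin, the correct bound is $u(x,t)\le 2\rho_{+}(\Omega_t)$ (or, more directly, $u(x,t)\le w_{+}$, which Corollary \ref{rho-e} also controls), but this constant factor does not affect the conclusion.
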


\section{$C^2$-estimates and the convergence}

 In this section we derive uniform $C^2$-estimates out of the $C^0$-estimate (\ref{C0-2sides}).
The first is a upper estimate, which was first proved by Hamilton \cite{Hamilton-gauss}. We provide a  different proof here using the $C^0$-estimate.

\begin{theorem}\label{C2-u} Suppose $u(x, t)\ge a>0$ is the solution of (\ref{gauss-nor}) with initial data $u(x, 0)=u_0(x)$, where $u_0(x)>0$ is the support function of $\Omega_0$ with $V(\Omega_0)=V(B(1))$.
There exists  a constant $C=C(a, n)>0$ such that
\begin{equation}\label{upper-c2}
K(x, t)\le C.
\end{equation}
\end{theorem}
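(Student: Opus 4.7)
My plan is to apply Tso's trick, adapted to the normalized flow. Let $a>0$ be the uniform lower bound on $u$ provided by Corollary~\ref{C0-sum}, and set
$$
Q(x,t) := \frac{K(x,t)}{u(x,t)-a/2}.
$$
Since $u-a/2 \ge a/2 > 0$, the function $Q$ is smooth and positive on $\mathbb{S}^n \times [0,\infty)$, and a uniform upper bound on $K$ is equivalent to a uniform upper bound on $\sup_{\mathbb{S}^n} Q(\cdot,t)$.

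First I would compute $(\partial_t - \mathcal{L})\log Q$ using the evolution equations (\ref{sup-pde}) and (\ref{speed-pde}), together with $\Psi = -K$ and $\dot\Psi_A(\operatorname{id}) = KH$. Applying the identity $(\partial_t - \mathcal{L})\log f = \tfrac{1}{f}(\partial_t - \mathcal{L})f + \tfrac{1}{f^2} K A^{ij}\bar\nabla_i f\,\bar\nabla_j f$ to $f = K$ and $f = u - a/2$ and subtracting yields
$$
(\partial_t - \mathcal{L})\log Q = KH - n - \frac{-(n+1)K + u + uKH}{u-a/2} + K A^{ij}\!\left(\frac{\bar\nabla_i K\,\bar\nabla_j K}{K^2} - \frac{\bar\nabla_i u\,\bar\nabla_j u}{(u-a/2)^2}\right)\!.
$$
The crucial observation is that at any spatial critical point of $\log Q$ one has $\bar\nabla K/K = \bar\nabla u/(u-a/2)$, so the bracketed gradient quadratic form vanishes identically.

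Applying the parabolic maximum principle at a spacetime maximum of $\log Q$ on $\mathbb{S}^n \times [0,T]$ (with initial data controlled in terms of $M_0$) therefore yields the pointwise inequality
$$
0 \le KH - n - \frac{-(n+1)K + u + uKH}{u-a/2}.
$$
Multiplying through by $u - a/2 > 0$, simplifying, and discarding the favorable term $-(n+1)u \le 0$ gives $\tfrac{a}{2} KH \le (n+1) K + \tfrac{na}{2}$, and invoking the Cauchy--Schwarz (AM--GM) bound $H \ge n K^{1/n}$ then produces
$$
\tfrac{an}{2} K^{1+1/n} \le (n+1) K + \tfrac{na}{2},
$$
which forces $K \le C(a,n)$ at the maximum point, and hence uniformly in $(x,t)$.

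The main item requiring care is verifying the cancellation of the gradient quadratic forms at critical points and the sign bookkeeping; the rest is a routine maximum principle argument. Notice that the $C^0$ lower bound on $u$ from Section~4 (Corollary~\ref{C0-sum}) is the essential input, and indeed no upper bound on $u$ is needed at this stage because the term $-(n+1)u$ can be discarded.
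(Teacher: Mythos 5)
Your proposal is correct and is essentially the paper's own argument: the auxiliary quantity $K/(u-a/2)$ is, up to a factor $2$, the paper's $Q=K/(2u-a)$, and the maximum-principle computation is the same. Working with $\log Q$ rather than $Q$ makes the cancellation of the gradient terms at the critical point a bit more transparent (the paper instead observes that $2\dot\Psi_{ij}\bar\nabla_i Q\,\bar\nabla_j\log(2u-a)$ vanishes because $\bar\nabla Q=0$ there), and the slightly different discarding of favorable terms yields $\tfrac{an}{2}K^{1+1/n}\le(n+1)K+\tfrac{na}{2}$ in place of the paper's cleaner $aH\le2(n+1)$, but both force $K\le C(a,n)$ at the maximum point. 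One small caution: your closing remark that ``no upper bound on $u$ is needed'' is a bit misleading --- passing from a bound on $K$ at the spacetime maximum of $Q$ to a bound on $K$ everywhere uses $K=Q\cdot(u-a/2)\le Q_{\max}\cdot(u-a/2)$, so an upper bound on $u$ \emph{is} invoked in that last step (it is available, either from Corollary~\ref{C0-sum} or directly from $u\ge a$ together with $V(\Omega_t)=V(B(1))$); the paper's proof contains the same implicit step.
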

\begin{proof}  Consider the quantity $Q\doteqdot \frac{K}{2u-a}$. Applying the evolution equations (\ref{sup-pde}) and (\ref{speed-pde})
\begin{eqnarray*}
\left(\frac{\partial}{\partial t} -\mathcal{L}\right)Q&=& \frac{K^2 H -nK}{2u-a}-2K\frac{-(n+1)K+u+uKH}{(2u-a)^2} +2\dot\Psi_{ij}\bar{\nabla}_i Q \bar{\nabla}_j \log (2u-a)\\
&=& \frac{-a K^2 H +2(n+1)K^2 -(2u-a)nK -2uK}{(2u-a)^2}  +2\dot\Psi_{ij}\bar{\nabla}_i Q \bar{\nabla}_j \log (2u-a).
\end{eqnarray*}
Now apply the maximum principle, if $m(t)=\max_{x\in \bfS^n} Q(x, t)$ is achieved at $(x_0, t)$, then at that point we have that
\begin{eqnarray*}
0&\le& \frac{-a K^2 H +2(n+1)K^2 -(2u-a)nK -2uK}{(2u-a)^2}\\
&\le& m(t)^2 \left(-aH +2(n+1)\right).
\end{eqnarray*}
Noting that $K\le \left(\frac{H}{n}\right)^n$, we then deduce that at $(x_0, t)$,
$$
K\le \left(\frac{2(n+1)}{n\cdot a}\right)^n
$$
which in turn implies that
$$
m(t)\le \left(\frac{2(n+1)}{n}\right)^n\frac{1}{a^{n+1}}.
$$
The claimed estimate now follows from the above.
\end{proof}

We remark that in \cite{Hamilton-gauss}, Hamilton obtained the above estimate (cf. Corollary on page 156 of \cite{Hamilton-gauss}) by using the sharp differential estimate of Chow (which is also referred as a differential Harnack estimate, as well as a Li-Yau-Hamilton type estimate) and the entropy formula of Chow \cite{Chow}. Hamilton's estimate is built upon a lower estimate of  $\frac{u(x, t)}{K(x, t)}$. Our proof of Theorem \ref{C2-u} avoids the use of Chow's entropy formula and his  differential  estimate \cite{Chow}, but based on the $C^0$-lower bound.  Below we include a slightly stronger result on the lower estimate of $\frac{u(x, t)}{K(x, t)}$.

\begin{proposition}\label{hamilton1} Let $u$ be a solution to the un-normalized flow (\ref{gauss2}) with the reference point being the limit point, when $t\to T$. Then
\begin{equation}\label{hamilton-im}
\frac{u(x, t)}{K(x, t)}\ge (n+1)t^{\frac{n}{n+1}} \left(T^{\frac{1}{n+1}}-t^{\frac{1}{n+1}}\right).
\end{equation}
\end{proposition}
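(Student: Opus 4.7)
The plan is to combine Chow's differential Harnack inequality with the boundary behavior at the extinction time. In the Gauss map parametrization, Chow's Harnack estimate \cite{Chow} for the un-normalized Gauss curvature flow has as a weak (pointwise) consequence that
$$\frac{\partial K}{\partial t}(x,t) + \frac{n}{(n+1)\,t}\, K(x,t) \ge 0,$$
equivalently, that $t \mapsto K(x,t)\, t^{n/(n+1)}$ is non-decreasing in $t$ for every fixed $x \in \mathbb{S}^n$. Hence, for $s\ge t > 0$, one gets $K(x,s) \ge K(x,t)\,(t/s)^{n/(n+1)}$.

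Next, since by assumption the reference point is the limit point at which $M_t$ collapses, $X(\cdot,t)\to 0$ as $t\to T$, whence $u(x,T)=0$ for every $x$. Integrating the un-normalized flow equation $\partial_t u = -K$ from $t$ to $T$ then gives the pointwise identity
$$u(x,t) = \int_t^T K(x,s)\, ds.$$
Combining these two ingredients and using that the antiderivative of $s^{-n/(n+1)}$ is $(n+1)\,s^{1/(n+1)}$, I would estimate
$$u(x,t) \ge K(x,t)\int_t^T \left(\frac{t}{s}\right)^{n/(n+1)} ds = (n+1)\, K(x,t)\, t^{n/(n+1)}\bigl(T^{1/(n+1)} - t^{1/(n+1)}\bigr),$$
and divide by $K(x,t)$ to conclude.

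The only real obstacle is the invocation of Chow's Harnack inequality; the remainder of the argument is a direct integration. One might hope to bypass Chow's result by applying the parabolic maximum principle directly to $W := u/K - f(t)$ with the prescribed $f(t)=(n+1)t^{n/(n+1)}(T^{1/(n+1)}-t^{1/(n+1)})$. A computation under the un-normalized flow gives $(\partial_t - \mathcal{L})(u/K) = -(n+1) + \frac{2}{K}\dot\Psi^{ij}K_j(u/K)_i$, so at a spatial minimum of $W$ the gradient term drops out and one is left with $\partial_t W \ge -(n+1) - f'(t)$. Since $f'(t)$ blows up like $t^{-1/(n+1)}$ as $t\to 0^+$, this direct approach cannot recover the stated sharp form, and Chow's differential Harnack appears essential.
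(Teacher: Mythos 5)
Your proof is correct and uses the same two essential ingredients as the paper: Chow's differential Harnack inequality in the Gauss-map parametrization (giving monotonicity of $t\mapsto K(x,t)\,t^{n/(n+1)}$) together with the vanishing of $u$ at the extinction time $T$. The paper phrases the conclusion as an ODE comparison on $y(t)=u/K$, namely $y'\le -1+\tfrac{n}{(n+1)t}y$ with $y(T)=0$, whereas you integrate $u_t=-K$ directly and insert the pointwise Harnack lower bound on $K(x,s)$ inside the resulting integral; these are the same computation in slightly different packaging.
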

Since $T\ge t$, the above (\ref{hamilton-im}) implies $\frac{u(x, t)}{K(x, t)}\ge \left(\frac{t}{T}\right)^{\frac{n}{n+1}}(T-t)$, a result of Hamilton \cite{Hamilton-gauss}.

\begin{proof} By the differential estimate of Chow \cite{Chow}[Theorem 3.7],   we deduce  that, with respect the parametrization via the Gauss map,
$$
-\Psi_t -\frac{n}{(n+1)t} \Psi \ge 0.
$$
Then the direct calculation shows that $y(t)=\frac{u}{-\Psi}$ satisfies the estimate:
$$
y'(t)\le -1+\frac{n}{(n+1)t} y(t).
$$
Noticing that $y(t)\to 0$ as $t\to T$, integrating the above from $t$ to $T$ yields
$$
-t^{-\frac{n}{n+1}}y(t) \le -(n+1)\left(T^{\frac{1}{n+1}}-t^{\frac{1}{n+1}}\right).
$$
Hence we have the claimed estimate.
\end{proof}

Note that for the solution $u(x, t)$ to the normalized flow (\ref{gauss-nor}), the estimate (\ref{hamilton-im}) implies that
\begin{equation}\label{hamilton-im-nor}
\frac{u(x, t)}{K(x,t)}\ge \frac{1}{n+1}\left(1-e^{-(n-1)t}\right)^{n/n+1}
\end{equation}
which together with Corollary \ref{rho-e}, Theorem \ref{mono1} gives another proof of Theorem \ref{C2-u}.

For the $C^2$-estimate we first need the following lower bound on the Gauss curvature $K(x, t)$.

\begin{theorem}\label{thm-lower-k} Suppose $u(x, t)>0$ is a positive solution of (\ref{gauss-nor}), obtained from the un-normalized flow (\ref{gauss2}), with initial data $u(x,0)=u_0(x)$, where $u_0(x)>0$ is the support function of $\Omega_0$ with $V(\Omega_0)=V(B(1))$. Then there exists a constant $\epsilon_1=\epsilon(n, \Omega_0)>0$ such that
\begin{equation}\label{lower-C2}
K(x, t)\ge \epsilon_1.
\end{equation}
\end{theorem}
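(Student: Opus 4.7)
The plan is to reduce the lower bound on $K$ to a uniform upper bound on the inverse Weingarten tensor $A = \bar\nabla^2 u + u\bar g$. Since $A>0$ (by strict convexity), the Maclaurin inequality gives $\det A \le (\operatorname{tr}(A)/n)^n$, whence
\[
K \;=\; \frac{1}{\det A} \;\ge\; \left(\frac{n}{\operatorname{tr}(A)}\right)^{n}.
\]
So it suffices to prove a uniform upper bound on $\operatorname{tr}(A) = \bar\Delta u + nu$, i.e., a $C^{2}$ upper bound on the support function.

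I would attack this with the maximum principle applied to an auxiliary function built from $\operatorname{tr}(A)$ and $u$, using three ingredients already established in the paper: (a) the $C^{0}$ bounds $\epsilon_{0} \le u \le \Lambda$ from Corollary \ref{C0-sum}; (b) the upper bound $K \le \overline{K}$ from Theorem \ref{C2-u}; and (c) the evolution equation (\ref{eveq-A1}) for $A_{ij}$. Tracing (\ref{eveq-A1}) against $\bar g^{ij}$ and using the concavity of $\Psi = -1/\det A$ as a function of $A$ (which makes $\bar g^{ij}\ddot \Psi_A(\bar\nabla_{i} A, \bar\nabla_{j} A) \le 0$), one gets
\[
(\partial_{t} - \mathcal{L})\operatorname{tr}(A) \;\le\; (1 - KH)\operatorname{tr}(A) + n(n-1)K.
\]
A natural candidate auxiliary function is
\[
\Phi(x,t) \;:=\; \frac{\operatorname{tr}(A)(x,t)}{\bigl(u(x,t) - \epsilon_{0}/2\bigr)^{\alpha}},
\]
for an exponent $\alpha>0$ to be chosen; by Theorem \ref{C0} the denominator is bounded below by $(\epsilon_{0}/2)^{\alpha}$, so $\Phi$ is smooth and positive.

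The goal is to compute $(\partial_{t} - \mathcal{L})\Phi$ using the $u$-evolution (\ref{sup-pde1}), simplify at a spatial maximum by the critical-point relation $\bar\nabla\operatorname{tr}(A)/\operatorname{tr}(A) = \alpha\,\bar\nabla u/(u - \epsilon_{0}/2)$ (which recasts the two gradient-squared terms in a common form), and invoke the AM--HM inequality $\operatorname{tr}(A)\cdot H \ge n^{2}$ to convert $-KH\operatorname{tr}(A)$ into $-n^{2}K$. Combined with the bounds in (a) and (b), the target is an inequality of the form
\[
\Phi_{t}(x_{0}, t_{0}) \;\le\; C_{1}(n, \epsilon_{0}, \Lambda, \overline{K}) \;-\; C_{2}\,\Phi(x_{0}, t_{0})
\]
at any interior spatial maximum $(x_{0}, t_{0})$, which forces $\max_{x}\Phi(\cdot,t) \le \max\{\Phi(\cdot,0),\, C_{1}/C_{2}\}$ uniformly in $t$. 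Back-substituting yields $\operatorname{tr}(A) \le C$, and therefore $K \ge (n/C)^{n} =: \epsilon_{1}$.

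The main obstacle is algebraic balancing in the maximum-principle step. The ``good'' coercive term $-KH\operatorname{tr}(A)$ is weak precisely in the regime of interest, since $KH \ge nK^{1+1/n}$ is small when $K$ is small. The weight $(u-\epsilon_{0}/2)^{-\alpha}$ must be calibrated so that the contribution $\alpha\,u\,KH/(u-\epsilon_{0}/2)$ coming from (\ref{sup-pde1}) produces a negative term of order $\Phi$ rather than merely of order $K$, while the cross gradient-squared terms do not spoil the sign. The choice $\alpha = 1$ makes the gradient-squared correction vanish exactly, but the resulting coefficient of $\operatorname{tr}(A)$ may fall short; the clean choice of exponent (or the alternative weight $(C_{0}-u)^{\alpha}$ with $C_{0}>\Lambda$) involves a careful interplay among $\epsilon_{0}$, $\Lambda$, $\overline{K}$, and $n$. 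If this direct reasoning does not close, I would instead apply Hamilton's tensor maximum principle to the largest eigenvalue $\lambda_{\max}(A)$, exploiting the full $\tfrac{n+1}{n}$-concavity (\ref{n-concave}) of $\Psi$ to extract the additional negative gradient contribution from the eigenvector direction, which should tip the balance and close the estimate.
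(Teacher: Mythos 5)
Your plan to deduce the lower bound on $K$ from a maximum-principle upper bound on $\operatorname{tr}(A)$ is essentially circular, and I do not think the balancing you describe can close.  Trace the evolution of $A$ and write everything in terms of the elementary symmetric functions $\sigma_k = \sigma_k(A)$: the reaction terms are
\[
(\partial_t - \mathcal{L})\sigma_1 \;\le\; -\frac{\sigma_1\sigma_{n-1}}{\sigma_n^{2}} + \sigma_1 + \frac{n(n-1)}{\sigma_n},
\]
which is exactly (\ref{trace1}).  The only coercive term is $-\sigma_1\sigma_{n-1}/\sigma_n^{2} = -KH\sigma_1$.  Newton's inequality (\ref{newton}) upgrades it to $-c_n\,\sigma_1^{n/(n-1)}\sigma_n^{-n/(n-1)}$, which dominates the dangerous $+\sigma_1$ term \emph{only when $\sigma_n=1/K$ is bounded above} — that is, only after one already knows $K\ge\epsilon_1$.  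This is precisely how the paper's Theorem~\ref{c2-sharp} is structured: it invokes Theorem~\ref{thm-lower-k} as an input, not the other way around.  Your weighted quantity cannot manufacture the missing coercivity.  With $\Phi=\operatorname{tr}(A)/(u-\epsilon_0/2)$ and $\alpha=1$, at a spatial maximum one finds (after the gradient cancellation you correctly identify, and using (\ref{sup-pde1}))
\[
0\;\le\;\operatorname{tr}(A)\Bigl[-\tfrac{\epsilon_0}{2}+\tfrac{\epsilon_0}{2}KH+(n+1)K\Bigr]+n(n-1)K\bigl(u-\tfrac{\epsilon_0}{2}\bigr).
\]
The coefficient of $\operatorname{tr}(A)$ contains $+\tfrac{\epsilon_0}{2}KH$ with the wrong sign, and $KH=\sigma_{n-1}/\sigma_n^{2}$ is \emph{not} a priori bounded above: if one eigenvalue of $A$ is huge while the others stay of order one, $\sigma_1\to\infty$ forces $\sigma_{n-1}\to\infty$ by Newton, so $KH\to\infty$ even with $K\le\overline K$.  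Then the bracketed coefficient is positive and the inequality is vacuous.  The alternative weight $(C_0-u)^{\alpha}$ runs into the same obstruction: after multiplying through, the linear term $C_0\operatorname{tr}(A)$ has no uniformly negative partner.  Nor does Hamilton's tensor maximum principle with the $\tfrac{n+1}{n}$-concavity help: the extra negative term $-\tfrac{n+1}{n}|\bar\nabla K|^2/K$ involves the gradient of $K$, not of $\operatorname{tr}(A)$, and is uncontrolled at a maximum of your auxiliary quantity.

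The paper proves the lower bound on $K$ by an entirely different mechanism.  It works in the un-normalized flow and applies Chow's differential Harnack estimate (Theorem~3.7 of \cite{Chow}), which under the Gauss-map parametrization gives the pointwise monotonicity $K(x,t)\,t^{n/(n+1)}\le K(x,t')\,t'^{n/(n+1)}$ for $t\le t'$.  Combined with the two-sided $C^{0}$ bound $\Lambda^{-1}(T-t)^{1/(n+1)}\le u(x,t)\le\Lambda(T-t)^{1/(n+1)}$ and the exact relation $u(x,t')=\int_{t'}^{T}K(x,t)\,dt$, a dyadic iteration over the time intervals $[t_j,t_{j+1}]$ with $T-t_j=\tfrac{T}{2}\alpha^j$ forces, on each interval, a time $s_j$ with $K(x,s_j)(T-s_j)^{n/(n+1)}\ge\tfrac{1}{4(n+1)\Lambda}$, and the Harnack monotonicity then propagates this to all of $[t_j,t_{j+1}]$ with only a fixed multiplicative loss.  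If you want a purely maximum-principle route you would need to import Chow's Harnack inequality (itself a maximum-principle result, but on a different quantity) rather than run the direct argument on $\operatorname{tr}(A)$.
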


\begin{proof} For this estimate, it is more convenient to work with the un-normalized flow (\ref{gauss2}). Let $T$ be the terminating time (which is $\frac{1}{n+1}$ by our normalization). Then the claimed estimate is equivalent to
\begin{equation}\label{equ1}
K(x, t) (T-t)^{\frac{n}{n+1}}\ge \epsilon_1.
\end{equation}
For the proof we recall Theorem 3.7 of \cite{Chow} under the Gauss map parametrization:
\begin{equation}\label{eq-chow}
K(x, t) t^{\frac{n}{n+1}}\le K(x, t') t'^{\frac{n}{n+1}}
\end{equation}
for any $0<t\le t'<T$. Since it is sufficient to prove (\ref{equ1}) for $t\ge \frac{T}{2}$, the estimate (\ref{eq-chow}) implies that
\begin{equation}\label{rel-mono}
K(x, t)\le 2^{\frac{n}{n+1}} K(x, t').
\end{equation}
The two sided $C^0$-estimate (\ref{C0-2sides}) implies that for the un-normalized support function $u(x, t)$ it satisfies that
\begin{equation}\label{equ-2sides}
\frac{1}{\Lambda} (T-t)^{\frac{1}{n+1}}\le u(x, t)\le \Lambda (T-t)^{\frac{1}{n+1}}.
\end{equation}
Let $$
\alpha =\left(\frac{1}{2\Lambda^2}\right)^{n+1}, \quad h_j=\frac{T}{2}\alpha^j, \mbox{ and } \quad t_j=T-h_j \mbox{ for } j=0, 1, \cdots.
$$
Clearly $t_j \to T$ as $j\to \infty$. The above estimate (\ref{equ-2sides}) implies that
\begin{eqnarray}
u(x, t_j)-u(x, t_{j+1}) &\ge& \frac{1}{\Lambda} h_j^{\frac{1}{n+1}}-\Lambda h_{j+1}^{\frac{1}{n+1}} \nonumber\\
&=&\frac{1}{\Lambda} \left(\frac{T}{2}\right)^{\frac{1}{n+1}}\alpha^{\frac{j}{n+1}}-\Lambda \left(\frac{T}{2}\right)^{\frac{1}{n+1}}\alpha^{\frac{j+1}{n+1}}\nonumber\\
&=& \frac{1}{2\Lambda} h_j^{\frac{1}{n+1}}. \label{pot-low}
\end{eqnarray}
The Gauss curvature flow equation implies that for any $t'<T$
$$
u(x, t')=\int_{t'}^T K(x, t)\, dt
$$which in turn implies that
\begin{equation}\label{ffcal}
u(x, t_j)-u(x, t_{j+1}) =\int_{t_j}^{t_{j+1}} K(x, t)\, dt.
\end{equation}
Now we claim that there exists $s_j\in [t_j, t_{j+1}]$ such that
\begin{equation}\label{low-seq}
K(x, s_j) (T-s_j)^{\frac{n}{n+1}}\ge \frac{1}{4(n+1)\Lambda}.
\end{equation}
Otherwise we would have that
\begin{eqnarray*}
\int_{t_j}^{t_{j+1}} K(x, t)\, dt &\le& \frac{1}{4(n+1)\Lambda} \int_{t_j}^{t_{j+1}} (T-t)^{-\frac{n}{n+1}}\, dt\\
&=&\frac{1}{4(n+1)\Lambda}\int_{h_{j+1}}^{h_j} \tau^{-\frac{n}{n+1}}\, d\tau\\
&\le& \frac{1}{4\Lambda} h_j^{\frac{1}{n+1}}.
\end{eqnarray*}
A contradiction to (\ref{pot-low}) and (\ref{ffcal}) !

Now the claimed estimate (\ref{equ1}) can be derived from (\ref{low-seq}) and (\ref{rel-mono}). First we claim that
\begin{equation}\label{low-seq2}
K(x, t_{j+1})(T-t_{j+1})^{\frac{n}{n+1}}\ge \frac{1}{4(n+1)\Lambda} \left(\frac{\alpha}{2}\right)^{\frac{n}{n+1}}.
\end{equation}
This can be proven via the estimates
\begin{eqnarray*}
K(x, t_{j+1})(T-t_{j+1})^{\frac{n}{n+1}}&\ge& \frac{1}{2^{\frac{n}{n+1}}}K(x, s_j)h_{j+1}^{\frac{n}{n+1}}\\
&=& \frac{1}{2^{\frac{n}{n+1}}} K(x, s_j) \alpha ^{\frac{n}{n+1}}h_j^{\frac{n}{n+1}}\\
&\ge&\left(\frac{\alpha}{2}\right)^{\frac{n}{n+1}}K(x, s_j)(T-s_j)^{\frac{n}{n+1}}
\end{eqnarray*}
and (\ref{low-seq}). The claimed estimate (\ref{equ1}) follows by another iteration of the above argument applying (\ref{low-seq2}) instead. Namely for $t\in [t_j, t_{j+1}]$, we have that
\begin{eqnarray*}
K(x, t) (T-t)^{\frac{n}{n+1}}&\ge& \frac{1}{2^{\frac{n}{n+1}}}K(x, t_j) (T-t)^{\frac{n}{n+1}}\\
&\ge&\frac{1}{2^{\frac{n}{n+1}}}K(x, t_j) h_{j+1}^{\frac{n}{n+1}}\\
&\ge& \left(\frac{\alpha}{2}\right)^{\frac{n}{n+1}} K(x, t_j)(T-t_j)^{\frac{n}{n+1}}.
\end{eqnarray*}
Hence we conclude that for any $t\in [t_1, T]$,
$$
K(x, t) (T-t)^{\frac{n}{n+1}}\ge \left(\frac{\alpha}{2}\right)^{\frac{2n}{n+1}}\frac{1}{4(n+1)\Lambda}.
$$
The claimed result follows from the above easily.
\end{proof}

The next result provides an upper bound on $\sigma_1(A)$, the trace of $A_{ij}=\bar{\nabla}_i \bar{\nabla}_j u+u\delta_{ij}$. Noting that $\det\left(\bar{\nabla}_i \bar{\nabla}_j u+u\delta_{ij}\right)=K^{-1}$, together they provide an upper estimate of $|\bar{\nabla}_i \bar{\nabla}_j u+u\delta_{ij}|$, hence $|\bar{\nabla}_i \bar{\nabla}_j u|$,  the Hessian of $u$.

\begin{theorem}\label{c2-sharp}
 Suppose $u(x, t)>0$ is the solution of (\ref{gauss-nor}) with initial data $u(0,x)=u_0(x)$, where $u_0(x)>0$ is the support function of $\Omega_0$ with $V(\Omega_0)=V(B(1))$.
There exists  a constant $C>0$, depending on $n, \Omega_0$ such that
\begin{equation}\label{upperc2}
\operatorname{trace}\left(\bar{\nabla}_i \bar{\nabla}_j u+u\delta_{ij}\right) \le C.
\end{equation}
Moreover the symmetric tensor $A$ has the lower estimate:
\begin{equation}\label{positivity}
\bar{\nabla}_i \bar{\nabla}_j u+u\bar{g}_{ij}\ge \frac{1}{C} \bar{g}_{ij}.
\end{equation}
\end{theorem}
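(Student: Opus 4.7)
My plan is to establish the upper bound (\ref{upperc2}) via the parabolic maximum principle applied to the evolution equation (\ref{eveq-A1}) for the symmetric tensor $A_{ij}$, and then to derive the lower bound (\ref{positivity}) as an essentially linear-algebraic consequence of (\ref{upperc2}) together with the two-sided bounds on $K$ supplied by Theorems \ref{C2-u} and \ref{thm-lower-k}. To sidestep multiplicity issues for the largest eigenvalue of $A$, I would work with $\operatorname{tr}(A)$; alternatively one can apply Hamilton's tensor maximum principle to $\lambda_{\max}(A)$ directly.

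Tracing (\ref{eveq-A1}) against $\bar g^{ij}$ gives
\[
\left(\frac{\partial}{\partial t}-\mathcal{L}\right)\operatorname{tr}(A)=-KH\operatorname{tr}(A)+\operatorname{tr}(A)+n(n-1)K+\bar g^{ij}\ddot\Psi_A(\bar\nabla_i A,\bar\nabla_j A).
\]
The key structural input is that $\Psi(A)=-1/\det(A)$ is concave on positive-definite symmetric matrices; writing $\Psi=-e^{-\log\det A}$ and using concavity of $\log\det$ gives $\ddot\Psi(X,X)=-e^{-\log\det A}(d\log\det A\,(X))^2+e^{-\log\det A}\,\ddot{\,\log\det A\,}(X,X)\le 0$ on symmetric $X$ (equivalently, the $-n$-concavity (\ref{n-concave}) combined with $\Psi<0$ yields the same sign). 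Hence the last term above is pointwise non-positive, and at any spatial maximum of $\operatorname{tr}(A)$ at time $t>0$ the parabolic maximum principle gives
\[
KH\operatorname{tr}(A)\le \operatorname{tr}(A)+n(n-1)K.
\]
To produce a super-linear lower bound on $KH\operatorname{tr}(A)$, I would exploit the determinant identity $\det A=1/K$: denoting the eigenvalues of $A$ by $\lambda_1\ge\cdots\ge\lambda_n$, the equality $\lambda_2\cdots\lambda_n=1/(K\lambda_1)$ together with AM--GM forces some $\lambda_i$ with $i\ge 2$ to satisfy $\lambda_i\le (K\lambda_1)^{-1/(n-1)}$, so
\[
H\ge \frac{1}{\lambda_i}\ge (K\lambda_1)^{1/(n-1)}\ge \left(\frac{K\operatorname{tr}(A)}{n}\right)^{1/(n-1)}.
\]
Substituting this and the uniform bounds $\epsilon_1\le K\le C$ into the previous inequality produces an estimate of the form $c_n\,\epsilon_1^{n/(n-1)}\operatorname{tr}(A)^{n/(n-1)}\le \operatorname{tr}(A)+C'$ at the maximum point, and the exponent $n/(n-1)>1$ forces $\operatorname{tr}(A)$ there to be bounded by a constant depending only on $n$ and $\Omega_0$. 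This proves (\ref{upperc2}).

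The estimate (\ref{positivity}) then follows by elementary linear algebra: (\ref{upperc2}) implies every eigenvalue of $A$ is at most $C$, while $\det A=1/K\ge 1/K_{\max}$ by Theorem \ref{C2-u}, so $\lambda_{\min}(A)\ge (\det A)/\prod_{j\ne i_{\min}}\lambda_j\ge 1/(K_{\max}C^{n-1})$, which is (\ref{positivity}). The main obstacle I anticipate is the clean execution of the maximum-principle step—either the bookkeeping needed so that the concavity term is legitimately absorbed, or, if one works with $\lambda_{\max}(A)$ directly, the use of Hamilton's trick to differentiate through a point of multiplicity. A subsidiary point worth flagging is that the positive lower bound on $K$ from Theorem \ref{thm-lower-k} is used in an essential way: without it the term $-KH\operatorname{tr}(A)$ would not be able to dominate the linear perturbation $+\operatorname{tr}(A)$ on the right-hand side.
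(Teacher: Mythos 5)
Your argument is correct and follows essentially the same route as the paper: trace the evolution equation for $A$, drop the $\ddot\Psi$ term by concavity, and bound $KH\operatorname{tr}(A)$ from below by a super-linear power $\operatorname{tr}(A)^{n/(n-1)}$ (the paper quotes Newton's inequality $\sigma_{n-1}/n\ge(\sigma_1/n)^{1/(n-1)}\sigma_n^{(n-2)/(n-1)}$ where you re-derive the equivalent bound via AM--GM), then invoke the two-sided bounds on $K$. The only cosmetic differences are that the paper invokes the $-n$-concavity (\ref{n-concave}) to retain, and then discard, an extra favorable gradient term, whereas you use plain concavity of $\Psi$ from the outset; both yield the same conclusion.
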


\begin{proof} We denote by $\sigma_i(A)$ (or simply $\sigma_i$)  the $i$-the symmetric function of the symmetric tensor $A_{ij}=\bar{\nabla}_i \bar{\nabla}_j u+u\delta_{ij}$. The previous result implies that $\sigma_n \ge \frac{1}{C_1}$, where $C_1$ is the positive constant from Theorem \ref{C2-u}. We  recall Newton's inequality (namely the function $\log \frac{\sigma_k}{C^k_n}$, with $C^k_n$ being the binomial coefficient,  is a concave function of $k$):
\begin{equation}\label{newton}
\frac{\sigma_{n-1}}{n}\ge \left(\frac{\sigma_1}{n}\right)^{\frac{1}{n-1}}\sigma_n ^{\frac{n-2}{n-1}}.
\end{equation}
The concavity of $\ddot \Psi$ together with (\ref{eveq-A1}) in Proposition \ref{matrix} implies that
\begin{equation}\label{trace1}
\left(\frac{\partial}{\partial t} -\mathcal{L}\right) \sigma_1 \le -\frac{\sigma_1 \sigma_{n-1}}{\sigma_n^2}+\sigma_1 +\frac{n(n-1)}{\sigma_n}-\frac{n+1}{n}\frac{|\bar{\nabla}K|^2}{K}.
\end{equation}
Let $m(t)\doteqdot \max_{x\in \bfS^n}\sigma_1(x, t)$. Then at $(x_0, t)$, where $m(t)$ is achieved we have that
\begin{eqnarray*}
0&\le& -\frac{\sigma_1 \sigma_{n-1}}{\sigma_n^2}+\sigma_1 +\frac{n(n-1)}{\sigma_n}\\
&\le& -n^{\frac{n-2}{n-1}}\frac{\sigma_1^{\frac{n}{n-1}}}{\sigma_n^{\frac{n}{n-1}}}+\sigma_1+n(n-1)C_1\\
&\le& -C_2\sigma_1^{\frac{n}{n-1}}+\sigma_1+C_1'.
\end{eqnarray*}
Here in the second last inequality we applied (\ref{newton}) and the upper estimate of $K(x,t)$, and in the last inequality we applied the lower estimate of $K(x, t)$ established in Theorem \ref{C2-u}. The claimed result (\ref{upperc2}) follows from the application of the maximum principle to the above estimate. The estimate (\ref{positivity}) follows from
Theorem \ref{C2-u} and (\ref{upperc2}).
\end{proof}

Combining Corollary \ref{rho-e}, Theorem \ref{mono1}, Theorem  \ref{C0}, Theorem \ref{C2-u} and Theorem \ref{c2-sharp}, as well as the gradient estimate (\ref{gradient}), we conclude that there exists a positive constant $C$ depending only on the initial data such that for the unique positive solution to (\ref{gauss-nor})
\begin{equation}\label{C2-norm}
\|u(\cdot, t)\|_{C^2(\mathbb S^n)} \le C.
\end{equation}

Since (\ref{gauss-nor}) is a concave parabolic equation, by   Krylov's theorem \cite{kry} and the standard theory on the parabolic  equations, estimates (\ref{C2-norm}) and (\ref{positivity})  imply the bounds on all derivatives (space and time) of $u(x, t)$. More precisely, for any $k\ge 3$, there exists $C_k\ge 0$, depending only on the initial value such that for $t\ge 1$
\begin{equation}\label{Ck}
\|u(\cdot, t)\|_{C^k(\mathbb S^n)}\le C_k.
\end{equation}
Now for any $T>0$ and sequence $\{t_j\}\to \infty$, consider $u_j(x, t)\doteqdot u(x, t-t_j)$. We have the following result on the sequential convergence.

\begin{proposition}\label{s-con} After passing to a subsequence, on $\bfS^n \times [-T, T]$, $\{u_j\}$ converges in the $C^\infty$-topology to a smooth function $u_\infty(x)$ which is a  self-similar solution to (\ref{equi}).
\end{proposition}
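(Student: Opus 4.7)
The plan is to extract a $C^\infty$ subsequential limit of $\{u_j\}$ using the uniform estimates already in hand, and then use the strict monotonicity of the entropy to force that limit to be a time-independent soliton whose entropy point coincides with the origin.

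First I would apply compactness. The uniform bounds in (\ref{Ck}), combined with the two-sided $C^0$ bound from Corollary \ref{C0-sum} and the uniform positivity $A_{u_j}\ge \tfrac{1}{C}\bar g$ from Theorem \ref{c2-sharp}, guarantee that $\{u_j\}$ is bounded in $C^k(\bfS^n\times[-T,T])$ for every $k\ge 0$. A diagonal Arzel\`a--Ascoli argument extracts a subsequence, still denoted $\{u_j\}$, converging in the $C^\infty$ topology on $\bfS^n\times[-T,T]$ to a smooth limit $u_\infty(x,t)$. Passing to the limit in (\ref{gauss-nor}), the function $u_\infty$ is a smooth positive solution of the normalized flow with $A_{u_\infty}>0$; let $\Omega^\infty_t$ denote the corresponding convex body.

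Second, I would show that $\mathcal{E}(\Omega^\infty_t)$ is constant on $[-T,T]$. Theorem \ref{mono1} and Proposition \ref{key} give that $\mathcal{E}(\Omega_t)$ is monotone non-increasing in $t$ and bounded below by $0$, so it decreases to some $\mathcal{E}_\infty$ as $t\to\infty$. The bodies $\Omega_{t_j+t}$ all lie in a common class $\Gamma^A_{V(B(1))}$ (volume is preserved and entropy is bounded above by the initial value), so the continuity of $\mathcal{E}$ in Lemma \ref{e-cont} yields $\mathcal{E}(\Omega^\infty_t)=\lim_j\mathcal{E}(\Omega_{t_j+t})=\mathcal{E}_\infty$ for every $t\in[-T,T]$.

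Third, I would run the differential identity behind Theorem \ref{mono1}. Fix $t_0\in(-T,T)$, let $e_0\in \operatorname{Int}(\Omega^\infty_{t_0})$ be the entropy point of $\Omega^\infty_{t_0}$, and define $w(x,t)=u_\infty(x,t)-e^{t-t_0}\langle e_0,x\rangle$, which by Proposition \ref{uniqueness-pos} is again a solution of (\ref{gauss-nor}) and is positive near $t_0$. The same computation as in Theorem \ref{mono1} gives
$$
\left.\frac{d}{dt}\aint_{\bfS^n}\log w(x,t)\right|_{t=t_0}=-\aint_{\bfS^n}\left(\sqrt{\tfrac{K}{w}}-\sqrt{\tfrac{w}{K}}\right)^2\le 0,
$$
while $\aint\log w(\cdot,t)\le \mathcal{E}(\Omega^\infty_t)\equiv\mathcal{E}_\infty$ with equality at $t=t_0$ forces this derivative to be $\ge 0$ at $t_0$. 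Both sides therefore vanish, giving $K\equiv w(\cdot,t_0)$ on $\bfS^n$, i.e., $u_\infty(\cdot,t_0)$ satisfies the soliton equation (\ref{equi}) relative to $e_0$.

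To conclude, I would identify $e_0$ with the origin. The combination of (\ref{entropy-to-0}), Lemma \ref{cov-e}, and Lemma \ref{e-concave} shows that the defect $\mathcal{E}(\Omega_{t_j+t_0})-\aint\log u(\cdot,t_j+t_0)\to 0$ bounds $|e(\Omega_{t_j+t_0})|^2$ from above, so $e(\Omega_{t_j+t_0})\to 0$; continuity of the entropy point (another consequence of Lemma \ref{e-cont} together with uniqueness in Lemma \ref{entropy-ex}) then gives $e_0=0$. Thus $w(\cdot,t_0)=u_\infty(\cdot,t_0)$ itself satisfies (\ref{equi}), and plugging this into (\ref{gauss-nor}) yields $\partial_t u_\infty=u_\infty-K\equiv 0$, so $u_\infty$ is independent of $t$. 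The main obstacle is precisely this last identification of the entropy point as the origin in the limit, which is the reason the quantitative estimate (\ref{entropy-to-0}) was recorded in the proof of Theorem \ref{C0}; everything else amounts to routine compactness and passage to the limit in the flow equation and the entropy monotonicity identity.
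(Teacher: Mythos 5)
Your proposal is correct, but it takes a more roundabout route than the paper. The paper's proof sidesteps the entropy point entirely: from the proof of Theorem \ref{C0} one already has the two-sided squeeze $\mathcal{E}_\infty\le\aint_{\mathbb{S}^n}\log u(\cdot,t)\le\mathcal{E}(\Omega_t)\to\mathcal{E}_\infty$ (the left inequality being Lemma \ref{cov-e}), so $\aint_{\mathbb{S}^n}\log u_\infty(\cdot,t)\equiv\mathcal{E}_\infty$. Applying the Firey-entropy derivative identity $\frac{d}{dt}\aint\log u_\infty=1-\aint\frac{K}{u_\infty}=-\aint\big(\sqrt{K/u_\infty}-\sqrt{u_\infty/K}\big)^2$ \emph{directly to $u_\infty$} (the support function relative to the origin, which is positive by Theorem \ref{C0}) then forces $K\equiv u_\infty$, and hence $(u_\infty)_t=u_\infty-K=0$, all in one step. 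Your version instead first shows that $u_\infty(\cdot,t_0)$ is a soliton relative to its own entropy point $e_0$ (by running the Theorem \ref{mono1} computation on the translated solution $w$ and combining with the constancy of $\mathcal{E}(\Omega^\infty_t)$ from Lemma \ref{e-cont}), and then separately pins down $e_0=0$ via (\ref{entropy-to-0}) plus continuity of the entropy point. Both steps are sound — the local-max argument for $w$ at interior $t_0$ and the identification $e_0=0$ each hold up — but the decomposition into ``soliton about some center'' followed by ``center is the origin'' duplicates work: the quantitative estimate (\ref{entropy-to-0}) that you invoke for the last step already encodes the very fact (that $\mathcal{E}(\Omega_t)-\aint\log u(\cdot,t)\to0$) which makes the paper's direct sandwich immediate. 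The upshot is that the paper's argument is shorter and needs neither Lemma \ref{e-cont} nor Lemma \ref{e-concave} in this proof, while your version makes the soliton-center structure explicit, which may be illuminating but is not necessary here.
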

\begin{proof} By the proof of Theorem \ref{C0}, we have that for $t\in [-T, T]$,
$$
\lim_{j\to \infty} \frac{1}{\omega_n} \int_{\bfS^n} \log u_j(x, t)\, d\theta(x) \to \mathcal{E}_\infty.
$$
Hence $u_\infty(x, t)$ satisfies
$$
\frac{1}{\omega_n} \int_{\bfS^n} \log u_\infty(x, t)\, d\theta(x) =\mathcal{E}_\infty.
$$
$u_\infty$ is also a solution to (\ref{gauss-nor}) and positive by Theorem \ref{C0}. Hence by the proof of Theorem \ref{mono1} we conclude that
$$
\frac{u_\infty(x, t)}{K(x, t)}=\frac{K(x, t)}{u_\infty(x, t)}
$$
which implies that $(u_\infty)_{t}(x, t)=0$. Hence we have the claimed result.
\end{proof}

\section{Uniform convergence and the stability of the solitons}

Combining Theorem 2 of \cite{Andrews-IMRN} with Proposition \ref{s-con} we have the following result.

\begin{theorem}\label{c-infty}
The normalized GCF (\ref{gauss-nor}) converges in $C^\infty$-topology to a smooth soliton $u_\infty$ ($M_\infty$) which satisfies that $K(x)>0$ and the soliton equation:
$$
u\det (u\operatorname{id} +\bar{\nabla}^2 u)=1.
$$
\end{theorem}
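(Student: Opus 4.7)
The plan is to combine the sequential convergence result of Proposition \ref{s-con} with the uniform estimates established in the previous sections and then invoke Andrews' $C^\infty$-convergence theorem to upgrade subsequential convergence to full convergence of the flow.

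First I would package the regularity estimates. The bounds (\ref{C2-norm}) and (\ref{Ck}), together with the uniform positivity (\ref{positivity}) of $A_u = \bar\nabla^2 u + u\,\bar g$, show that along the flow $u(\cdot,t)$ stays in a compact family of smooth, uniformly convex support functions whose associated hypersurfaces $M_t$ have uniformly positive and uniformly bounded Gauss curvature. In particular any subsequential limit produced by Proposition \ref{s-con} is automatically a smooth, strictly convex hypersurface, and passing to the limit in (\ref{gauss-nor}) together with $\partial_t u_\infty \equiv 0$ shows that the limit satisfies the soliton equation
\begin{equation*}
u_\infty \det\bigl(u_\infty \operatorname{id} + \bar\nabla^2 u_\infty\bigr) = 1, \qquad K_\infty(x) = u_\infty(x) > 0.
\end{equation*}

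Next I would address the issue that Proposition \ref{s-con} only gives subsequential limits, which a priori could depend on the subsequence. This is precisely the setting handled by Theorem 2 of \cite{Andrews-IMRN}: under the assumption that the flow possesses uniform regularity estimates (which we now have) and that every subsequential limit is a smooth strictly convex soliton of (\ref{gauss-nor}), the entire flow converges in $C^\infty$-topology to a unique such soliton as $t \to \infty$. Applying this theorem therefore promotes the sequential convergence of Proposition \ref{s-con} to genuine $t\to\infty$ convergence in $C^\infty$, and identifies the limit as a fixed smooth strictly convex soliton $M_\infty$ satisfying the stated equation.

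The main technical point to verify is that the hypotheses of Andrews' theorem are indeed met in our setting, namely: uniform two-sided $C^0$ bounds on $u$ (Corollary \ref{C0-sum}), uniform two-sided bounds on $K$ (Theorems \ref{C2-u} and \ref{thm-lower-k}), uniform $C^2$ bounds and strict convexity of $A_u$ (Theorem \ref{c2-sharp}), and the higher-order bounds (\ref{Ck}) coming from Krylov's theorem. Given these, the only remaining step -- that every subsequential limit is a smooth strictly convex soliton -- is exactly Proposition \ref{s-con}. The main obstacle therefore is not any new analytic estimate but confirming that our estimates satisfy the precise quantitative hypotheses of Andrews' convergence statement; once that is done, the theorem follows immediately.
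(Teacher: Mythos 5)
Your proposal follows essentially the same route as the paper: the paper's entire proof is the single sentence that Theorem \ref{c-infty} follows by combining Theorem 2 of \cite{Andrews-IMRN} with Proposition \ref{s-con}, and your argument fleshes out exactly that combination, including the check that the uniform $C^0$, curvature, $C^2$, and higher-order estimates supply the regularity hypotheses Andrews' theorem needs. No gap, and no genuinely different approach — just a more explicit unpacking of the same two ingredients.
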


It remains an interesting question to see if the round sphere (ball) is the unique compact soliton. For this sake we
 consider the following functional for $u>0$ with $A_u$ being positive definite
$$
\mathcal{J}_1(u)\doteqdot \aint_{\mathbb{S}^n} \log u -\frac{1}{n+1}\log \left(\aint_{\mathbb{S}^n} u\det(A_u)\right)+\frac{1}{2}\left(\aint_{\mathbb{S}^n}u\det(A_u)-1\right)^2.
$$
Here $\aint_{\mathbb{S}^n}=\frac{1}{\omega_n}\int_{\mathbb{S}^n}$.
If $v=u+\eta \rho$ is a variation, then
\begin{eqnarray*}
\left.\frac{d}{d\eta} \mathcal{J}_1(v)\right|_{\eta=0}&=&\aint_{\mathbb{S}^n} \frac{\rho}{u}-\frac{\aint_{\mathbb{S}^n}\rho \det(A_u)}{\aint_{\mathbb{S}^n}u\det(A_u)}+(n+1)\left(\aint_{\mathbb{S}^n}u\det(A_u)-1\right)\aint_{\mathbb{S}^n}\rho \det(A_u).
\end{eqnarray*}
Here we have used that
$$
\int u \sigma_n^{ij}(A) (A_\rho)_{ij} =\int \rho \sigma_n^{ij}(A) (A_u)_{ij}=n\int \rho \det(A_u)
$$
with $\sigma_n^{ij}(A)$ denotes the cofactor of $A_{ij}$ in $\det(A)$, which can also be expressed as $KW^{ij}$ with $(W^{ij})$ being the Weingarten map.
Hence the Euler-Lagrange equation of $\mathcal{J}_1(u)$ is
\begin{equation}\label{el-J1}
0=\frac{1}{u}-\frac{\det(A_u)}{\aint_{\mathbb{S}^n}u\det(A_u)}+(n+1)\left(\aint_{\mathbb{S}^n}u\det(A_u)-1\right)\det(A_u).
\end{equation}
Multiplying $u$ on the both sides of (\ref{el-J1}) and integrate on $\mathbb{S}^n$ we have that
$$
\aint_{\mathbb{S}^n} (u\det(A_u)-1)\, dx=0.
$$
This together with (\ref{el-J1}) implies that $u=\frac{1}{\det(A_u)}$. Hence we have the following proposition.

\begin{proposition}\label{soliton=el-J1} The critical point of functional $\mathcal{J}_1(u)$ among all positive smooth functions $u$ with $A_u>0$  satisfies the soliton equation $u=K$.
\end{proposition}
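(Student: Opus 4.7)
The plan is to extract the soliton equation from the Euler--Lagrange equation by a single clever test against $u$ itself. The cubic penalty $\tfrac12 (\aint u\det(A_u) - 1)^2$ appears to be rigged precisely to enforce the volume normalization $\aint u\det(A_u)=1$ as a necessary condition at critical points, after which the Euler--Lagrange equation should collapse to a pointwise identity.

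First I would write down the Euler--Lagrange equation by taking a variation $v = u + \eta\rho$ with $\rho$ an arbitrary smooth test function and $\eta$ small. Two linear-algebraic ingredients carry the calculation. The first is $\tfrac{d}{d\eta}|_{\eta=0}\det(A_v) = \sigma_n^{ij}(A_u)(A_\rho)_{ij}$, where $\sigma_n^{ij}$ is the cofactor. The second is the self-adjointness identity $\int u\,\sigma_n^{ij}(A_u)(A_\rho)_{ij}\,d\theta = \int \rho\,\sigma_n^{ij}(A_u)(A_u)_{ij}\,d\theta = n\int \rho\det(A_u)\,d\theta$, which combines the divergence-free property of the Newton tensor on $\mathbb{S}^n$ with Euler homogeneity of $\det$ as a degree-$n$ polynomial in $A_{ij}$. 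Assembling the three contributions to the first variation and invoking the arbitrariness of $\rho$ yields the pointwise Euler--Lagrange equation \eqref{el-J1}.

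Next I would test \eqref{el-J1} against $u$ itself (equivalently, multiply through by $u$ and integrate over $\mathbb{S}^n$). Writing $I \doteqdot \aint_{\mathbb{S}^n} u\det(A_u)$, the first two terms contribute $1 - I/I = 0$, leaving $(n+1)(I-1)\, I = 0$. Since $u>0$ and $A_u>0$ force $I>0$, this gives the normalization $I = 1$. Substituting $I=1$ back into \eqref{el-J1} annihilates the cubic term, leaving the pointwise identity $1/u = \det(A_u)$ on $\mathbb{S}^n$, which by \eqref{2nd} is exactly the soliton equation $u = K$.

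The only non-routine step is the self-adjointness identity used in the first variation; the rest is bookkeeping and the usual Lagrange-multiplier trick applied to the built-in penalty term. I do not anticipate any real obstacle.
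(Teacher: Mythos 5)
Your proposal is correct and follows essentially the same route as the paper: derive the Euler--Lagrange equation via the self-adjointness/Newton-tensor identity, test against $u$ to force $\aint u\det(A_u)=1$, then substitute back to kill the penalty term and read off $u\det(A_u)=1$. The only cosmetic difference is that you spell out the positivity argument $I>0$ (from $u>0$, $A_u>0$) explicitly, whereas the paper leaves it implicit.
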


Similarly we can compute the second variation of the functional $\mathcal{J}_1$:
\begin{eqnarray*}
\left.\frac{d^2}{d\eta^2} \mathcal{J}_1(v_\eta)\right|_{\eta=0}&=& -\aint_{\mathbb{S}^n} \frac{\eta^2}{u^2}-\frac{\aint \eta \sigma_n^{ij}(\eta_{ij}+\eta \delta_{ij})}{\aint_{\mathbb{S}^n} u \det(A_u)}+(n+1)\left(\frac{\aint_{\mathbb{S}^n} \eta \det(A_u)}{\aint_{\mathbb{S}^n} u\det(A_u)}\right)^2\\
&\quad& (n+1)^2\left(\aint_{\mathbb{S}^n} \eta \det(A_u)\right)^2.
\end{eqnarray*}
Hence if $u\equiv 1$, making use that it is a critical point with $\aint u\det(A_u)=1$ we deduce that
\begin{eqnarray*}
\left.\frac{d^2}{d\eta^2} \mathcal{J}_1(v_\eta)\right|_{\eta=0}&=&-\aint_{\mathbb{S}^n} \eta^2 -\aint_{\mathbb{S}^n} \eta (\bar{\Delta} \eta +n \eta)+(n+1)(n+2)\left(\aint_{\mathbb{S}^n}\eta\right)^2\\
&=&\aint_{\mathbb{S}^n} |\bar{\nabla}\eta|^2 -(n+1)\eta^2 +(n+1)(n+2)\left(\aint_{\mathbb{S}^n}\eta\right)^2.
\end{eqnarray*}
This computation, together with the spectra of the sphere,  proves the following stability result.
\begin{proposition}\label{stable} The unit sphere/ball, namely the soliton with
$u\equiv 1$,  is stable among the variations $v_\eta=u+\eta$ with $\eta \perp \operatorname{span} \{ 1, x_1, \cdots, x_{n+1}\}$.
\end{proposition}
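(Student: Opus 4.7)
The plan is to apply the spectral decomposition of the Laplacian $\bar{\Delta}$ on the round sphere $\mathbb{S}^n$ to the second variation formula already derived in the excerpt, namely
$$
\left.\frac{d^{2}}{d\eta^{2}}\mathcal{J}_1(v_\eta)\right|_{\eta=0}=\aint_{\mathbb{S}^n}|\bar{\nabla}\eta|^{2}-(n+1)\aint_{\mathbb{S}^n}\eta^{2}+(n+1)(n+2)\Bigl(\aint_{\mathbb{S}^n}\eta\Bigr)^{2}.
$$
Recall that the eigenvalues of $-\bar{\Delta}$ on $\mathbb{S}^{n}$ are $\lambda_k=k(k+n-1)$ for $k=0,1,2,\ldots$, with $\mathcal{H}_0$ consisting of constants and $\mathcal{H}_1=\operatorname{span}\{x_1,\ldots,x_{n+1}\}$. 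The hypothesis $\eta\perp\operatorname{span}\{1,x_1,\ldots,x_{n+1}\}$ is exactly the statement that $\eta$ has no $k=0$ or $k=1$ component.

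First, the orthogonality to the constant $1$ gives $\aint_{\mathbb{S}^n}\eta=0$, which kills the last term in the second variation. Writing the spherical harmonic decomposition $\eta=\sum_{k\geq 2}\eta_k$ with $\eta_k\in\mathcal{H}_k$, Parseval's identity yields
$$
\aint_{\mathbb{S}^n}|\bar{\nabla}\eta|^{2}=\sum_{k\geq 2}k(k+n-1)\aint_{\mathbb{S}^n}\eta_k^{2},\qquad\aint_{\mathbb{S}^n}\eta^{2}=\sum_{k\geq 2}\aint_{\mathbb{S}^n}\eta_k^{2}.
$$
Substituting,
$$
\left.\frac{d^{2}}{d\eta^{2}}\mathcal{J}_1(v_\eta)\right|_{\eta=0}=\sum_{k\geq 2}\bigl[k(k+n-1)-(n+1)\bigr]\aint_{\mathbb{S}^n}\eta_k^{2}.
$$
For $k=2$ the bracket equals $2(n+1)-(n+1)=n+1>0$, and it is strictly increasing in $k$ for $k\geq 2$, so every term in the sum is nonnegative. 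Consequently the second variation is nonnegative, and strictly positive whenever $\eta\not\equiv 0$, which is precisely the stability claim.

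There is no real obstacle here: the hard computation (the explicit second variation) has already been carried out, and all that remains is a clean eigenvalue comparison on $\mathbb{S}^n$. The only conceptual point worth highlighting is why the exclusion of $\operatorname{span}\{1,x_1,\ldots,x_{n+1}\}$ is the natural notion of \emph{admissible}: the constant mode $1$ is an infinitesimal dilation, which is incompatible with the volume normalization at the linearized level (and is absorbed by the penalization term in $\mathcal{J}_1$ only at second order), while the linear modes $x_j$ correspond to translations of $\Omega$, which by formula (\ref{1st}) merely shift the reference point of the support function and therefore leave the underlying convex body (and hence the soliton equation) unchanged. These are precisely the directions along which $\mathcal{J}_1$ must be degenerate, and Proposition \ref{stable} asserts that once they are removed, the Hessian at the round sphere is strictly positive.
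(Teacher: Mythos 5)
Your proof is correct and follows exactly the route the paper intends: the paper derives the same second-variation formula and then simply remarks that ``this computation, together with the spectra of the sphere, proves the following stability result,'' leaving the eigenvalue comparison implicit. You have merely supplied the details — decomposing $\eta$ into spherical harmonics, noting that orthogonality to constants and linear functions removes the $k=0,1$ modes and the $(\aint\eta)^2$ term, and checking that $\lambda_k - (n+1) \ge \lambda_2 - (n+1) = n+1 > 0$ for $k\ge 2$ — which is precisely what the paper's one-line remark is gesturing at.
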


\bigskip

\noindent {\it Acknowledgements.} The first author would like to thank Xiuxiong Chen for useful discussions in 2001. Both authors would like to thank Ben Andrews, Ben Chow, Toti Daskalopoulos,  Richard Hamilton and Deane Yang for their interests.

\bibliographystyle{amsalpha}

\end{document}